\tiny\color{gray},
\newtheorem{lemma}{Lemma}[section]
\newtheorem{theorem}[lemma]{Theorem}
\newtheorem{proposition}[lemma]{Proposition}
\newtheorem{question}{Question}
\newtheorem{corollary}[lemma]{Corollary}
\theoremstyle{definition}
\newtheorem{definition}[lemma]{Definition}
\theoremstyle{remark}
\newtheorem{remark}[lemma]{Remark}
\newtheorem{notation}[lemma]{Notation}
\definecolor{TianyiBlue}{RGB}{102,204,255}
\NewDocumentCommand{\drawSequence}{mmmm O{} O{}}{%
    \def\radius{#1}%
    \def\xspacing{#2}%
    \def\yspacing{#3}%
    \def\sequence{#4}%
    \begingroup
      \def\seqReverse{}%
      \newcount\itemCount
      \itemCount=0
      \foreach \val in \sequence {
          \xdef\seqReverse{\val,\seqReverse}%
          \global\advance\itemCount by 1
      }
    \endgroup
    \begingroup
      \def\lastNegative{}%
      \foreach \v in \seqReverse {%
        \ifx\lastNegative\empty
          \ifnum\v<0
            \xdef\lastNegative{\v}%
          \fi
        \fi
      }%
      \newcount\c
      \ifx\lastNegative\empty
        \c=0
      \else
        \pgfmathtruncatemacro{\tempC}{-1*\lastNegative}%
        \global\c=\tempC
      \fi
      \newcount\counter
      \counter=0
      \def\a{}%
      \def\b{}%
      \foreach \v in \seqReverse {%
          \global\advance\counter by 1
          \ifnum\counter=\numexpr\c+3\relax
            \xdef\a{\v}%
          \fi
          \ifnum\counter=\numexpr\c+2\relax
            \xdef\b{\v}%
          \fi
      }%
    \endgroup
    \newcount\countA
    \newcount\countB
    \begingroup
      \ifx\a\empty
        \global\countA=9999
      \else
        \pgfmathtruncatemacro{\valA}{\a}%
        \global\countA=\valA
      \fi
      \ifx\b\empty
        \global\countB=9999
      \else
        \pgfmathtruncatemacro{\valB}{\b}%
        \global\countB=\valB
      \fi
    \endgroup
    \newcount\rowIndex
    \global\rowIndex=1
    \begin{figure}[hbpt]
      \centering
      \begin{tikzpicture}
        \pgfmathsetmacro{\currenty}{0}
        \pgfmathsetmacro{\prevx}{0}
        \global\def\shouldDrawLine{0}
        \foreach \s in \sequence {
          \ifnum\s<0
            \pgfmathsetmacro{\currentx}{\prevx + \xspacing}
            \ifnum\rowIndex<\countA
              \node[color=black, scale=\radius*12] at (\currentx, \currenty) {\the\numexpr-\s};
            \else
              \ifnum\rowIndex<\countB
                \node[color=black, scale=\radius*12] at (\currentx, \currenty) {\the\numexpr-\s};
              \else
                \node[color=black, scale=\radius*12] at (\currentx, \currenty) {\the\numexpr-\s};
              \fi
            \fi
            \pgfmathsetmacro{\currenty}{\currenty - \yspacing}
            \global\let\currenty\currenty
            \global\def\shouldDrawLine{0}
            \global\advance\rowIndex by1
          \else
            \pgfmathsetmacro{\currentx}{\s*\xspacing}
            \coordinate (current) at (\currentx, \currenty);
            \ifnum\rowIndex<\countA
              \draw[color=black] (current) circle [radius=\radius];
            \else
              \ifnum\rowIndex<\countB
                \draw[color=black] (current) circle [radius=\radius];
              \else
                \draw[color=black] (current) circle [radius=\radius];
              \fi
            \fi
            \ifnum\shouldDrawLine=1
              \ifnum\rowIndex<\countA
                \draw[color=black] ({\prevx + \radius}, \currenty) -- ({\currentx - \radius}, \currenty);
              \else
                \ifnum\rowIndex<\countB
                  \draw[color=black] ({\prevx + \radius}, \currenty) -- ({\currentx - \radius}, \currenty);
                \else
                  \draw[color=black] ({\prevx + \radius}, \currenty) -- ({\currentx - \radius}, \currenty);
                \fi
              \fi
            \fi
            \global\let\prevx\currentx
            \global\def\shouldDrawLine{1}
          \fi
        }
      \end{tikzpicture}
      \caption{#5}
      \IfValueT{#6}{\label{#6}}
    \end{figure}
}
\NewDocumentCommand{\drawColoredSequence}{mmmm O{} O{}}{%
    \def\radius{#1}%
    \def\xspacing{#2}%
    \def\yspacing{#3}%
    \def\sequence{#4}%
    \begingroup
      \def\seqReverse{}%
      \newcount\itemCount
      \itemCount=0
      \foreach \val in \sequence {
          \xdef\seqReverse{\val,\seqReverse}%
          \global\advance\itemCount by 1
      }
    \endgroup
    \begingroup
      \def\lastNegative{}%
      \foreach \v in \seqReverse {%
        \ifx\lastNegative\empty
          \ifnum\v<0
            \xdef\lastNegative{\v}%
          \fi
        \fi
      }%
      \newcount\c
      \ifx\lastNegative\empty
        \c=0
      \else
        \pgfmathtruncatemacro{\tempC}{-1*\lastNegative}%
        \global\c=\tempC
      \fi
      \newcount\counter
      \counter=0
      \def\a{}%
      \def\b{}%
      \foreach \v in \seqReverse {%
          \global\advance\counter by 1
          \ifnum\counter=\numexpr\c+3\relax
            \xdef\a{\v}%
          \fi
          \ifnum\counter=\numexpr\c+2\relax
            \xdef\b{\v}%
          \fi
      }%
    \endgroup
    \newcount\countA
    \newcount\countB
    \begingroup
      \ifx\a\empty
        \global\countA=9999
      \else
        \pgfmathtruncatemacro{\valA}{\a}%
        \global\countA=\valA
      \fi
      \ifx\b\empty
        \global\countB=9999
      \else
        \pgfmathtruncatemacro{\valB}{\b}%
        \global\countB=\valB
      \fi
    \endgroup
    \newcount\rowIndex
    \global\rowIndex=1
    \begin{figure}[htpb]
      \centering
      \begin{tikzpicture}
        \pgfmathsetmacro{\currenty}{0}
        \pgfmathsetmacro{\prevx}{0}
        \global\def\shouldDrawLine{0}
        \foreach \s in \sequence {
          \ifnum\s<0
            \pgfmathsetmacro{\currentx}{\prevx + \xspacing}
            \ifnum\rowIndex<\countA
              \node[color=black, scale=\radius*12] at (\currentx, \currenty) {\the\numexpr-\s};
            \else
              \ifnum\rowIndex<\countB
                \node[color=TianyiBlue, scale=\radius*12] at (\currentx, \currenty) {\the\numexpr-\s};
              \else
                \node[color=black, scale=\radius*12] at (\currentx, \currenty) {\the\numexpr-\s};
              \fi
            \fi
            \pgfmathsetmacro{\currenty}{\currenty - \yspacing}
            \global\let\currenty\currenty
            \global\def\shouldDrawLine{0}
            \global\advance\rowIndex by1
          \else
            \pgfmathsetmacro{\currentx}{\s*\xspacing}
            \coordinate (current) at (\currentx, \currenty);
            \ifnum\rowIndex<\countA
              \draw[color=black] (current) circle [radius=\radius];
            \else
              \ifnum\rowIndex<\countB
                \draw[color=TianyiBlue] (current) circle [radius=\radius];
              \else
                \draw[color=black] (current) circle [radius=\radius];
              \fi
            \fi
            \ifnum\shouldDrawLine=1
              \ifnum\rowIndex<\countA
                \draw[color=black] ({\prevx + \radius}, \currenty) -- ({\currentx - \radius}, \currenty);
              \else
                \ifnum\rowIndex<\countB
                  \draw[color=TianyiBlue] ({\prevx + \radius}, \currenty) -- ({\currentx - \radius}, \currenty);
                \else
                  \draw[color=black] ({\prevx + \radius}, \currenty) -- ({\currentx - \radius}, \currenty);
                \fi
              \fi
            \fi
            \global\let\prevx\currentx
            \global\def\shouldDrawLine{1}
          \fi
        }
      \end{tikzpicture}
      \caption{#5}
      \IfValueT{#6}{\label{#6}}
    \end{figure}
}
\NewDocumentCommand{\drawSimplePattern}{mmmm O{} O{}}{%
    \def\radius{#1}%
    \def\xspacing{#2}%
    \def\yspacing{#3}%
    \def\sequence{#4}%
    \begin{figure}[htpb]
      \centering
      \begin{tikzpicture}
        \pgfmathsetmacro{\currenty}{0}%
        \global\def\shouldDrawLine{0}%
        \foreach \s in \sequence {
          \ifnum\s<0
            \pgfmathsetmacro{\currenty}{\currenty - \yspacing}%
            \global\let\currenty\currenty
            \global\def\shouldDrawLine{0}%
          \else
            \pgfmathsetmacro{\currentx}{\s*\xspacing}%
            \coordinate (currentcoord) at (\currentx, \currenty);
            \draw (currentcoord) circle [radius=\radius];
            \ifnum\shouldDrawLine=1
              \draw ({\prevx + \radius}, \currenty) -- ({\currentx - \radius}, \currenty);
            \fi
            \global\let\prevx\currentx
            \global\def\shouldDrawLine{1}%
          \fi
        }
      \end{tikzpicture}%
      \caption{#5}
      \IfValueT{#6}{\label{#6}}
    \end{figure}
}
\NewDocumentCommand{\drawSimplePatternFigure}{mmmm}{%
    \def\radius{#1}%
    \def\xspacing{#2}%
    \def\yspacing{#3}%
    \def\sequence{#4}%
    \begin{tikzpicture}
      \pgfmathsetmacro{\currenty}{0}%
      \global\def\shouldDrawLine{0}%
      \foreach \s in \sequence {
        \ifnum\s<0
          \pgfmathsetmacro{\currenty}{\currenty - \yspacing}%
          \global\let\currenty\currenty
          \global\def\shouldDrawLine{0}%
        \else
          \pgfmathsetmacro{\currentx}{\s*\xspacing}%
          \coordinate (currentcoord) at (\currentx, \currenty);
          \draw (currentcoord) circle [radius=\radius];
          \ifnum\shouldDrawLine=1
            \draw ({\prevx + \radius}, \currenty) -- ({\currentx - \radius}, \currenty);
          \fi
          \global\let\prevx\currentx
          \global\def\shouldDrawLine{1}%
        \fi
      }
    \end{tikzpicture}%
}
\DeclareMathOperator{\crit}{crit}
\DeclareMathOperator{\Iter}{Iter}
\begin{document}

\title{Notes on Laver Tables}
\author{Renrui Qi}
\date{July 04, 2026}
\maketitle
\setlength{\parindent}{10pt}

\begin{abstract}
    We present some new lower bound estimates for certain numbers in Laver table theory and introduce several related structures of interest.
\end{abstract}

\section{Introduction}

The exploration of large cardinal axioms, particularly exceptionally strong ones such as rank-into-rank axioms, has frequently led to the discovery of unexpected and intricate mathematical structures. In the course of investigating these potent axioms, Richard Laver encountered a fascinating class of objects arising from the axiom I3. These objects, now known as Laver tables, possess a rich combinatorial nature.

A profound consequence of Laver's work was demonstrating that the axiom I3 implies the totality of a specific computable function, which, following Dougherty's notation, we denote as $F(n)$. To date, no axiom system weaker than ZFC+I3 has been shown to be strong enough to prove the totality of $F$. The sheer strength of the axiom system required to prove its totality indicates that $F$ is an extremely fast-growing function. In 1992, Dougherty~\cite{Dougherty} established a lower bound for $F(n)$ that marginally surpasses the growth rate of the Ackermann function. 

Although Dougherty suggested that this lower bound should be improved, no such advancements have been made in the three decades since. This paper significantly improves upon Dougherty's lower bound for $F(n)$, demonstrating that $F$ grows so rapidly that its totality cannot be proven within Peano Arithmetic.

I thank the AI for math team of Peking University, Jiang Jiedong, Zhang Zhiyuan and Chen Leheng so much, for using Lean 4 to formalize the main theorem of this work, and finding many typos during their formalization.

\medskip

We recall some notation and facts from~\cite{Dougherty}.  

Recall that I3 states that there is a cardinal~$\lambda$ with a non-trivial elementary embedding \(j\colon V_\lambda\to V_\lambda\). We fix such a cardinal~\(\lambda\) and an embedding~\(j\). 

Let \(\kappa_0 = \crit(j)\), the critical point of~$j$ (the least ordinal~$\kappa$ with $j(\kappa)>\kappa$), and for \(n \in \omega\), set \(\kappa_{n+1} = j(\kappa_n)\). It is known that \(\lim_{n \to \omega} \kappa_{n} = \lambda\).

For two elementary embeddings \( l, l' : V_\lambda \to V_\lambda\), define
\[
l(l') = \bigcup_{\theta \in \lambda} \Bigl(l(l' \cap V_\theta)\Bigr).
\]

$l(l')$ is also an elementary embedding from $V_\lambda$ to itself, the result of \emph{applying} $l$ to~$l'$.

Let \(\Iter(j)\) be the set of elementary embeddings \(V_\lambda \to V_\lambda\) generated by \(j\) through composition and the application operation defined above. Let \(\crit^*(j)\) be the set of critical points of elements in \(\mathrm{Iter}(j)\). This set has order type \(\omega\), and we write \(\mathrm{crit}^*(j) = \{\gamma_0 < \gamma_1 < \cdots\}\). Let \(F(n)\) be the natural number such that \(\gamma_{F(n)} = \kappa_n\). Under the axiom I3, \(F\) is computable and independent of the particular choice of \(\lambda\) and \(j\).

From~\cite{Dougherty}, we know that $F(0)=0, F(1)=1, F(2)=2, F(3)=4, F(4)>\mathrm{Ack}\bigl(9, \mathrm{Ack}(8, \mathrm{Ack}(8, 254))\bigr)$, and $F(n)$ dominates $\mathrm{Ack}(n,n)$ (here $\mathrm{Ack}$ denotes the Ackerman function).

Our main theorem is:

\begin{theorem} \label{thm:peano_arithmetic}
  $F(n)$ dominates every computable function whose totality is provable in Peano Arithmetic.
\end{theorem}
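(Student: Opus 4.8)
The plan is to derive Theorem~\ref{thm:peano_arithmetic} from the proof-theoretic characterization of PA together with a combinatorial analysis of the iteration structure $\Iter(j)$ (equivalently, of the finite Laver tables). First I would invoke Wainer's theorem: along a fixed assignment of fundamental sequences to limit ordinals below $\varepsilon_0$, the fast-growing hierarchy $(f_\alpha)_{\alpha<\varepsilon_0}$ has the feature that every computable function whose totality is provable in PA is eventually dominated by some $f_\alpha$ with $\alpha<\varepsilon_0$. Hence it suffices to show: \emph{for every $\alpha<\varepsilon_0$ there is $N$ with $F(n)\ge f_\alpha(n)$ for all $n\ge N$} (passing to $f_{\alpha+1}$ then upgrades this to strict domination of the target functions). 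It will be convenient to transfer the problem to the finite Laver tables $A_k$ — the left-distributive algebra on $\{1,\dots,2^k\}$: it is known (see \cite{Dougherty}) that $\crit^*(j)$, and therefore $F$, is determined by the sequence $(A_k)_k$, so the whole argument can be phrased as finite combinatorics, with $\lambda$ and $j$ serving only as motivation.

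The core is a construction, by transfinite recursion on $\alpha<\varepsilon_0$ along the chosen fundamental sequences, of a family of embeddings $l_\alpha\in\Iter(j)$ (or their shadows in the $A_k$) together with explicit lower bounds on their critical points. The successor step is driven by the application operation $l\mapsto l(l')$ and iterated composition; nesting these finitely many times already recovers and slightly exceeds Dougherty's Ackermann-type bound, which is exactly the level $\omega$ of the hierarchy. The limit step is where left-distributivity $a*(b*c)=(a*b)*(a*c)$ does the essential work: it lets a single embedding act on the entire iteration structure at once, so that feeding $l_{\alpha[n]}$-type data through a larger embedding diagonalizes over the stages $\alpha[0],\alpha[1],\dots$ of a fundamental sequence, mirroring the defining recursion $f_\alpha(n)=f_{\alpha[n]}^{(n)}(n)$.

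Tracking critical points through this recursion, one shows that for each $\alpha$ the constructed data forces at least $f_\alpha(n)$ elements of $\crit^*(j)$ to lie strictly between $\kappa_n$ and $\kappa_{n+1}$ for all large $n$; since $\gamma_{F(n)}=\kappa_n$ this gives $F(n+1)-F(n)\ge f_\alpha(n)$, and summing yields $F(n)\ge f_\alpha(n)$ eventually, which completes the reduction. The transfinite induction on $\alpha$ is itself carried out in a weak metatheory, so that the statement proved is genuinely the domination claim and not merely its shadow.

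The main obstacle is the coherence of the encoding. I expect the hard part to be pinning down a concrete correspondence between ordinal notations below $\varepsilon_0$ (with their fundamental sequences) and configurations inside the Laver tables, and then proving that the limit/diagonal step interacts correctly with the critical-point count — that replacing an ordinal by a longer initial segment of its fundamental sequence really produces strictly more critical points below a given $\kappa_n$, \emph{uniformly in $n$}. Establishing this uniformity, and showing that the left-distributive "action" loses no critical points when it is nested (precisely the phenomenon Dougherty's estimate controls only up to the Ackermann level), is the technical heart of the paper; the appeal to Wainer's theorem and the finite-Laver-table reduction are then routine packaging.
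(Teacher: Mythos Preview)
Your high-level strategy is correct and matches the paper's: reduce to a proof-theoretic fact (the paper uses Weiermann's Hardy-hierarchy characterization rather than Wainer's fast-growing hierarchy, but these are interchangeable here) and then build, for each $\alpha<\varepsilon_0$, combinatorial data in $\Iter(j)$ that forces enough critical points below $\kappa_n$. You also correctly identify where the difficulty lies. But the proposal stops exactly at that difficulty; what you have is a plan, not a proof, and the missing part is the entire technical content of the paper.

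Concretely, the gap is your ``family of embeddings $l_\alpha\in\Iter(j)$''. A single embedding per ordinal cannot carry the required information: to push beyond the Ackermann level you must track not one embedding but an entire finite \emph{configuration} of embeddings with prescribed relations among their critical sequences. The paper formalizes this as \emph{basic Laver patterns} (finite staircase-shaped diagrams) and their realizations (\emph{basic Laver systems}), together with explicit operations $E$, $M$, $\mathrm{Copied}$, $\mathrm{fullcomp}$ on them. The invariant carried through the recursion is not ``$\crit(l_\alpha)$ is large'' but ``the triple $(\theta_0,\theta_1,\theta_{-1})$ is $f(p,m)$-lined'', where $f$ is a recursively defined partial function on patterns (Definition~\ref{def:the_f_function}) whose totality is the heart of Theorem~\ref{thm:f_is_lower_bound}. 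The well-foundedness needed for that induction is obtained from Dougherty's Theorem~2.2 in~\cite{Dougherty2} (Lemma~\ref{lem:sup_crit} here), via a carefully chosen ordering on systems that compares last cardinals and then last embeddings under the ``is an application of'' relation.

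Your limit step (``left-distributivity lets a single embedding act on the entire iteration structure at once'') is gesturing at the $E$-operation, but the actual mechanism is delicate: one must apply the last embedding $j_n$ to each of the $m$ linedness witnesses $l_1,\dots,l_m$, use the results to build an $m$-step chain of new systems, and verify that the resulting pattern is exactly $p_{\alpha[2^m]}$ (Lemma~\ref{lem:palpha-successor-limit}). The encoding of $\alpha<\varepsilon_0$ by a pattern $p_\alpha$ goes through an auxiliary sequence $ps(\alpha)$ (Definition~\ref{def:ps}) whose interaction with fundamental sequences must be checked by hand. Finally, the whole machine needs a concrete starting system below $\kappa_{n+3}$; the paper obtains this from Dougherty's explicit analysis of $j_{(11)}$ (Lemma~\ref{lem:dougherty_summary}) and a sequence of pattern manipulations producing the blps $p_{BO}$, $p_{ep}$, and $q_n$. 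None of these ingredients is supplied by, or obviously recoverable from, your outline.
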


Also, we will give a much higher lower bound for $F(4)$.
Define \( j_{(1)} = j \), \( j_{(n+1)} = j_{(n)}(j)\), and \(\kappa^m_n = j_{(m)}(\kappa_n)\).
% We write \( j' \overset{\underset{\alpha}{}}{\equiv} j'' \) if and only if, for each \(x, y \in V_\alpha\), \( x \in j'(y)\) if and only if \(x \in j''(y)\). It follows from~\cite{Dougherty} that
% \[
% j_{(n)} \overset{\underset{\gamma_m}{}}{\equiv} j_{(n+2^m)}
% \]
% for all \(0 < n, m < \omega\). Indeed, \( j_{(2^m)} \overset{\underset{\gamma_m}{}}{\equiv} \mathrm{id}\), and \(\mathrm{crit}(j_{2^m}) = \gamma_m\). Given \(m, n, n'\), it is uniformly computable to find (these numbers must exist and do not depend on the particular choice of \(j\)) \(n'', n''' \le 2^m\) such that
% \[
% j_n (j_{n'}) \overset{\underset{\gamma_m}{}}{\equiv} j_{n''}
% \quad \text{and} \quad
% j_n \circ j_{n'} \overset{\underset{\gamma_m}{}}{\equiv} j_{n'''}.
% \]
Dougherty showed that 
\[
  \kappa_3 < \kappa_3^{11} < \kappa_3^{10} < \kappa_3^{9} < \kappa_4.
\]
In~\cite{Dougherty} and~\cite{Dougherty2}, Dougherty proved the following.

\begin{proposition} \ 
% We have:
\begin{enumerate}
    \item \(\kappa^{11}_3 \ge \gamma_{2^{2^{2059}}}\). (Dougherty also conjectured that this is in fact an equality.)
    \item \(\kappa^9_3 > \gamma_{\mathrm{Ack}(4, \mathrm{Ack}(4, 254))}\).
\end{enumerate}
\end{proposition}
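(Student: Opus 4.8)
The plan is to reduce both estimates to finite computations in the Laver tables $A_N=(\{1,\dots,2^N\},*)$ via the critical-point calculus for iterates of $j$. Two facts carry the argument. First, for $l,l'\in\Iter(j)$ we have $\crit(l(l'))=\crit(l')$ when $\crit(l')<\crit(l)$ and $\crit(l(l'))=l(\crit(l'))$ when $\crit(l')\ge\crit(l)$; hence $\crit^*(j)$ is closed under every iterate, so $j_{(m)}(\kappa_3)=j_{(m)}(\gamma_4)=\gamma_{k_m}$ for a well-defined index $k_m$, which is exactly what the two parts of the Proposition bound from below. Second, for each $N$ there is a homomorphism $\pi_N\colon\Iter(j)\to A_N$ with $\pi_N(j)=1$, such that $\crit(l)\ge\kappa_N$ iff $\pi_N(l)=2^N$, each $\pi_{N-1}$ factors through $\pi_N$ via the quotient $A_N\to A_{N-1}$, and the position of $\crit(l)$ within $\crit^*(j)$ can be reconstructed from the sequence $\bigl(\pi_N(l)\bigr)_N$. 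Granting this, fix an iterate $l'$ with $\crit(l')=\kappa_3$ — for instance the right-associated term $j(j(j(j)))$, which has critical point $\kappa_3$ — so that $j_{(m)}(\kappa_3)=\crit\bigl(j_{(m)}(l')\bigr)$, and recover $k_m$ by descending through $\pi_N,\pi_{N-1},\dots$ .

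For the computation, observe $\pi_N(j_{(m)})=m$ (inductively $\pi_N(j_{(p+1)})=\pi_N(j_{(p)})*1=\pi_N(j_{(p)})+1$), so $\pi_N\bigl(j_{(m)}(l')\bigr)=m*\pi_N(l')$ with $\pi_N(l')$ an explicitly computable entry of $A_N$. The critical points $\crit(j_{(m)})$ obey a ruler-type pattern forced by the first fact: $\crit(j_{(1)})=\kappa_0$, $\crit(j_{(2)})=\kappa_1$, and by induction every odd $m\ge 3$ has $\crit(j_{(m)})=\kappa_0$ while every even $m$ has $\crit(j_{(m)})>\kappa_0$; in particular $j_{(9)}$ and $j_{(11)}$ both have critical point $\kappa_0$ yet expand $\gamma_4$ very differently, according to how $9*\pi_N(l')$ versus $11*\pi_N(l')$ behaves in the table. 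Unwinding this gives a finite recursion for the $k_m$ whose growth rate is governed by how deep into the table filtration the computation has reached; one checks that by stage $9$ it is already Ackermannian, with base data from a low-level table computation (the source of the constant $254=2^8-2$ and of the index pattern $\mathrm{Ack}(4,\cdot)$), so that $k_9>\mathrm{Ack}\bigl(4,\mathrm{Ack}(4,254)\bigr)$; and that the smaller expansion performed by $j_{(11)}$ still forces $k_{11}\ge 2^{2^{2059}}$, with equality conjectured. The chain $\kappa_3<\kappa_3^{11}<\kappa_3^{10}<\kappa_3^9<\kappa_4$ recorded above certifies that these $\gamma_{k_m}$ are genuine members of $\crit^*(j)$ below $\kappa_4$, whence in particular $F(4)>\mathrm{Ack}\bigl(4,\mathrm{Ack}(4,254)\bigr)$.

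The main obstacle is the second fact — that $\pi_N$ together with the quotient maps really does compute the ordinal index $k_m$. This is the content of Laver's and Dougherty's structural analysis of $\Iter(j)$, and it is where I3 enters essentially: I3 is precisely what forces the period of the first row of $A_N$ to tend to infinity, hence what makes the table filtration deep enough for $k_9$ and $k_{11}$ to be as large as claimed rather than bounded by a fixed constant. The secondary difficulty is controlling the resulting finite recursion without being able to tabulate $A_N$ at the relevant sizes: for part~1 one must show the recursion genuinely reaches the double exponential (and, for Dougherty's conjectured equality, pin down $m*\pi_N(l')$ and its descent exactly), while for part~2 one must carefully bookkeep the Ackermannian recursion — locating the base case at the small table where $254$ appears, and checking that nine stages yield $\mathrm{Ack}(4,\mathrm{Ack}(4,254))$ while the resulting ordinal still lies below $\kappa_4$.
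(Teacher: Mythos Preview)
This proposition is not proved in the paper; it is quoted from Dougherty's papers \cite{Dougherty,Dougherty2} as background for the paper's own improvements (Proposition~\ref{numberbound}). So there is no ``paper's own proof'' to compare against, only Dougherty's, parts of which the paper recalls in Section~\ref{sec:basic_laver_patterns}.

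That said, your sketch diverges from Dougherty's actual method and has a real gap. Dougherty does not obtain these bounds by computing $\pi_N(j_{(m)}(l'))$ in Laver tables and reading off the index; that is infeasible for the sizes involved. His approach is structural: he introduces the ``$m$-lined'' notion (Definition~\ref{def:m-lined} here), proves that if $\theta_0,\theta_1,\theta_2$ are $m$-lined then $\theta_2\ge\gamma_{2^m}$ (Lemma~\ref{lem:doughery_2_n_lemma}), and then uses concrete facts about the critical sequences of $j_{(10)},j_{(11)}$ (Lemma~\ref{lem:dougherty_summary}) together with amplification lemmas like Lemma~\ref{lem:lined_composition} to drive~$m$ up. The exponent $2^{2059}$ and the Ackermann-type bound arise from iterating these amplification lemmas, not from table lookups.

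Your plan, by contrast, bottoms out in ``one checks that by stage~$9$ it is already Ackermannian'' and ``the smaller expansion performed by $j_{(11)}$ still forces $k_{11}\ge 2^{2^{2059}}$''. That is exactly where the content lies, and the homomorphism framework you set up does not supply it: knowing $\pi_N(j_{(m)}(l'))$ for computable~$N$ tells you nothing directly about whether $\crit(j_{(m)}(l'))\ge\gamma_M$ for $M$ on the order of $2^{2^{2059}}$, since that would require controlling $\pi_M$. You also invoke I3 to force the first-row period to diverge, but the specific numerical bounds here are finite combinatorics independent of large cardinals; I3 is only needed for the existence of the embeddings realizing the patterns, not for the arithmetic of the bounds themselves. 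To make your outline into a proof you would need to replace the hand-waved ``finite recursion'' by something equivalent to Dougherty's lined-triple machinery.
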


In Section~\ref{sec:lower_bounds}, we strengthen these results by proving the following.

\begin{proposition} \label{numberbound}
\ 
% We have:
\begin{enumerate}
    \item \(\kappa^{11}_3 \ge \gamma_{m_{\omega+2}(2)}\).
    \item \(\kappa^{10}_3 \ge \gamma_{N}\), where \(N = 2^{f(p_{init},1)}\).
\end{enumerate}
\end{proposition}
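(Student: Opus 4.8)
Here is how I would approach Proposition~\ref{numberbound}.

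The plan is to push Dougherty's ``critical-sequence'' bookkeeping further, encoding each relevant embedding $l \in \Iter(j)$ by a finite combinatorial \emph{pattern} --- the circle-and-row gadgets --- that records how the critical points of the iterates of $l$ sit inside $\crit^*(j)$ up to some ordinal: circles mark the indices (in the $\gamma$-enumeration) of successive critical points, and the row structure records the nesting forced by composition. The first step is a translation: by elementarity and the left self-distributive law satisfied by the application operation, the assertion $\kappa^{11}_3 = j_{(11)}(\kappa_3) \ge \gamma_i$ is equivalent to a lower bound (of roughly $i$) on the number of elements of $\crit^*(j)$ lying below $j_{(11)}(\kappa_3)$, and this count is exactly what a pattern certifies. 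I would start from an explicit initial pattern $p_{init}$ attached to $\kappa_3$, read off from the known small data ($F(0),\dots,F(3)$ and the first few Laver tables); the seed argument $2$ in $m_{\omega+2}(2)$ is the small quantity attached to $\kappa_3$ at this stage (morally $F(3)-F(2)$).

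Next I would isolate the two elementary moves on patterns. Applying $j$ shifts a pattern up the $\kappa$-ladder and, by elementarity, inserts a controlled block of new critical points; composing two embeddings concatenates their patterns with an interleaving dictated by left-distributivity, which is the source of the ``open a new row when a circle overflows the current row'' mechanism. The technical heart is a \textbf{growth lemma}: if $l$ is certified by a pattern $p$ to have at least $f(p,n)$ critical points of its iterates below $\crit(l(l))$, then $l(l)$ is certified by a pattern $p'$ whose value $f(p',n)$ dominates the result of nesting $f(p,\cdot)$ inside itself --- so each step up the $\Iter$-structure climbs one level of a fast-growing hierarchy. With $\{m_\alpha\}$ set up so that $m_{k+1}$ iterates $m_k$, $m_\omega$ diagonalizes the $m_k$, and $m_{\omega+1},m_{\omega+2}$ iterate those in turn, the growth lemma says that a bounded run of moves already realizes $m_\omega$, and the last two available ``rounds'' before the budget between $\kappa_3$ and $\kappa^{11}_3$ is exhausted push through $m_{\omega+1}$ and then $m_{\omega+2}$; evaluating at the seed gives part~(1). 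For part~(2) I would exploit $j_{(11)} = j_{(10)}(j)$: feeding the pattern that certifies the bound for $\kappa^{11}_3$ through one further application step exponentiates its certified index (an application of $j$ roughly adds an exponent), which yields $\kappa^{10}_3 \ge \gamma_{2^{f(p_{init},1)}} = \gamma_N$, where $f(p_{init},1)$ is the index produced in part~(1) (and $f(p_{init},1) \ge m_{\omega+2}(2)$).

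The main obstacle is the bookkeeping inside the growth lemma. One must show that the interleaving forced by composition never destroys more of the certified critical points than it creates --- that is, that patterns are genuinely \emph{preserved}, not merely transformed --- and, harder, that the pattern transformation dominates the \emph{intended} step of the $m_\alpha$ hierarchy rather than merely something comparable: losing even a bounded factor at each level would collapse $\omega+2$ to $\omega+1$. Concretely, the delicate point is controlling the ``carries'' when a circle's index would exceed the current row length and a new row opens, since that is where the diagonalization across levels physically lives, so the induction on the level of a pattern has to track the overflow positions exactly. A secondary but unavoidable obstacle is pinning down $p_{init}$ rigorously from the small-table computations, because the whole tower is anchored at the value attached to $\kappa_3$ and an off-by-one there propagates all the way up.
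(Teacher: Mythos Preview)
Your high-level picture --- encode embeddings by combinatorial patterns, prove a ``growth lemma'' showing that pattern operations climb a fast-growing hierarchy, and match the levels to the Steinhaus--Moser functions --- is indeed the paper's strategy. But the way you wire the two parts together is backwards, and this misidentifies where the actual work lies.

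First, a factual point: $\kappa_3^{11} < \kappa_3^{10}$ (Dougherty), so part~(2) is the \emph{stronger} statement, not a byproduct of part~(1). In the paper, $p_{init}$ is by construction the pattern whose realization has last cardinal $\kappa_3^{10}$; the bound $N = 2^{f(p_{init},1)}$ drops out immediately by applying the main theorem (``if a bls realizes $p$ with linedness parameter $m$, then $(\theta_0,\theta_1,\theta_{-1})$ are $f(p,m)$-lined'') to that realization and then invoking Dougherty's lemma that an $m$-lined triple forces $\theta_2 \ge \gamma_{2^m}$. The exponent $2^{(\cdot)}$ is therefore \emph{not} an artifact of ``one further application of $j$'' --- the same $2^{(\cdot)}$ appears in \emph{both} parts, and in part~(1) it is absorbed into the Steinhaus--Moser identity via $2^{f(p_{\omega+2},1)} = m_{\omega+2}(2^1) = m_{\omega+2}(2)$. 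Your derivation of part~(2) from $j_{(11)}=j_{(10)}(j)$ does not do what you want: that identity relates the embeddings, not the images $\kappa_3^{11}=j_{(11)}(\kappa_3)$ and $\kappa_3^{10}=j_{(10)}(\kappa_3)$.

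Second, part~(1) does not use $p_{init}$ at all. It uses a different, simpler pattern $p_{\omega+2}$, obtained from $p_{start}.del$ (whose realization has last cardinal $\kappa_3^{11}$) by an explicit finite sequence of $M$ and $E$ operations. The identification $2^{f(p_\alpha,n)} = m_\alpha(2^n)$ for a canonical family $(p_\alpha)_{\alpha<\omega+\omega}$ is a separate combinatorial lemma; once you have it and the observation that the operations preserve the last cardinal, part~(1) is one line. Consequently your claim that ``$f(p_{init},1)$ is the index produced in part~(1)'' is wrong: $f(p_{init},1)$ is expected to be vastly larger than $m_{\omega+2}(2)$, and the paper explicitly leaves its evaluation open. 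The seed $2$ is not $F(3)-F(2)$ either; it is $2^m$ with linedness parameter $m=1$.

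Finally, your ``growth lemma'' is not stated sharply enough to carry a proof. The paper's engine is a single theorem proved by well-founded induction on bls's (ordered lexicographically by last cardinal and then by a well-founded relation on the last embedding coming from Dougherty's $\sup\crit$ theorem), with a five-case recursion defining $f$ that mirrors the zero/successor/limit structure. The ``carry'' bookkeeping you flag is handled by two explicit pattern operations (a copy-and-complete operation $M$ for transient patterns, and an $E$ operation for limit-type patterns), together with a verification that each strictly decreases rank while preserving $\theta_0,\theta_1,\theta_{-1}$. Without those definitions pinned down, the sketch does not become a proof.
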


Here, \(m_\alpha(n)\) is the Steinhaus--Moser function: 
\[
m_0(n) = n^n, 
\quad 
m_{\alpha+1}(n) = m_\alpha^n(n) \text{ for each } \alpha \le \omega+1, 
\quad 
m_\omega(n) = m_n(n),
\]
where the superscript on a function indicates iteration: \(f^1(x) = f(x)\), and \(f^{n+1}(x) = f\bigl(f^n(x)\bigr)\). It is straightforward to see that \(m_{\omega+2}(2)\) exceeds the famous Graham’s number. On the other hand, \(f(p_{init},1)\) is a very large number that we will define later.

We prove these lower bounds in Section~\ref{sec:lower_bounds}. In Section~\ref{sec:laver_table_yarns}, we introduce a new structure called the \emph{Laver table yarn} (LTY). We then present some applications of LTY, including the existence of a computable function that is known to exist only under the axiom I2.

Finally, in Section~\ref{sec:future_work}, we propose some open questions.

\section{Basic Laver patterns}
\label{sec:basic_laver_patterns}

Similar to Dougherty's proof, our improvements to the lower bound are also based on the combinatorial study of the effect of the application operation on the critical points.

Before developing our machinery, we recall the proof of a result in~\cite{Dougherty} to illustrate the flavor of our proofs.

For an elementary embedding \( k : V_\lambda \to V_\lambda \), we write
\[
k : \theta_0 \mapsto \theta_1 \mapsto \theta_2 \mapsto \dots \mapsto \theta_n
\]
if \(\theta_0 = \mathrm{crit}(k)\) and, for \(n' \le n-1\), we have \(\theta_{n'+1} = k(\theta_{n'})\).

The following lemma is straightforward and will be frequently used.

\begin{lemma}\label{lem:l_k_application}
  If
\[
k : \theta_0 \mapsto \theta_1 \mapsto \dots \mapsto \theta_n,
\]
then for another embedding \(l\),
\[
l(k) : \, l(\theta_0) \mapsto l(\theta_1) \mapsto \dots \mapsto l(\theta_n).
\]
\end{lemma}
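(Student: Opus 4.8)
The plan is to treat this as a routine consequence of the elementarity of $l$. The cleanest route is through the single identity
\[
  l(k)\bigl(l(x)\bigr) = l\bigl(k(x)\bigr) \quad \text{for all } x \in V_\lambda,
\]
together with the fact that $\mathrm{crit}(l(k)) = l(\mathrm{crit}(k))$. Granting these, the lemma is immediate: from $\theta_{i+1} = k(\theta_i)$ we get $l(k)(l(\theta_i)) = l(k(\theta_i)) = l(\theta_{i+1})$ for each $i \le n-1$, and $l(\theta_0) = l(\mathrm{crit}(k)) = \mathrm{crit}(l(k))$, which is exactly the assertion $l(k)\colon l(\theta_0) \mapsto l(\theta_1) \mapsto \dots \mapsto l(\theta_n)$.

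To establish the two facts, I would pass to the set-sized approximations $k \cap V_\theta$ appearing in the definition of application. Fix an ordinal $\theta < \lambda$ larger than $\theta_0, \dots, \theta_n$; since $\lambda$ is a limit and $k(\theta) < \lambda$, the restriction $k \cap V_\theta$ is an element of $V_\lambda$, so $l(k \cap V_\theta)$ makes sense and is again in $V_\lambda$. The statements ``$k \cap V_\theta$ is a function with domain $V_\theta$'', ``$(k \cap V_\theta)(\alpha) = \alpha$ for all ordinals $\alpha < \theta_0$'', ``$(k \cap V_\theta)(\theta_0) \ne \theta_0$'', and ``$(k \cap V_\theta)(\theta_i) = \theta_{i+1}$ for $i \le n-1$'' are all first-order over $(V_\lambda; \in)$ with parameters among $k \cap V_\theta, \theta, \theta_0, \dots, \theta_n$, and they hold because $\theta_0 = \mathrm{crit}(k)$ and $\theta_{i+1} = k(\theta_i)$. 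Applying $l$ — which preserves $\in$, hence $<$ on ordinals, and sends $V_\theta$ to $V_{l(\theta)}$ — transports these to the corresponding statements about $l(k \cap V_\theta)$: it is a function with domain $V_{l(\theta)}$, it fixes every ordinal below $l(\theta_0)$, it moves $l(\theta_0)$, and it sends $l(\theta_i)$ to $l(\theta_{i+1})$. Since $l(k) = \bigcup_{\theta < \lambda} l(k \cap V_\theta)$ and the pieces cohere (see below), all of these properties pass to $l(k)$; in particular $l(k)$ is a nontrivial elementary embedding with critical point $l(\theta_0)$ — using $k(\theta_0) > \theta_0$ and $l$ order-preserving to see $l(k)(l(\theta_0)) = l(k(\theta_0)) > l(\theta_0)$ — and $l(k)(l(\theta_i)) = l(\theta_{i+1})$, which is the lemma. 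The general identity $l(k)(l(x)) = l(k(x))$ comes out the same way with $\theta_i$ replaced by an arbitrary $x \in V_\theta$.

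The one point that deserves care — the only thing I would call a genuine step rather than bookkeeping — is the coherence of the approximations: for $\theta < \theta'$ one needs $l(k \cap V_\theta)$ to be the restriction of $l(k \cap V_{\theta'})$ to $V_{l(\theta)}$, so that the union $l(k)$ is a well-defined function extending each $l(k \cap V_\theta)$ and the transferred first-order properties are genuinely properties of $l(k)$. This follows by applying $l$ to the evident identity $k \cap V_\theta = (k \cap V_{\theta'}) \cap V_\theta$, but it is precisely the place where one uses that the objects $k \cap V_\theta$ truly belong to $V_\lambda$, so that $l$ may be applied to them at all. Everything else is direct elementarity together with the implication $\alpha < \beta \Rightarrow l(\alpha) < l(\beta)$.
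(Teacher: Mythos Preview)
Your argument is correct and is exactly the standard proof via the identity $l(k)(l(x))=l(k(x))$ and $\mathrm{crit}(l(k))=l(\mathrm{crit}(k))$, established by pushing the set-sized approximations $k\cap V_\theta$ through the elementarity of~$l$. The paper itself offers no proof of this lemma at all, declaring it ``straightforward'' and leaving it to the reader; your write-up is therefore more detailed than what the paper provides, but entirely in line with what the author had in mind.
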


\begin{definition} \label{def:m-lined}
  Let $m$ be a positive integer, and let $\theta_1 < \theta_2 < \theta_3 < \lambda$. We say that $\theta_1, \theta_2, \theta_3$ are \textit{$m$-lined} if there exist $\theta_2 = \theta'_0 < \cdots < \theta'_{m} = \theta_3$ such that for each $0 < i \le m$, there is $j_i \in \mathrm{Iter}(j)$ such that $j_i: \theta_1 \mapsto \theta'_{i-1} \mapsto \theta'_{i}$.
\end{definition}

Note that if $j_1,...,j_{m}$ witness that $\theta_1,\theta_2,\theta_3$ are $m$-lined, then $j_1$, $j_2$,\dots, $j_{m-2}$, $j_m(j_m)(j_{m-1})$ witness that $\theta_1,\theta_2,\theta_3$ are $(m-1)$-lined. This means that larger $m$ means a stronger condition.

\begin{lemma}\label{lem:lined_composition}
  If $\theta_1,\theta_2,\theta_3$ are $m$-lined, and $\theta_2,\theta_3,\theta_4$ are $m'$-lined, then $\theta_1,\theta_2,\theta_4$ are $m\cdot 2^{m'}$-lined.
\end{lemma}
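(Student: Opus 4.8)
The plan is to prove the statement by induction on $m'$. The base case $m'=1$ — gluing one extra step onto the right end of an $m$-ladder — contains the whole idea, and the factor $2^{m'}$ then comes from iterating the base case.

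For the base case, suppose $a_1,\dots,a_m$ together with $\theta_2=\theta'_0<\cdots<\theta'_m=\theta_3$ witness that $\theta_1,\theta_2,\theta_3$ are $m$-lined, and let $b$ witness that $\theta_2,\theta_3,\theta_4$ are $1$-lined, i.e.\ $b\colon\theta_2\mapsto\theta_3\mapsto\theta_4$. I would take as the new chain the old chain followed by its image under $b$,
\[
\theta_2=\theta'_0<\cdots<\theta'_m=\theta_3=b(\theta'_0)<b(\theta'_1)<\cdots<b(\theta'_m)=\theta_4 ,
\]
which makes sense because $\theta'_0=\theta_2=\mathrm{crit}(b)$ gives $b(\theta'_0)=b(\theta_2)=\theta_3=\theta'_m$, because $b$ is strictly increasing on ordinals, and because $b(\theta'_m)=b(\theta_3)=\theta_4<\lambda$; it has $2m$ steps. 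The first $m$ steps are witnessed by $a_1,\dots,a_m$. For the step from $b(\theta'_{i-1})$ to $b(\theta'_i)$ I would use $b(a_i)$, which lies in $\mathrm{Iter}(j)$ by closure under application: Lemma~\ref{lem:l_k_application} turns $a_i\colon\theta_1\mapsto\theta'_{i-1}\mapsto\theta'_i$ into $b(a_i)\colon b(\theta_1)\mapsto b(\theta'_{i-1})\mapsto b(\theta'_i)$, and since $\theta_1<\theta_2=\mathrm{crit}(b)$ we have $b(\theta_1)=\theta_1$, so in fact $b(a_i)\colon\theta_1\mapsto b(\theta'_{i-1})\mapsto b(\theta'_i)$. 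This gives that $\theta_1,\theta_2,\theta_4$ are $2m$-lined.

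For the inductive step, let $m'\ge 2$ and assume the statement for $m'-1$. I would take witnesses $b_1,\dots,b_{m'}$ and $\theta_3=\theta''_0<\cdots<\theta''_{m'}=\theta_4$ for ``$\theta_2,\theta_3,\theta_4$ are $m'$-lined'' and set $\theta^\ast=\theta''_{m'-1}$. Then $b_1,\dots,b_{m'-1}$ with $\theta_3=\theta''_0<\cdots<\theta''_{m'-1}=\theta^\ast$ witness that $\theta_2,\theta_3,\theta^\ast$ are $(m'-1)$-lined, so the inductive hypothesis yields that $\theta_1,\theta_2,\theta^\ast$ are $m\cdot 2^{m'-1}$-lined; and $b_{m'}\colon\theta_2\mapsto\theta^\ast\mapsto\theta_4$ witnesses that $\theta_2,\theta^\ast,\theta_4$ are $1$-lined. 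Feeding these two facts into the base case (with $m\cdot 2^{m'-1}$ in the role of $m$) gives that $\theta_1,\theta_2,\theta_4$ are $2\cdot\bigl(m\cdot 2^{m'-1}\bigr)=m\cdot 2^{m'}$-lined.

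I expect the only points needing real attention to be the two verifications inside the base case: that the concatenated chain is strictly increasing and stays below $\lambda$ (immediate, since $b$ is order-preserving and $b(\theta_3)=\theta_4<\lambda$), and — the one genuinely conceptual step — that applying $b$ to the ladder embeddings $a_i$ keeps the critical point pinned at $\theta_1$ rather than raising it, which is exactly where $\theta_1<\mathrm{crit}(b)$ is used. There is no serious obstacle hiding here; the exponential blow-up is forced because each of the $m'$ successive $b$-steps sits at critical point $\theta_2$ and must be pulled back past $\theta_1$ one at a time, doubling the ladder length each time.
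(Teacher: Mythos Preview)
Your proof is correct and follows essentially the same approach as the paper: induction on $m'$, with an identical base case (apply the single embedding $b$ to the entire $m$-ladder and concatenate). Your inductive step is actually a bit cleaner than the paper's: you invoke the induction hypothesis once (on the first $m'-1$ steps of the outer ladder) and then feed the result straight into the base case via $b_{m'}$, whereas the paper applies the induction hypothesis a second time to an auxiliary triple $\theta_1,\mu'_{m'-1},l'_{m'}(\theta_3),\theta_4$ before concatenating; both routes arrive at the same $m\cdot 2^{m'}$.
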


\begin{proof}
  We use induction on $m'$. Let the cardinals 
  \[
     \theta_2 = \mu_0 < \mu_1 < \cdots < \mu_m = \theta_3
   \]
   and the embeddings $l_i : \theta_1\mapsto \mu_{i-1}\mapsto \mu_i$ witness that $\theta_1,\theta_2,\theta_3$ are $m$-lined. 

   \smallskip

  The base case is $m'=1$. Let $k$ be an embedding with $k:\theta_2\mapsto \theta_3\mapsto \theta_4$. Then for $i=1,\dots, m$, 
   \[
   k(l_i): \theta_1\mapsto k(\mu_{i-1})\mapsto k(\mu_i)
   \]
   and $\theta_3 = k(\mu_0) < k(\mu_1) < \cdots < k(\mu_m) = \theta_4$, so $l_1$, \dots, $l_m$, $k(l_1)$, \dots, $k(l_m)$ show that $\theta_1,\theta_2,\theta_4$ are $2m$-lined. 

  \smallskip

  Suppose that $m'>1$. Let the cardinals $\theta_3  = \mu'_0 < \mu'_1 < \cdots < \mu'_{m'} = \theta_4$ and the embeddings $l'_1$, $l'_2$, \dots, $l'_{m'-1}$ witness that $\theta_2,\theta_3,\theta_4$ are $m'-1$-lined. 

  By the induction hypothesis, $\theta_1,\theta_2,\mu'_{m'-1}$ are $m\cdot 2^{m'-1}$-lined. 

  Further:
  \begin{itemize}
    \item The embeddings $l'_{m'}(l_1), l'_{m'}(l_2)$, \dots, $l'_{m'}(l_m)$ show that $\theta_1,\mu'_{m'-1}, l'_{m'}(\theta_3)$ are $m$-lined; and
    \item The embeddings $l'_{m'}(l'_1), l'_{m'}(l'_2)$, \dots, $l'_{m'}(l'_{m'-1})$ show that $\mu'_{m'-1}, l'_{m'}(\theta_3),\theta_4$ are $m'$-lined. 
  \end{itemize}
  By induction, $\theta_1,\mu'_{m'-1}, \theta_4$ are $m\cdot 2^{m'-1}$-lined. Putting these together, we get that $\theta_1,\theta_2,\theta_4$ are $m\cdot 2^{m'}$-lined. 
  %
  % The case $m'=0$ (interpreted as $\theta_4=\theta_3$) is immediate. 
  % Assume $l_1,l_2,...,l_m$ witness that $\theta_1,\theta_2,\theta_3$ are $m$-lined, and $l'_1$, $l'_2$, \dots, $l'_{m'}$ witness that $\theta_2,\theta_3,\theta_4$ are $m'$-lined. 
  % Let~$\theta$ be the cardinal such that $l'_{m'}:\theta_2\mapsto\theta\mapsto\theta_4$. By the induction hypothesis, $\theta_1,\theta_2,\theta$ are $m\cdot 2^{m'-1}$-lined. Notice that $l'_{m'}(l_1),l'_{m'}(l_2),...,l'_{m'}(l_m)$ witness that $\theta_1,\theta,l'_{m'}(\theta_3)$ are $m$-lined, and $l'_{m'}(l'_1)$, $l'_{m'}(l'_2)$, \dots, $l'_{m'}(l'_{m'-1})$ witness that $\theta,l'_{m'}(\theta_3), \theta_4$ are $(m-1)$-lined. By the induction hypothesis, $\theta_1,\theta,\theta_4$ are $m\cdot 2^{m'-1}$-lined. Combining these gives the desired result.
\end{proof}

Let us consider a specific example to see what is happening. When $m=3,m'=2$, we can express our conditions as in Figure~\ref{fig:123-3-lined_234-2-lined}.

  \begin{figure}[htpb]
  \centering
    \drawSimplePatternFigure{0.1}{0.5}{0.5}{0,1,2,-1,0,2,3,-1,0,3,4,-1,1,4,5,-1,1,5,6,-1}    
    \caption{$\theta_1$, $\theta_2$, $\theta_3$ are 3-lined; and $\theta_2$, $\theta_3$, $\theta_4$ are 2-lined.}
    \label{fig:123-3-lined_234-2-lined}
  \end{figure}

Every row represents an embedding, and every circle denotes a cardinal. Two circles in the same column represent the same cardinal. 

In Figure \ref{fig:123-3-lined_234-2-lined}, the first, second, fifth, and seventh columns represent $\theta_1,\theta_2,\theta_3,\theta_4$ respectively, and the sixth column corresponds to the cardinal $\mu'_{m'-1}= \mu'_1$ in the proof above. The first three rows form the witness that $\theta_1,\theta_2,\theta_3$ are $3$-lined, and the last two rows form the witness that $\theta_2,\theta_3,\theta_4$ are $2$-lined. We use $l_1,l_2,l_3,l'_1,l'_2$ to denote the embeddings represented by those 5 rows.

Let us consider replacing $l'_2$ in the sequence with $l'_2(l_1)$, $l'_2(l_2)$, $l'_2(l_3)$, $l'_2(l'_1)$, that is, applying $l'_2$ to each of the other embeddings. Then the resulting sequence of embeddings can be represented by the diagram in Figure~\ref{fig:l2_application}. In this diagram, the first, second and fifth columns correspond to $\theta_1$, $\theta_2$, $\theta_3$, and the last column corresponds to~$\theta_4$. 

\drawSimplePattern{0.1}{0.5}{0.5}{0,1,2,-1,0,2,3,-1,0,3,4,-1,1,4,5,-1,0,5,6,-1,0,6,7,-1,0,7,8,-1,5,8,9,-1}[Result of applying $l'_2$ to the other embeddings.][fig:l2_application]

Then we apply the induction hypothesis to the first 4 rows and the last 4 rows in Figure~\ref{fig:l2_application}.

Intuitively, in the diagram, $l(k)$ denotes moving the circles of $k$ along $l$. In this way, Figure~\ref{fig:l2_application} can be considered as the result of applying the last row to the four rows above it, in Figure~\ref{fig:123-3-lined_234-2-lined}.

Lemma~\ref{lem:lined_composition} has an immediate consequence:

\begin{lemma}
\label{aaab}
  If $\theta_1,\theta_2,\theta_3$ are $m$-lined, and there is $k\in \Iter(j)$ such that $k:\theta_1\mapsto\theta_2\mapsto\theta_3\mapsto\theta_4$, then $\theta_1,\theta_2,\theta_4$ are $m\cdot 2^m$-lined.
\end{lemma}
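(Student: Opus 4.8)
The plan is to reduce Lemma~\ref{aaab} to Lemma~\ref{lem:lined_composition}. A first, too-weak attempt is to observe that $k(k):\theta_2\mapsto\theta_3\mapsto\theta_4$ with $\crit(k(k))=k(\theta_1)=\theta_2$ (by Lemma~\ref{lem:l_k_application}), so that $\theta_2,\theta_3,\theta_4$ are $1$-lined; but then Lemma~\ref{lem:lined_composition} only yields $m\cdot 2^1=2m$-lined. To reach $m\cdot 2^m$ we must instead show that $\theta_2,\theta_3,\theta_4$ are $m$-lined, after which Lemma~\ref{lem:lined_composition} (taking $m'=m$) gives the desired conclusion directly. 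The idea is to transport the \emph{entire} witness that $\theta_1,\theta_2,\theta_3$ are $m$-lined up one level by applying $k$ to it.

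Concretely, I would fix the data witnessing the hypothesis: cardinals $\theta_2=\mu_0<\mu_1<\cdots<\mu_m=\theta_3$ and embeddings $l_i\in\Iter(j)$ with $l_i:\theta_1\mapsto\mu_{i-1}\mapsto\mu_i$ for $1\le i\le m$. Since $\Iter(j)$ is closed under the application operation and $k\in\Iter(j)$, each $k(l_i)$ lies in $\Iter(j)$, and by Lemma~\ref{lem:l_k_application} we have $k(l_i):k(\theta_1)\mapsto k(\mu_{i-1})\mapsto k(\mu_i)$. As $k(\theta_1)=\theta_2$, this says $\crit(k(l_i))=\theta_2$ and $k(l_i):\theta_2\mapsto k(\mu_{i-1})\mapsto k(\mu_i)$. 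Setting $\nu_i=k(\mu_i)$ for $0\le i\le m$, elementarity of $k$ gives $\nu_0<\nu_1<\cdots<\nu_m$, while $\nu_0=k(\theta_2)=\theta_3$ and $\nu_m=k(\theta_3)=\theta_4$. Thus the chain $\theta_3=\nu_0<\cdots<\nu_m=\theta_4$ together with $k(l_1),\dots,k(l_m)$ witnesses that $\theta_2,\theta_3,\theta_4$ are $m$-lined.

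It remains to invoke Lemma~\ref{lem:lined_composition} with $\theta_1,\theta_2,\theta_3$ being $m$-lined and $\theta_2,\theta_3,\theta_4$ being $m$-lined, which yields that $\theta_1,\theta_2,\theta_4$ are $m\cdot 2^m$-lined, as claimed. I do not anticipate a real obstacle: the only point requiring care is the bookkeeping that the transported witness has the correct critical point ($\crit(k(l_i))=k(\theta_1)=\theta_2$ rather than $\theta_1$) and the correct endpoints ($k$ sends $\theta_2\mapsto\theta_3$ and $\theta_3\mapsto\theta_4$), and both are immediate from Lemma~\ref{lem:l_k_application} and the elementarity of $k$. The conceptual takeaway is that ``applying $k$'' shifts an $m$-lined configuration based at $\theta_1$ to one based at $\theta_2$, and the composition lemma then supplies the exponential blow-up.
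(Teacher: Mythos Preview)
Your proposal is correct and follows exactly the paper's approach: apply $k$ to the witnesses $l_1,\dots,l_m$ to show that $\theta_2,\theta_3,\theta_4$ are $m$-lined, then invoke Lemma~\ref{lem:lined_composition} with $m'=m$. The paper's proof is just a two-line version of what you wrote.
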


\begin{proof}
  Suppose that $l_1$, $l_2$, \dots, $l_m$ witness that $\theta_1,\theta_2,\theta_3$ are $m$-lined. Then $k(l_1), k(l_2)$, \dots, $k(l_m)$  witness that $\theta_2,\theta_3,\theta_4$ are $m$-lined as well; we apply Lemma~\ref{lem:lined_composition}. 
\end{proof}

% (Applying $k$ on the witness that $\theta_1,\theta_2,\theta_3$ are $m$-lined, we have $\theta_2,\theta_3,\theta_4$ are $m$-lined.)

For example, when $m=3$, the conditions in this lemma can be represented by Figure~\ref{fig:lemma_aaab_cond}. The first, second, fifth, and sixth columns represent the cardinals $\theta_1, \theta_2, \theta_3, \theta_4$ respectively.

\drawSimplePattern{0.1}{0.5}{0.5}{0,1,2,-1,0,2,3,-1,0,3,4,-1,0,1,4,5,-1}[Representation of the conditions in Lemma~\ref{aaab} for $m=3$.][fig:lemma_aaab_cond]

Applying the last row to the previous rows, we get Figure~\ref{fig:lemma_aaab_result}.

\drawSimplePattern{0.1}{0.5}{0.5}{0,1,2,-1,0,2,3,-1,0,3,4,-1,1,4,5,-1,1,5,6,-1,1,6,7,-1}[Result of applying the last row's embedding to the previous rows.][fig:lemma_aaab_result]

And we can apply Lemma~\ref{lem:lined_composition} on this diagram and get that $\theta_1, \theta_2, \theta_4$ are $3 \cdot 2^3 = 24$-lined.

\smallskip

This analysis allows us to give a relatively weak lower bound on the growth rate of the function~$F$ mentioned in the introduction. We use the following lemma, which is \cite[Lemma~2]{Dougherty}. Recall that $\gamma_0 < \gamma_1 < \dots < $ is the increasing enumeration of $\crit^*(j)$, so that $\kappa_n = \gamma_{F(n)}$. 

\begin{lemma}[\cite{Dougherty}] \label{lem:doughery_2_n_lemma}
  If $\theta_0,\theta_1,\theta_2$ are $m$-lined, then $\theta_2\ge \gamma_{2^m}$.
\end{lemma}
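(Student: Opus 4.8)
The plan is to prove a slightly stronger statement by induction on $m$: \emph{if $\theta_0,\theta_1,\theta_2$ are $m$-lined, then the half-open interval $(\theta_0,\theta_2]$ contains at least $2^m$ elements of $\crit^*(j)$.} Granting this, the lemma follows immediately: $\theta_0$ is itself the critical point of any witnessing embedding, so $\theta_0\in\crit^*(j)$, and $\theta_0<\theta_2$; adjoining $\theta_0$ to the $2^m$ critical points in $(\theta_0,\theta_2]$ produces at least $2^m+1$ elements of $\crit^*(j)$ that are $\le\theta_2$, which by the definition of the enumeration $\gamma_0<\gamma_1<\cdots$ is exactly the assertion $\theta_2\ge\gamma_{2^m}$.

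For the base case $m=1$, a single embedding $k$ witnesses $k:\theta_0\mapsto\theta_1\mapsto\theta_2$. Applying Lemma~\ref{lem:l_k_application} with $l=k$ shows $\crit(k(k))=k(\crit(k))=k(\theta_0)=\theta_1$, and applying it once more to $k(k)$ shows $\crit(k(k(k)))=k(\crit(k(k)))=k(\theta_1)=\theta_2$. Hence $\theta_1,\theta_2\in\crit^*(j)$, and both lie in $(\theta_0,\theta_2]$, so that interval contains at least $2=2^1$ critical points.

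For the inductive step with $m\ge 2$, I would fix a witness $\theta_1=\theta'_0<\theta'_1<\dots<\theta'_m=\theta_2$ and embeddings $j_i\in\Iter(j)$ with $j_i:\theta_0\mapsto\theta'_{i-1}\mapsto\theta'_i$ for $1\le i\le m$, and split the count into two disjoint pieces. First, $j_1,\dots,j_{m-1}$ together with $\theta'_0<\dots<\theta'_{m-1}$ witness that $\theta_0,\theta_1,\theta'_{m-1}$ are $(m-1)$-lined, so the induction hypothesis yields at least $2^{m-1}$ critical points in $(\theta_0,\theta'_{m-1}]$. Second, apply $j_m$ to each of $j_1,\dots,j_{m-1}$: by Lemma~\ref{lem:l_k_application}, $j_m(j_i):j_m(\theta_0)\mapsto j_m(\theta'_{i-1})\mapsto j_m(\theta'_i)$, and since $j_m(\theta_0)=\theta'_{m-1}$ while $j_m(\theta'_{m-1})=\theta'_m=\theta_2$, these $m-1$ embeddings together with the increasing sequence $j_m(\theta'_0)<\dots<j_m(\theta'_{m-1})=\theta_2$ witness that $\theta'_{m-1},\,j_m(\theta'_0),\,\theta_2$ are $(m-1)$-lined — the required strict inequality $\theta'_{m-1}<j_m(\theta'_0)$ holds because $\theta_0=\crit(j_m)$ and $\theta_0<\theta_1=\theta'_0$, so $\theta'_{m-1}=j_m(\theta_0)<j_m(\theta'_0)$. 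The induction hypothesis then gives at least $2^{m-1}$ critical points in $(\theta'_{m-1},\theta_2]$. Since $(\theta_0,\theta'_{m-1}]$ and $(\theta'_{m-1},\theta_2]$ are disjoint with union $(\theta_0,\theta_2]$, this yields at least $2^{m-1}+2^{m-1}=2^m$ critical points, completing the induction.

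The step that will need the most care — though I do not expect a genuine obstacle — is checking that the shifted configuration $\theta'_{m-1},\,j_m(\theta'_0),\,\theta_2$ literally meets Definition~\ref{def:m-lined} for $(m-1)$-lined: one must confirm the critical points come out right (which is precisely the content of Lemma~\ref{lem:l_k_application}), that $j_m(j_i)\in\Iter(j)$ (immediate, since $\Iter(j)$ is closed under application), and that all the strict inequalities among $\theta'_{m-1}$, $j_m(\theta'_0)$, $\dots$, $\theta_2$ hold, so that the two subintervals of $(\theta_0,\theta_2]$ are genuinely disjoint and their critical-point counts add. Everything else is bookkeeping, and the only conceptual input beyond Lemma~\ref{lem:l_k_application} is the repeatedly used observation that applying an embedding to a copy of itself raises its critical point.
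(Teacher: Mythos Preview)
The paper does not supply its own proof of this lemma: it is quoted as \cite[Lemma~2]{Dougherty} and used as a black box. So there is no ``paper's proof'' to compare against, only Dougherty's original, which your argument essentially reconstructs.

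Your proof is correct. The inductive structure --- split off the last witness $j_m$, apply the hypothesis to the first $m-1$ witnesses to get $2^{m-1}$ critical points in $(\theta_0,\theta'_{m-1}]$, then push the whole configuration forward by $j_m$ to get another $(m-1)$-lined triple based at $\theta'_{m-1}$ and hence $2^{m-1}$ further critical points in $(\theta'_{m-1},\theta_2]$ --- is exactly the doubling mechanism that also drives the paper's Lemma~\ref{lem:lined_composition}. All the verifications you flag (order preservation giving $\theta'_{m-1}=j_m(\theta_0)<j_m(\theta'_0)$, closure of $\Iter(j)$ under application, disjointness of the two half-open intervals) go through as you indicate. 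The base case and the final counting step (adjoining $\theta_0$ to pass from ``$2^m$ points in $(\theta_0,\theta_2]$'' to ``$\theta_2\ge\gamma_{2^m}$'') are also fine.
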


Notice that 
 \[
 j_{(2)}:\kappa_1\mapsto\kappa_2\mapsto\kappa_3\mapsto \cdots
 \]
 and 
  \[
  j(j_{(2)}):\kappa_2\mapsto\kappa_3\mapsto\cdots 
  \]
 Repeatedly applying $j_{(2)}$ or $j(j_{(2)})$ on $j$ we see that for any $n\in \omega$ there are $j_n,j'_n\in \Iter(j)$ such that 
  \[
  j_n:\kappa_0\mapsto\kappa_{n+1}\mapsto\kappa_{n+2}
  \]
  and
  \[
  j'_n:\kappa_0\mapsto\kappa_1\mapsto\kappa_{n+2}\mapsto\kappa_{n+3}.
  \]
So $\kappa_0,\kappa_1,\kappa_{n+2}$ are $n$-lined, so by Lemma~\ref{aaab}, $\kappa_0,\kappa_1,\kappa_{n+3}$ are $n\cdot 2^n$-lined, so by Lemma~\ref{lem:doughery_2_n_lemma}, $\kappa_{n+3}\ge \gamma_{2^{n\cdot 2^n}}$. So $F(n+3)\ge 2^{n\cdot 2^n}$.

\subsubsection*{Basic Laver patterns and systems}

Now we start the definition of basic Laver patterns.

For a sequence $s$, we use $|s|$ to denote the number of entries in $s$. We write $s=(s_1,...,s_n)$. We use $s_{-k}$ to denote $s_{|s|+1-k}$, that is, the $k$th element from the end; $s_{-1}$ is the last element of $s$. This notation will be used also when $s$ is a sequence of sequences; in that case, $s_{i,k}$ is the $k$th entry of the $i$th sequence $s_i$. Similarly, $s_{i,-1}=s_{i,|s_i|}$ is the last entry of the $i$th sequence $s_i$, $s_{i,-2} = s_{i,|s_i|-1}$ is the second to last entry, and so on. 

Now we define a combinatorial structure that will describe some of the critical points we will be interested in.

\begin{definition}
    We say that a pair $p = (s, l)$ is a \textit{basic Laver pattern}, or \textit{blp} for short, if for some integer $n \ge 2$ (called the \textit{length} of $p$), we have:

    \begin{enumerate}
        \item $s = (s_1, s_2, \dots, s_n)$ is a sequence such that for each $i$, $s_i$ is a strictly increasing sequence of natural numbers, $|s_i| \ge 3$, and the last two entries of $s_i$ are $i, i+1$. Moreover, $s_1 = (0, 1, 2)$ and $s_2 = (0, 1, 2, 3)$. The sequence $s_i$ is called the \emph{$i$th row} of $p$. 

        \item $\ell = (\ell_1, \ell_2, \dots, \ell_n)$, where $\ell_i \in \omega$. When $|s_i|$ is odd, $\ell_i=\frac{|s_i|-1}{2}$. When $|s_i|=4,\ell_i=1$. When $|s_i|>4$ is even, $\ell_i$ is either $\frac{|s_i|}{2}$ or $\frac{|s_i|}{2}-1$. $\ell_i$ is called the \textit{step length} of the row $i$.
    \end{enumerate}

    If $|s_i|>3$ is odd, then we say that the row $i$ is \textit{suitable}. 
\end{definition}

Given a blp $p$, we can draw the pattern to make it clearer.

For example, for $s = ((0, 1, 2), (0, 1, 2, 3), (0, 1, 2, 3, 4), (0, 1, 2, 4, 5))$, $\ell = (1, 1, 2, 2)$, we can draw $p$ as in Figure~\ref{fig:blp_{ep}xample}.

\drawSequence{0.1}{0.5}{0.5}{0,1,2,-1,0,1,2,3,-1,0,1,2,3,4,-2,0,1,2,4,5,-2}[A basic Laver pattern with $s = ((0, 1, 2), \dots)$ and $\ell = (1, 1, 2, 2)$.][fig:blp_{ep}xample]

The step lengths $\ell_i$ indicates that the corresponding embedding shifts the ordinal $\ell_i$ places to the right rather than only one. The definition of basic Laver system below will make this clear.

\begin{notation} \label{notation:python_notation}
When there are several blps, we use, for example, $p.s_2, p.l_3$ to denote those $s_2,l_3$ of $p$. We use $p.n$ to denote the length~$n$ of $p$. We found that this python-like notation is easier to use in this paper.
% than, for example, using something like $f_s(p,i)$ to denote $s_i$.      
\end{notation}

To define what is a realization of a blp, we use the following notation.

For an elementary embedding \( k : V_\lambda \to V_\lambda \), we write
\[
k : (m)\theta_0 \mapsto \theta_1 \mapsto \theta_2 \mapsto \dots \mapsto \theta_n
\]
if \(\theta_0 = \mathrm{crit}(k)\) and, for \(n' \le n-m\), we have \(\theta_{n'+m} = k(\theta_{n'})\). This generalizes the notation above, which is the case $m=1$. If \(m=1\) and \(n>1\), we may omit \((m)\) as we did above. 

The following lemma is straightforward and used frequently. It generalizes Lemma~\ref{lem:l_k_application}.

\begin{lemma} \label{lem:lk_application_general}
 If
  \[
k : (m)\theta_0 \mapsto \theta_1 \mapsto \dots \mapsto \theta_n,
\]
then for another embedding \(l\),
\[
l(k) : (m) \, l(\theta_0) \mapsto l(\theta_1) \mapsto \dots \mapsto l(\theta_n).
\]
\end{lemma}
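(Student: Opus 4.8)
The plan is to reduce the statement to elementarity of $l$ together with the basic behaviour of the application operation, exactly as in the proof of Lemma~\ref{lem:l_k_application}; the extra parameter $m$ plays no essential role and only shifts indices.

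First I would unpack what the two ``$(m)$-arrow'' assertions mean. The hypothesis ``$k : (m)\theta_0 \mapsto \theta_1 \mapsto \dots \mapsto \theta_n$'' says precisely that $\theta_0 = \crit(k)$ and $k(\theta_{n'}) = \theta_{n'+m}$ for every $n' \le n-m$; the desired conclusion unpacks the same way, into $l(\theta_0) = \crit(l(k))$ and $l(k)(l(\theta_{n'})) = l(\theta_{n'+m})$ for every $n' \le n-m$. So it suffices to establish two facts about application: (i) $\crit(l(k)) = l(\crit(k))$, and (ii) $l(k)(l(x)) = l(k(x))$ for all $x \in V_\lambda$.

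For (ii), fix $\theta < \lambda$ large enough that $x \in V_\theta$ and $k(x) \in V_\theta$ (possible since $k(x) \in V_\lambda$). From the defining formula $l(k) = \bigcup_{\vartheta<\lambda} l(k \cap V_\vartheta)$ one reads off $l(k) \cap V_{l(\theta)} = l(k \cap V_\theta)$. Since the (bounded) statement ``$(k\cap V_\theta)(x) = k(x)$'' holds in $V_\lambda$, applying the elementary embedding $l$ gives ``$(l(k\cap V_\theta))(l(x)) = l(k(x))$'', i.e. $l(k)(l(x)) = l(k(x))$. Fact (i) is already contained in Lemma~\ref{lem:l_k_application} and is proved the same way: pick $\theta$ with $k(\theta_0) \in V_\theta$, note that ``$(k\cap V_\theta)(\alpha)=\alpha$ for all $\alpha<\theta_0$, and $(k\cap V_\theta)(\theta_0)\neq\theta_0$'' is a bounded statement true in $V_\lambda$, and apply elementarity of $l$ with $k\cap V_\theta$, $\theta_0$ replaced by $l(k\cap V_\theta) = l(k)\cap V_{l(\theta)}$, $l(\theta_0)$; this yields $\crit(l(k)) = l(\theta_0)$. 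Combining: applying (ii) with $x = \theta_{n'}$ gives $l(k)(l(\theta_{n'})) = l(k(\theta_{n'})) = l(\theta_{n'+m})$ for all $n' \le n-m$, which together with (i) is exactly ``$l(k) : (m)\, l(\theta_0) \mapsto l(\theta_1) \mapsto \dots \mapsto l(\theta_n)$''.

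There is no real obstacle here — the lemma is flagged as straightforward. The only points needing a little care are the identity $l(k)\cap V_{l(\theta)} = l(k\cap V_\theta)$ (immediate from the union definition of application plus elementarity) and writing the critical-point condition as a bounded first-order formula so that elementarity of $l$ applies verbatim; passing from step length $1$ to step length $m$ is purely cosmetic.
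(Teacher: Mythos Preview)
Your proposal is correct; the paper does not actually give a proof of this lemma, merely flagging it as ``straightforward'' and as a generalization of Lemma~\ref{lem:l_k_application}. Your argument via the two standard facts $\crit(l(k))=l(\crit(k))$ and $l(k)(l(x))=l(k(x))$ is exactly the intended justification, and your observation that the step-length parameter $m$ only shifts indices is precisely why the paper treats this as an immediate extension of the $m=1$ case.
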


\begin{definition}
    A \textit{basic Laver system} (bls for short), consists of the following data: 
    \begin{itemize}
      \item a blp $p$,
      \item a positive integer $m$,
      \item a sequence of ordinals $\theta'_0,...,\theta'_{m}$,
      \item a sequence of embeddings $l_1,l_2,...,l_{m}$,
      \item a sequence of ordinals $\theta_0,\theta_1,...,\theta_{n+1}$ (where $n=p.n$ is the length of $p$), and
      \item embeddings $j_1,...,j_{n}\in \mathrm{Iter}(j)$,
    \end{itemize}
    such that:
    \begin{enumerate}
      \item $\theta'_0,...,\theta'_{m}$ and $l_1,l_2,...,l_{m}$ witness that $\theta_0,\theta_1,\theta_2$ are $m$-lined,
      \item For each $i\ge 1$, $j_i:(\ell_i)\theta_{s_{i,1}}\mapsto \theta_{s_{i,2}} \mapsto \dots \mapsto\theta_{s_{i,|s_i|}}$. In particular, $\mathrm{crit}(j_i)=\theta_{s_{i,1}}$.
    \end{enumerate}
\end{definition}

When $P$ is a bls, we use $P.p$ to denote its blp $p$. We say that $P$ is a \emph{realization} of $P.p$. We use the pyhton-like notation will all components of a bls, for example, we write $P.j_{2}, P.\theta'_1, P.l_1'$ to denote $j_2,\theta'_1,l'_1$ of $P$. We inherit properties from~$p$, for example. $P.p.s_2$ denotes the $s_2$ of the blp~$p$ that~$P$ realises. However, we often omit details if they are clear from context, so we can write $P.s_2$, or just~$s_2$, $\theta'_1$, etc.

\begin{definition}
  Let $p$ be a blp with length $p.n\ge 3$. 
  \begin{itemize}
    \item We define the blp $p.del$ by deleting the last row of $p$.
    \item If $P$ is a bls of length $n\ge 3$, we define $P.del$ to be the realization of $p.del$ obtained from~$P$ by deleting the last cardinal $\theta_{n+1}$ and the last embedding $j_n$.
  \end{itemize}
  When $2\le n'\le n$, we define $P|n'$ by applying $.del$ to $P$ for $n-n'$ times (intuitively, only keep the first $n'$ rows).
\end{definition}

\begin{lemma} \label{lem:moving_any_theta_i_to_theta_j}
  For any bls $P$, for any $0\le i<i'\le P.n+1$, there is $l\in \mathrm{Iter}(j)$ such that $l:(1)P.\theta_i \mapsto P.\theta_{i'}$.
\end{lemma}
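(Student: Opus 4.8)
Below I write $\theta_i, j_i, s_i, \ell_i, n$ for the corresponding components of the bls $P$. The plan is to prove the statement by induction on $i'$: for $1\le i'\le n+1$, let $Q(i')$ be the assertion that for every $i$ with $0\le i<i'$ there is $l\in\Iter(j)$ with $l\colon(1)\theta_i\mapsto\theta_{i'}$, i.e.\ $\crit(l)=\theta_i$ and $l(\theta_i)=\theta_{i'}$; the lemma is the conjunction of the $Q(i')$. The base case $Q(1)$ comes for free: since $s_1=(0,1,2)$ and $\ell_1=1$, clause~(2) of the definition of a bls already supplies $j_1\colon(1)\theta_0\mapsto\theta_1\mapsto\theta_2$, so $l=j_1$ witnesses $Q(1)$.

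The main structural step is a reduction: it suffices to prove, for each $1\le i\le n$, the \emph{single-step} statement $T(i)$ that there is $l^*\in\Iter(j)$ with $l^*\colon(1)\theta_i\mapsto\theta_{i+1}$. Indeed, for $i'\ge 2$ one obtains $Q(i')$ from $Q(i'-1)$ together with $T(i'-1)$: the case $i=i'-1$ of $Q(i')$ is $T(i'-1)$ itself, while for $i<i'-1$ one takes $l\colon(1)\theta_i\mapsto\theta_{i'-1}$ from $Q(i'-1)$ and forms the composite $l^*\circ l$ (permissible as $\Iter(j)$ is closed under composition); since $\crit(l)=\theta_i<\theta_{i'-1}=\crit(l^*)$, both maps fix every ordinal below $\theta_i$, whereas $(l^*\circ l)(\theta_i)=l^*(\theta_{i'-1})=\theta_{i'}$, so $l^*\circ l\colon(1)\theta_i\mapsto\theta_{i'}$. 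Hence the induction collapses to establishing each $T(i)$, where I may use $Q(i'')$ for all $i''\le i$.

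To prove $T(i)$ I would use row $i$ of $P$. By the bls conditions, $j_i\colon(\ell_i)\,\theta_{s_{i,1}}\mapsto\theta_{s_{i,2}}\mapsto\dots\mapsto\theta_{s_{i,|s_i|}}$ with $s_{i,|s_i|-1}=i$ and $s_{i,|s_i|}=i+1$; unwinding the notation, $j_i(\theta_{s_{i,a}})=\theta_{s_{i,a+\ell_i}}$ for $1\le a\le|s_i|-\ell_i$. Set $b=|s_i|-\ell_i$. The constraints on step lengths force $1\le\ell_i\le|s_i|/2$, hence $2\le b\le|s_i|-1$, so the relation applies at $a=b-1$ and $a=b$ and yields $j_i(\theta_{s_{i,b-1}})=\theta_{s_{i,|s_i|-1}}=\theta_i$ and $j_i(\theta_{s_{i,b}})=\theta_{s_{i,|s_i|}}=\theta_{i+1}$. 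Since $s_i$ is strictly increasing and $b\le|s_i|-1$, we have $s_{i,b-1}<s_{i,b}\le s_{i,|s_i|-1}=i$, so $Q(s_{i,b})$ is one of the instances already at hand; it provides $g\in\Iter(j)$ with $g\colon(1)\theta_{s_{i,b-1}}\mapsto\theta_{s_{i,b}}$. Applying Lemma~\ref{lem:l_k_application} with the embedding $j_i$ to $g$ gives $j_i(g)\colon(1)\,j_i(\theta_{s_{i,b-1}})\mapsto j_i(\theta_{s_{i,b}})$, that is, $j_i(g)\colon(1)\theta_i\mapsto\theta_{i+1}$; so $l^*=j_i(g)$ witnesses $T(i)$.

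I do not expect a genuinely hard step; the one point demanding care is the index bookkeeping in the last paragraph. One must check $b\ge 2$, so that $\theta_{s_{i,b-1}}$ really appears in the chain written for $j_i$ — this is exactly where the bound $\ell_i\le|s_i|/2$ is used — and $s_{i,b}\le i<i'$, so that the instance $Q(s_{i,b})$ being invoked has already been proved rather than being the one under construction — this is where $\ell_i\ge 1$, forcing $b\le|s_i|-1$, enters. Conceptually the lemma is light: a single embedding $j_i$ from row $i$, pre-composed via the application operation with a shorter move handed over by the induction, already advances $\theta_i$ by one step to $\theta_{i+1}$, and plain composition then splices these one-step moves into arbitrary moves $\theta_i\mapsto\theta_{i'}$. (In particular the $m$-lined part of the bls data plays no role.)
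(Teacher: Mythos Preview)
Your proof is correct and follows essentially the same strategy as the paper: reduce to the single-step case $\theta_i\mapsto\theta_{i+1}$, then use row~$i$ to find indices $u=s_{i,b-1}$, $v=s_{i,b}$ with $j_i(\theta_u)=\theta_i$ and $j_i(\theta_v)=\theta_{i+1}$, pull an embedding $g:(1)\theta_u\mapsto\theta_v$ from the induction hypothesis, and apply $j_i$ to it. The only cosmetic differences are that the paper inducts on the length~$n$ (invoking $P.del$) rather than on~$i'$, and that for the transitivity step the paper uses application $k'(k)$ instead of your composition $l^*\circ l$; both are valid since $\Iter(j)$ is closed under each operation.
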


\begin{proof}
  We use induction on $n$. Consider $P.del$; we only need to consider the case $i'=n+1$. If $k:(1)\xi_1\mapsto\xi_2$ and $k':(1)\xi_2\mapsto\xi_3$, then $k'(k):(1)\xi_1\mapsto\xi_3$. By the induction hypothesis, we only need to consider the case $i=n$. Assume $j_{n}$ maps $\theta_u,\theta_v$ to  $\theta_n$, $\theta_{n+1}$ (We have $n=s_{n,-2}$, $n+1=s_{n,-1}$, so we only need to take $u=s_{n,-2-\ell_n}$, $v=s_{n,-1-\ell_n}$). Applying $j_{n}$ to an embedding $k:(1)\theta_{u}\mapsto\theta_{v}$ (which exists by the induction hypothesis) yields the desired result.
\end{proof}

\subsubsection*{The copying operation}

Now we define the copying operation for a blp. Notice that in the diagram of a blp, in the $k$-th row, the last two circles are in the $(k+1)$-th and $(k+2)$-th columns. This gives the diagram a staircase shape. Intuitively, the copying operation tries to choose the set of rows to which the last row applies (by choosing $a,b$ in the later definition), and replaces the last row with these applications, so that we can keep this staircase shape. %Sometimes these rows are colored \textcolor{TianyiBlue}{\#66CCFF} in this paper.

We need a little operation for the definition of copying. Given two increasing finite sequences $s,t$ of natural numbers (as usual, indexes start from 1), and a positive integer $\ell$, we define a partial map $ap(s,t,\ell)$ which outputs an increasing natural number sequence. Intuitively, the resulting sequence realizes the embedding that comes from applying the $t$-embedding to the $s$-embedding, where $\ell$ is the step length of the $t$ row. 

There are cases in which no application is possible. The function $ap(s,t,\ell)$ is only defined if:
\begin{itemize}
  \item  $t_{-\ell-2}>0$ and $|t|\ge \ell+2$;
  \item every entry~$x$ of~$s$ satisfies $x\le t_{-\ell-1}$;
  \item for every entry~$x$ of~$s$, if $t_1\le x<t_{-\ell-2}$ then~$x$ is an entry of~$t$. 
\end{itemize}
When these conditions hold, $ap(s,t,\ell)$ is the sequence of length $|s|$ defined as follows:
\begin{itemize}
  \item if $s_i<t_1$, then $ap(s,t,\ell)_i=s_i$;
  \item   if $s_i=t_j<t_{-\ell-2}$, then $ap(s,t,\ell)_i=t_{j+\ell}$; 
  \item if $s_i\ge t_{-\ell-2}$, then $ap(s,t,\ell)_i=t_{-2}+s_i-t_{-\ell-2}$.
\end{itemize}

For example, if $s = (1,2,3,7,8)$ and $t = (2,3,6,8,9,10)$, then $ap(s,t,2) = (1,6,8,10,11)$. On the other hand, if $s' = (1,2,3,4)$ then $ap(s',t,2)$ is undefined, because there ``isn't enough room'' for~$\theta_4$ to go to.

\begin{definition}
    Let $p$ be a blp. We define the copyability of $p$, and the blp $p.Copied$ when $p$ is copyable. Let $n = p.n$ be the length of~$p$. If $n=2$, then $p$ is not copyable.

    Let $a = s_{n, - \ell_n - 2}$ and $b = s_{n, - \ell_n - 1} - 1$. Then $q = p.Copied$ will be the blp we obtain from~$p$ by replacing the last row $s_n$ of~$p$ by its application to all lines between~$a$ and~$b$ (inclusive); the numbers~$a$ and~$b$ are chosen so that the resulting shape is the desired staircase. In detail: $p.Copied$ is the blp~$q$ defined by:

    \begin{itemize}
      \item $q|n-1=p.del$ (we keep the first $n-1$ rows);
      \item the length $q.n$ of~$q$ is $n+b-a$;
      \item For $i=0,1,\dots, b-a$, $q.\ell_{n+i}=\ell_{a+i}$ and $q.s_{n+i}=ap(s_{a+i},s_{n},\ell_n)$. 
    \end{itemize}
    If~$ap$ is undefined on any of the triples above, then $p$ is not copyable. 
\end{definition}

We observe that the requirements on the application operation ensure that when $p$ is copyable, then $p.Copied$ is indeed a blp. 

\begin{remark} \label{rmk:applying_a_row_of_length_3}
  If the last row $s_n$ of~$p$ has length~3, and its first entry $s_{n,1}$ is nonzero, then $p$ is copyable, as $a = s_{n,1}$ (the step length~$\ell_n$ is necessarily~1). Since $s_{n,2} = n$ in this case, the rows to be copied would be $a,a+1,\dots, n-1$. The effect on each row would be to preserve the entries smaller than~$a$, and increse the other entries by $n-a$. 
\end{remark}

\medskip

We illustrate $p.Copied$ with two examples. The beautiful color \textcolor{TianyiBlue}{\#66CCFF} is used to highlight the rows that will be copied in the copying operation.

If $p$ is as in Figure~\ref{fig:copy_ex1_p}, then $p.Copied$ is as in Figure~\ref{fig:copy_ex1_copied}.

\drawColoredSequence{0.1}{0.5}{0.5}{0,1,2,-1,0,1,2,3,-1,0,1,2,3,4,-2,0,1,2,3,4,5,-3}[A copyable blp $p$.][fig:copy_ex1_p]

\drawColoredSequence{0.1}{0.5}{0.5}{0,1,2,-1,0,1,2,3,-1,0,1,2,3,4,-2,3,4,5,-1}[The resulting blp $p.Copied$.][fig:copy_ex1_copied]

If $p$ is as in Figure~\ref{fig:copy_ex2_p}, then $p.Copied$ is as in Figure~\ref{fig:copy_ex2_copied}.

\drawColoredSequence{0.1}{0.5}{0.5}{0,1,2,-1,0,1,2,3,-1,0,1,2,3,4,-2,0,1,2,3,4,5,-2,2,3,5,6,-1}[Another copyable blp $p$.][fig:copy_ex2_p]

\drawSequence{0.1}{0.5}{0.5}{0,1,2,-1,0,1,2,3,-1,0,1,2,3,4,-2,0,1,2,3,4,5,-2,0,1,3,5,6,-2,0,1,3,5,6,7,-2}[The resulting blp $p.Copied$.][fig:copy_ex2_copied]

However, the blp $p.Copied$ in Fig.~\ref{fig:copy_ex2_copied} is not copyable. Intuitively, this is because we don't know where to put the circles in the third column after moving.

\medskip

If $P$ is a bls that realizes a copyable blp $p = P.p$, then we define  $P.Copied$ to be the unique bls~$Q$ that realizes $q=p.Copied$ (i.e., $Q.p=q$), and the embedding sequence of $Q$ is $j_1,j_2,...,j_{n-1},j_n(j_a),j_n(j_{a+1}),...,j_n(j_{b})$, where $a,b$ are as above, and the witnesses of $m$-linedness remains unchanged. Note that the ``last cardinal'' $P.\theta_{n+1} = P.\theta_{-1}$ of~$P$ is the same as the last cardinal $Q.\theta_{n+b-a+1} = Q.\theta_{-1}$ of $Q$, whereas the second-to-last cardinal $P.\theta_n$ of~$P$ is the same as $Q.\theta_n$, indeed $P.\theta_i = Q.\theta_i$ for $i=0,1,\dots, n$;  the new cardinals $Q.\theta_{n+1}$, $Q.\theta_{n+2}$, \dots, $Q.\theta_{n+b-a}$ are all between $P.\theta_n$ and $P.\theta_{n+1}$. 

We record a fact that we will use later:

\begin{remark} \label{rmk:last_embedding_of_P_copied}
  Suppose that~$P$ is a copyable bls (that is, it realizes a copyable blp $p = P.p$). Then:
  \begin{enumerate}
    \item $P.Copied.\theta_{-1} = P.\theta_{-1}$ (the last cardinals of both bls's are the same); 
    \item The last embedding $P.Copied.j_{-1}$ of $P.Copied$ is the result of applying $P.j_{-1}$ (the last embedding of~$P$) to an embedding $k\colon V_\lambda\to V_\lambda$. 
  \end{enumerate}
\end{remark}

\subsubsection*{Limit types, successor types, and predecessors}

\begin{definition} \label{def:limit_and_successor_types} 
Let~$p$ be a blp; let $n = p.n$ be the length of~$p$.
\begin{enumerate}
  \item We say that $p$ is of \emph{limit type} if the first three entries of the last row $s_n$ of~$p$ are $0,1,2$, $|s_n|=6$, and the step length $\ell_n$ of the last row is~3.

  \item We say that $p$ is of \emph{successor type} if the first three entries of $s_n$ are $0,1,2$, and $|s_n|=5$ (so $s_n = (0,1,2,n,n+1)$, and $\ell_n=2$).

  \item The \emph{zero blp} is the unique blp of length~2. 

  \item If~$p$ is neither of limit type nor of successor type, and is not the zero blp, then we say that~$p$ is of \emph{transient type}.
\end{enumerate}
\end{definition}

We define an operation taking a blp~$p$ of limit or successor type, and an integer~$m$, to a blp $p.E(m)$. Very roughly, if~$p$ is of limit type, then $p.E(m)$ will correspond to the $m$th ordinal in a fundamental sequence for the ordinal represented by~$p$. This is very imprecise intuition though. 

For the following definition, observe that if $p$ is of successor type, then $s_{n,-3} = 2$ (recall that $s_{n,-3}$ is the third-to-last entry of the last row~$s_n$) ; if $p$ is of limit type, then $2<s_{n,-3}<n$. Letting $a = s_{n,-3}$, what we will soon use is that in either case, 
if~$P$ is a realization of~$p$, then $j_n(\theta_0) = \theta_a$, $j_n(\theta_1) = \theta_n$, and $j_n(\theta_2) = \theta_{n+1}$. 

\begin{definition} \label{def:operation_E}
  Suppose that~$p$ is a blp of either limit or successor type, of length~$n$.  Let $a=s_{n,-3}$. We define $p.E(m)$ by recursion on~$m$. 
  \begin{itemize}
    \item $p.E(0) = p.del$.

    \item Suppose that $p.E(m)$ has been defined, and has length~$n$. Let $q_{m+1}$ be the blp obtained from $p.E(m)$ by adding one row, $(a,n'+1,n'+2)$ where $n' = p.E(m).n$ is the length of~$p.E(m)$; the step length is necessarily~1. We let $p.E(m+1) = q_{m+1}.Copied$.\footnote{Note that by Remark~\ref{rmk:applying_a_row_of_length_3}, $p.E(m)$ is always well-defined.}
  \end{itemize}
\end{definition}

For example, if $p$ is as in Figure~\ref{fig:e_op_p}, then $p.E(2)$ is as in Figure~\ref{fig:e_op_result}. 

\drawSequence{0.1}{0.5}{0.5}{0,1,2,-1,0,1,2,3,-1,0,1,2,3,4,-2,0,1,2,3,4,5,-3}[A blp $p$ for demonstrating the $E(m)$ operation; $a=3$.][fig:e_op_p]

\drawSequence{0.1}{0.5}{0.5}{0,1,2,-1,0,1,2,3,-1,0,1,2,3,4,-2,0,1,2,4,5,-2,0,1,2,5,6,-2,0,1,2,6,7,-2}[The resulting blp $p.E(2)$.][fig:e_op_result]

\begin{remark} \label{rmk:the_length_of_the_E_blp}
   Let $n' = p.E(m).n$ be the length of $p.E(m)$. The rows copied when passing from $q_{m+1}$ to $p.E(m+1)$ are rowss $a$, $a+1$, \dots, $n'$ of $p.E(m)$ (the last $n'-a+1$ rows of $p.E(m)$), so the length of $p.E(m+1)$ is $2n' -a +1$. By induction, we see that the length of $p.E(m)$ is $2^m(n-a)+ (a-1)$. 
\end{remark}

\medskip

Suppose that $p$ is a blp of successor or limit type; let~$P$ be a realization of~$p$. Again let $a = p.s_{n,-3}$. Let $m = P.m$; let $l_i = P.l_i$ so $l_1$, $l_2$,\dots, $l_m$ are the witnesses that $\theta_0,\theta_1,\theta_2$ are $m$-lined (with cardinals $\theta_1 = \theta_0'<\theta_1'<\cdots <\theta_m'= \theta_2$). For $i=0,\dots, m$ let $\mu_i = j_n(\theta_i')$. As mentioned above, since $j_n(\theta_0) = \theta_a$, $j_n(\theta_1) = \theta_n$, and $j_n(\theta_2) = \theta_{n+1}$, so $\theta_n = \mu_0 <\mu_1 < \cdots <\mu_m = \theta_{n+1}$; and for $i=1,\dots, m$, 
\[
  j_n(l_i) \colon \theta_a \mapsto \mu_{i-1}\mapsto \mu_i. 
\]

This enables the following definition. 

\begin{definition} \label{def:E_operation_on_bls}
  Suppose that $p$ is a blp of successor or limit type; let~$P$ be a realization of~$p$. Let $m = P.m$, $n = p.n$,  and $l_1,\dots, l_m$, $\mu_0,\dots, \mu_m$ as discussed above. 

  We define sequences $P.E(0),P.E(1),\dots, P.E(m)$ and $Q_1,Q_2,\dots, Q_m$ of bls's with~$P.E(i)$ realizing $p.E(i)$ and $Q_i$ realizing the blp~$q_i$ of Definition \ref{def:operation_E}. We ensure that the last cardinal $P.E(i).\theta_{-1}$ of $P.E(i)$ is~$\mu_i$. The definition is as follows:
  \begin{itemize}
    \item $P.E(0) = P.del$.
    \item Given $P.E(i)$, we let $Q_{i+1}$ be the realization of~$q_{i+1}$ where the last embedding $Q_{i+1}.j_{-1}$ is $j_n(l_{i+1})$. 
    \item We let $P.E(i+1) = Q_{i+1}.Copied$. 
  \end{itemize}
  Finally we let $P.E = P.E(m)$. 
\end{definition}

Note that indeed, since the last cardinal of $P.E(i)$ is~$\mu_i$, the last cardinal of $Q_{i+1}$ and of $P.E(i+1)$ will be $\mu_{i+1}$, as $j_n(l_{i+1})$ maps $\mu_i$ to $\mu_{i+1}$. 

When $i>0$, since the last embedding $Q_i.j_{-1}$ of $Q_i$ is $j_n(l_i)$ (with critical point~$\theta_a$), so the last emebdding $P.E(i).j_{-1}$ of $P.E(i)$ is the result of applying $j_n(l_i)$ to the last embedding of $P.E({i-1})$. So the last embedding of $P.E$ is 
\[
  j_n(l_m) \Big( j_n(l_{m-1}) \Big( \cdots \Big(  j_n(l_1)(j_{n-1})    \Big) \cdots  \Big)  \Big).
\]

We record facts that we will use later.

\begin{lemma} \label{lem:facts_about_P.E}
  Let~$P$ be a realization of a blp of limit or successor type. 
  \begin{enumerate}
    \item $P.E.\theta_{-1} = P.\theta_{-1}$, that is, the last cardinals of~$P$ and of $P.E$ are the same. 

    \item There are elementary embeddings $k_0$ and~$k_1$ such that $\crit P.j_{-1}(k_0) < P.\theta_{-1}$ and
    \[
      P.E.j_{-1}  = (P.j_{-1}(k_0))(k_1).
    \]
    That is, the last embedding of~$P.E$ is obtained from the last embedding of~$P$ by twice applying to some embedding, and the critical point of the intermediate embedding as well has critical point below the last cardinal of~$P$. 
  \end{enumerate}
\end{lemma}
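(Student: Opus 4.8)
The plan is to read both statements off the recursion in Definition~\ref{def:E_operation_on_bls}, tracking the last cardinal and the last embedding of $P.E(i)$ as $i$ runs from $0$ to $m = P.m$. The inputs I will use, all already recorded above, are: for $p$ of limit or successor type, $j_n(\theta_0) = \theta_a$, $j_n(\theta_1) = \theta_n$, and $j_n(\theta_2) = \theta_{n+1}$ (where $a = s_{n,-3}$); the embeddings $l_1,\dots,l_m = P.l_1,\dots,P.l_m$ witnessing $m$-linedness have $\crit(l_i) = \theta_0$ and $l_i\colon\theta_1 = \theta_0'\mapsto\theta_{i-1}'\mapsto\theta_i'$; and $\mu_i := j_n(\theta_i')$, so that $j_n(l_i)\colon\theta_a\mapsto\mu_{i-1}\mapsto\mu_i$ by Lemma~\ref{lem:lk_application_general}.

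For part (1), I will prove by induction on $i$ that $P.E(i).\theta_{-1} = \mu_i$ (this is the remark stated right after Definition~\ref{def:E_operation_on_bls}). The base case is $P.E(0) = P.del$, whose last cardinal is $\theta_n = j_n(\theta_1) = \mu_0$. For the inductive step, $Q_{i+1}$ is obtained from $P.E(i)$ by appending one row with last embedding $j_n(l_{i+1})$, which appends a single new cardinal, namely the image of the current last cardinal $\mu_i = j_n(\theta_i')$ under $j_n(l_{i+1})$, and that image is $j_n(\theta_{i+1}') = \mu_{i+1}$; so $Q_{i+1}.\theta_{-1} = \mu_{i+1}$, and then Remark~\ref{rmk:last_embedding_of_P_copied}(1) gives $P.E(i+1).\theta_{-1} = Q_{i+1}.Copied.\theta_{-1} = Q_{i+1}.\theta_{-1} = \mu_{i+1}$. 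Taking $i = m$, $P.E.\theta_{-1} = \mu_m = j_n(\theta_2) = \theta_{n+1} = P.\theta_{-1}$.

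For part (2), the point is that $m \geq 1$, so $P.E = P.E(m) = Q_m.Copied$, and the last embedding of $Q_m$ is, by construction, $Q_m.j_{-1} = j_n(l_m)$. Since $q_m$ has a last row of length $3$ whose first entry $a$ is nonzero, it is copyable (Remark~\ref{rmk:applying_a_row_of_length_3}), so Remark~\ref{rmk:last_embedding_of_P_copied}(2) applies to $Q_m$ and yields an embedding $k_1$ with $P.E.j_{-1} = (Q_m.j_{-1})(k_1) = (j_n(l_m))(k_1)$. Now $P.j_{-1} = j_n$ (the last embedding of $P$, of length $n$) and $P.\theta_{-1} = \theta_{n+1}$, so I set $k_0 = l_m$: by Lemma~\ref{lem:lk_application_general}, $\crit(P.j_{-1}(k_0)) = \crit(j_n(l_m)) = j_n(\crit(l_m)) = j_n(\theta_0) = \theta_a$, and since $a = s_{n,-3} < n$ ($a = 2$ in the successor case, $2 < a < n$ in the limit case), we get $\theta_a < \theta_{n+1} = P.\theta_{-1}$, as required.

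The arguments are short, so the main thing requiring care is bookkeeping: confirming that each intermediate $Q_{i+1}$ is a genuine bls and each $q_{i+1}$ a copyable blp — which rests on $a > 0$ together with Remark~\ref{rmk:applying_a_row_of_length_3} and the observation that copying a copyable blp yields a blp — and keeping the ``last embedding/cardinal'' of $P$ distinct from that of $Q_m$. I do not anticipate a genuine obstacle: both parts just unwind the construction, the crucial numerical coincidence ($j_n$ sending $\theta_0,\theta_1,\theta_2$ to $\theta_a,\theta_n,\theta_{n+1}$) having been arranged before the definition.
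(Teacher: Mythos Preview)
Your proposal is correct and is essentially the argument the paper intends: the paper does not give a separate proof of this lemma, but the discussion immediately preceding it records exactly the two facts you use---that $P.E(i).\theta_{-1}=\mu_i$ (so $P.E.\theta_{-1}=\mu_m=\theta_{n+1}$) and that the last embedding of $P.E$ is $j_n(l_m)(\cdots)$, from which $k_0=l_m$ and the existence of $k_1$ follow via Remark~\ref{rmk:last_embedding_of_P_copied}(2). One trivial slip: in your parenthetical you wrote ``$l_i\colon\theta_1=\theta_0'\mapsto\theta_{i-1}'\mapsto\theta_i'$'', but the critical point of $l_i$ is $\theta_0$, not $\theta_1$; you state this correctly elsewhere and the argument is unaffected.
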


% We also observe that 

% When $P$ is a bls, we take the unique $P.E$ such that $P.E.p=P.p.E(m)$. Assuming the witness embeddings of $m$-linedness are $l_1,...,l_m$, we take $j_{-1}$ of the bls with respect to $p.E'(k)$ as $j_n(l_k)$.

% Note that $P.E.j_{-1}=P.j_{-1}(P.l_{-1})(P.E(m-1).j_{-1})$, and $\mathrm{crit}(P.j_{-1}(P.l_{-1}))=P.\theta_{s_{-1,-3}}<P.\theta_{-1}$.

\subsubsection*{The completion and modification operations}

Now we define the completion operation for a suitable row of a blp. Recall that a row~$s_i$ is \emph{suitable} if $|s_i|\ge 5$ is odd. Note that when $s_i$ is suitable, a realizing embedding mapps the first entry $s_{i,1}$ to the middle entry $s_{i,\ell_i+1}$, and that middle entry to the last entry $s_{i,-1} = i+1$. The second-to-middle entry $s_{i,\ell_i}$ is mapped to the second-to-last entry $s_{i,-2} = i$.

Let $s_i$ be a suitable row. Suppose that $s_{i,\ell_i}< s_{i,\ell_i+1}$. If $s_{i,\ell_i} < x < s_{i,\ell_i+1}$ then a corresponding embedding will map $\theta_x$ to a cardinal strictly between $\theta_{i}$ and $\theta_{i+1}$, i.e., between the last two entries. We want to enrich the row by filling in the ``missing circles'' between~$i$ and $i+1$, that are currently not named by any cardinal. In order to keep the staircase shape, we need to add several rows. Say that we want to name~$u$ many new cardinals between $\theta_i$ and~$\theta_{i+1}$. So $\theta_{i+1}$ needs to be renamed to $\theta_{i+u+1}$, and in general, for $j>i$, the old $\theta_j$ becomes the new $\theta_{j+u}$. So we need to replace row~$s_i$ by $u+1$ many rows that ``interpolate'' the passage from $\theta_i$ to $\theta_{i+1}$ through the new cardinals. The new cardinals will be named $\theta_{i+1}$, \dots, $\theta_{i+u}$.

\begin{definition} \label{def:full_comp}
    Let $s_i$ be a suitable row of $p$. 
    %Let $S$ be the set of integers that are strictly between $s_{i, \ell_i}$ and $s_{i, \ell_i + 1}$.
    
    \begin{enumerate}
    \item  Let $T = \{t_1, t_2, \dots, t_u\}$ be a set of integers satisfying 
    \[
     s_{i,\ell_i} < t_1 < t_2 < \cdots < t_u < s_{i,\ell_i+1}. 
    \]
    We define a blp $q=\mathrm{comp}(p, i, T)$ as follows. 
    \begin{itemize}
      \item The length $q.n$ of~$q$ is $n+u$. 

      \item $q | (i-1) = p|(i-1)$, i.e., for all $i'<i$, the $i'$th row of~$q$ is the same as the $i'$th row of~$p$, with the same step length $\ell_{i'}$. 

      \item For $r=0,1,\dots, u-1$, the $(i+r)$th row $q.s_{i+r}$ of~$q$ is obtained from $s_i$ by inserting $t_1,t_2,\dots, t_r, t_{r+1}$ between $s_{i,\ell_i}$ and $s_{i,\ell_i+1}$, and replacing the last entry~$i+1$ by the sequence $i+1$, $i+2$, \dots, $i+r$, $i+r+1$.  The step length of this row is $q.\ell_{i+r} = p.\ell_i+ r+1$.

      Note that these rows have even length: $|q.s_{i+r}| = |s_i| + 2r+1$. 

      \item We let $q.s_{i+u}$ be the row obtained from~$s_i$ by inserting $t_1,\dots, t_u$ between $s_{i,\ell_i}$ and $s_{i,\ell_i+1}$, and replacing the last entry~$i+1$ by the sequence $i+1$, $i+2$, \dots, $i+u+1$. The step length is $q.\ell_{i+u} = \ell_i+u$. 

      Note that $|q.s_{i+u}| = |s_i|+2u$ is odd. 

      \item For each~$i'$ with $i<i'\le p.n$, the $(i'+u)$th row $q.s_{i'+u}$ of~$q$ is obtained from $p.s_{i'}$ by replacing any entry $x>i$  by $x+u$ (the smaller entries remain unchanged). The step length is $q.\ell_{i'+u} = p.\ell_{i'}$. 
    \end{itemize}

    Note that if $T = \varnothing$, then $\mathrm{comp}(p, i, T) = p$.

     % $q.n=n+u$, for each $i'< i$, $q.s_{i'}=s_{i'}, q.\ell_{i'}=\ell_{i'}$. For each $i< i'\le n$, $q.s_{i'+u}=m''(s_{i'}),q.\ell_{i'}=\ell_{i'+u}$, where $m(x)=x,x\le i, m(x)=x+u,x>i$, and $q.s_{i+u}$ comes from inserting $i+1,i+2,...,i+u$ and $T$ into $m''(s_{i})$, $q.\ell_{i+u}=\ell_{i}+u$. For each $0<i'\le u$, $q.s_{i-1+i'}$ comes from $s_{i}$ by inserting $t_1,t_2,...,t_{i'},i+2,i+3,...,i+i'$, with step length $\ell_i+i'$.

     \item Define a sequence $x_0,x_1,\dots$ as follows: 
     \begin{itemize}
       \item $x_0 = s_{i,\ell_i+1}$; 
       \item $x_{r+1}= s_{x_r,-3}$. Note that $x_{r+1}< x_r$ (since $s_{x_r,-2} = x_r$). 
       \item Halt when $x_k\le s_{i,\ell_i}$. 
     \end{itemize}

     Let $T = \{x_{k-1},x_{k-2},\dots, x_1\}$. We define $\mathrm{fullcomp}(p, i) = \mathrm{comp}(p, i, T)$. 
     %, where $T=\{x_1,x_2,...,x_{k-1}\}$, where $x_0=s_{i, \ell_i + 1}$, $x_{\ell}=s_{x_{\ell-1},-3}$, $x_{k-1}>s_{i, \ell_i}\ge x_k$.
    \end{enumerate}
\end{definition}

We give a simple example. Let~$p$ be the blp described in Figure~\ref{fig:comp_p}; let $i=5$. Note that $|s_i|= 5$ so~$s_i$ is indeed suitable. We have $s_{i,\ell_i} = 1$ and $s_{i,\ell_i+1} = 3$. Since $s_{3,-3} = 2$, we have $x_1 = 2$, so $\mathrm{fullcomp}(p, 5) = \mathrm{comp}(p,5, \{2\})$ is obtained by replacing~$s_5$ by two new rows: in the first, we add the entry~2; in the second we add the entries~2 and $7$. We update the 6th row (the new 7th row) by replacing the entries 6, 7 by 7,8. The result is illustrated in Figure~\ref{fig:comp_result}. If, on the other hand, the 3rd row of~$p$ omitted the entry 2, then we would have $\mathrm{fullcomp}(p,5)= p$. 

\drawSequence{0.1}{0.5}{0.5}{0,1,2,-1,0,1,2,3,-1,0,1,2,3,4,-2,0,1,2,3,4,5,-2,0,1,3,5,6,-2,0,1,3,5,6,7,-2}[A blp $p$ for demonstrating the `fullcomp` operation.][fig:comp_p]

\drawSequence{0.1}{0.5}{0.5}{0,1,2,-1,0,1,2,3,-1,0,1,2,3,4,-2,0,1,2,3,4,5,-2,0,1,2,3,5,6,-3,0,1,2,3,5,6,7,-3,0,1,3,5,7,8,-2}[The resulting blp $\mathrm{fullcomp}(p, 5)$.][fig:comp_result]

There are no difficulties in defining a completion of a bls; after all, the operation just shifts indices so that we can name some new intermediate cardinals; we do not need any new embeddings.

\begin{definition} \label{def:fullcomp_for_bls}
  Let~$P$ be a realization of a blp~$p$; let $s_i$ be a suitable row of~$p$. Let $T = \{t_1< t_2< \cdots <t_u\}$ be a set of integers between $s_{i,\ell_i}$ and $s_{i,\ell_i+1}$. Let $q = \mathrm{comp}(p,i,T)$. We let $\mathrm{comp}(P,i,T)$ be the bls~$Q$ defined as follows. Let $n = p.n$ be the length of~$p$. Let $\theta_0, \dots, \theta_{n+1}$ be the cardinals of~$P$. 
  \begin{itemize}
    \item The cardinals of~$Q$ are 
     \[
     \theta_0,\theta_1,\dots, \theta_i, j_i(\theta_{t_1}), j_i(\theta_{t_2}), \dots, j_i(\theta_{t_u}), \theta_{i+1}, \theta_{i+2},\dots, \theta_{n+1}. 
     \]
     (where $j_i = P.j_i$ is the $i$th embedding of~$P$). 

     \item $Q.m = P.m$ and the witnesses for $m$-linedness are unchanged. 

     \item \begin{itemize}
       \item For $i'<i$, $Q.j_{i'} = P.j_{i'}$. 
       \item For $r = 0,1,\dots, u$, $Q.j_{i+r} = P.j_i$. 
       \item For $i' = i+1,\dots, n$, $Q.j_{i'+u} = P.j_{i'}$. 
     \end{itemize}
  \end{itemize}
\end{definition}

% For a bls $P$, if $s_i$ is a suitable row of $p$, we let $Q=\mathrm{comp}(P, i, T)$ be the unique bls such that $Q.p=\mathrm{comp}(p, i, T)$, $Q.j_i=Q.j_{i+1}=...=Q.j_{i+|T|}=j_i$, for $i'<i$ we have $Q.j_{i'}=j_i$, for $i'>i+|T|$ we have $Q.j_{i'}=j_{i'-|T|}$, and the witness of $m$-linedness remains unchanged.

% \vspace{12pt}

\begin{definition} \label{def:modification_operation}
  Suppose that~$p$ is a copyable blp. Let $i_1< i_2 < \cdots < i_k$ be the indices of the suitable rows of~$p.Copied$. Let $p.M$ be the blp obtained from $p.Copied$ by performing $\mathrm{fullcomp}(-,i_k)$, then $\mathrm{fullcomp}(-,i_{k-1})$, \dots, then $\mathrm{fullcomp}(-,i_{1})$. 

  If~$P$ is a realization of a copyable blp~$p$, then $P.M$ is obtained from $P.Copied$ by applying the same sequence of operations. Hence, $P.M$ is a realization of $p.M$. 
\end{definition}

As usual, we record facts that will be used shortly:

\begin{remark} \label{rmk:last_embedding_of_P.M}
  Suppose that~$P$ is a copyable bls. 
  \begin{enumerate}
    \item  $P.M.\theta_{-1} = P.\theta_{-1}$ ($P.M$ and~$P$ have the same last cardinal);
    \item  The last embedding $P.M.{j_{-1}}$ of $P.M$ is the last embedding of~$P.Copied$. 
  \end{enumerate}
\end{remark}

% If $p$ is copyable, we define $p.M$ by assuming $i_1 < i_2 < \cdots < i_u$ are the indices of all suitable rows, and we perform $\mathrm{fullcomp}(-, i_{v})$ on $p.Copied$ for $v = u, u - 1, \dots, 1$ in order. Similarly, if $P$ is a bls such that $P.p$ is copyable, $P.M$ is defined by completing $P.Copied$ in the same way. We have $P.M.p=P.p.M$, and $P.M.j_{-1}=P.Copied.j_{-1}$.

% We summarize the effect that the operations we defined have on the largest cardinal $\theta_{-1}$. 

% \begin{lemma} \label{lem:largest_cardinal_not_increased_by_operations}
%   Let~$P$ be a bls. 
%   \begin{itemize}
%     \item $P.\theta_{-1} = P.E.\theta_{-1} = P.M.\theta_{-1}$ (the operations~$E$ and~$M$ do not change the largest cardinal). 
%     \item $P.del.\theta_{-1} < P.\theta_{-1}$. 
%   \end{itemize}
% \end{lemma}

% Note that if $P$ is a bls, then for each of $P.del,P.M,q.E$ that is defined, its $\theta_{-1}$ is less than or equal to $P.\theta_{-1}$.

\subsubsection*{The main theorem}

Now we recursively define a function to estimate the critical points below a bls. It is defined as a partial function, although it is provable that the function is total.

\begin{definition} \label{def:the_f_function}
    We recursively define the partial function $f$ on a pair $(p,m)$, where $p$ is a blp, and $m\in \omega, m>0$, as follows:
    \begin{enumerate}
    \item If~$p$ is the zero blp, then $f(p,m)=m\cdot 2^m$.
    \item Else, if $p$ is not copyable, then $f(p,m)=f(p.del,m)$.
    \item If $p$ is of successor type, then $f(p,m)=(\lambda x.f(p.del,x))^{2^m}(m)$.\footnote{Recall that $g^k$ indicates the composition of~$g$ with itself~$k$ times.}
    \item If $p$ is of limit type, then $f(p,m)=f(p.E(m),m)$.
    \item Else, $f(p,m)=f(p.M,m)$.
    \end{enumerate}
\end{definition}

The definition gives an algorithm for computing $f(p,m)$, when defined, so~$f$ is partial computable. 

\begin{theorem} \label{thm:f_is_lower_bound}
  Suppose that~$P$ is a bls realizing a blp $p = P.p$. Let $m  = P.m$, and let $\theta_0,\theta_1,\dots,\theta_n, \theta_{n+1} = \theta_{-1}$ be the cardinals of~$P$. 

  Then $f(p,m)$ is defined, and $\theta_0,\theta_1,\theta_{-1}$ are $f(p,m)$-lined. 
\end{theorem}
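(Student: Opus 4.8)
The plan is to prove the theorem by induction on a well-founded rank $\rho$ attached to blps. The bulk of the preliminary work — which I would do first — is to define such a $\rho$ and verify the inequalities that make the induction run: $\rho(\text{zero blp}) = 0$ is minimal, $\rho(p.del) < \rho(p)$ whenever $p$ has length $\ge 3$, $\rho(p.E(m)) < \rho(p)$ when $p$ is of limit type, and $\rho(p.M) < \rho(p)$ when $p$ is of transient type and copyable. Morally a blp codes an ordinal (below $\varepsilon_0$): the first three entries and the step length of the last row detect whether one is at a successor stage, a limit stage, or a ``transient'' stage requiring normalization; $E(m)$ produces the $m$th term of the relevant fundamental sequence; $M$ rewrites a transient blp into normal form. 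One then takes $\rho(p)$ to be a lexicographic pair consisting of this ordinal together with a finite secondary measure forcing $.del$ and $.M$ to drop strictly. Granting these facts, the recursion of Definition~\ref{def:the_f_function} terminates, so $f(p,m)$ is defined whenever $p$ is realized by a bls, and the induction may proceed.

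\textbf{The routine cases.} Fix a bls $P$ realizing $p$, write $m = P.m$ and $\theta_0,\dots,\theta_{n+1} = \theta_{-1}$ for its cardinals, and split into the five cases of Definition~\ref{def:the_f_function}. If $p$ is the zero blp then $n=2$, the first row gives $j_1\colon \theta_0\mapsto\theta_1\mapsto\theta_2$, the second gives $j_2\colon\theta_0\mapsto\theta_1\mapsto\theta_2\mapsto\theta_3$, and $\theta_0,\theta_1,\theta_2$ are $m$-lined, so Lemma~\ref{aaab} with $k=j_2$ yields that $\theta_0,\theta_1,\theta_3$ are $m\cdot 2^m$-lined. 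If $p$ is not copyable, the inductive hypothesis applied to $P.del$ (which realizes $p.del$, has parameter $m$, endpoint $\theta_n$, and smaller rank) gives that $\theta_0,\theta_1,\theta_n$ are $f(p.del,m)$-lined; taking $l\in\Iter(j)$ with $l\colon(1)\,\theta_n\mapsto\theta_{n+1}$ from Lemma~\ref{lem:moving_any_theta_i_to_theta_j} and applying $l$ to the witnessing chain merely replaces the top cardinal $\theta_n$ by $\theta_{n+1}$ (since $\mathrm{crit}(l)=\theta_n$ fixes $\theta_0,\theta_1$ and all the intermediate cardinals), so $\theta_0,\theta_1,\theta_{n+1}$ are $f(p.del,m)=f(p,m)$-lined. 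If $p$ is of limit type, then $P.E$ realizes $p.E(m)$ with parameter $m$, the same first two cardinals $\theta_0,\theta_1$, and — by Lemma~\ref{lem:facts_about_P.E}(1) — the same last cardinal $\theta_{-1}$; as $\rho(p.E(m))<\rho(p)$, the inductive hypothesis gives that $\theta_0,\theta_1,\theta_{-1}$ are $f(p.E(m),m)=f(p,m)$-lined. If $p$ is of transient type (hence copyable), the identical argument with $P.M$ in place of $P.E$ and Remark~\ref{rmk:last_embedding_of_P.M}(1) in place of Lemma~\ref{lem:facts_about_P.E}(1) disposes of the case.

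\textbf{The successor case (the main obstacle).} The remaining case, $p$ of successor type, carries the real combinatorial weight and is the step I expect to be hardest (together with the rank construction). Here $s_n=(0,1,2,n,n+1)$ and $\ell_n=2$, so $j_n\colon(2)\,\theta_0\mapsto\theta_1\mapsto\theta_2\mapsto\theta_n\mapsto\theta_{n+1}$; in particular $j_n(\theta_1)=\theta_n$ and $j_n(\theta_2)=\theta_{n+1}$, and if $l_1,\dots,l_m$ with $\theta_1=\theta'_0<\cdots<\theta'_m=\theta_2$ witness $m$-linedness, then the images $\mu_i:=j_n(\theta'_i)$ satisfy $\theta_n=\mu_0<\cdots<\mu_m=\theta_{n+1}$ with $j_n(l_i)\colon\theta_2\mapsto\mu_{i-1}\mapsto\mu_i$. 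Writing $g=\lambda x.\,f(p.del,x)$, the inductive hypothesis on $P.del$ (parameter $m$, endpoint $\theta_n$) already gives that $\theta_0,\theta_1,\theta_n$ are $g(m)$-lined. The idea is to bootstrap: a realization of $p.del$ with parameter a large number $M$ and endpoint $\theta_n$ yields, via the inductive hypothesis, that $\theta_0,\theta_1,\theta_n$ are $g(M)$-lined; applying a suitable embedding assembled from the $j_n(l_i)$ transports this into a realization of $p.del$ with a still larger parameter and an endpoint advanced toward $\theta_{n+1}$, and crucially that realization may take as its own $m$-linedness witness the chain just produced. Iterating with a recursion that peels off the witnesses $l_m,l_{m-1},\dots$ one at a time — doubling the number of composed copies of $g$ at each of the $m$ levels, exactly the mechanism behind the $2^{m'}$ factor in Lemma~\ref{lem:lined_composition} — drives the endpoint all the way up to $\theta_{n+1}$ and shows that $\theta_0,\theta_1,\theta_{n+1}$ are $g^{2^m}(m)=f(p,m)$-lined. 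The delicate points are verifying that every intermediate object really is a bls realizing $p.del$ (so the inductive hypothesis, available since $\rho(p.del)<\rho(p)$, applies to it) and tracking the parameters so that precisely $2^m$ copies of $g$ are accumulated rather than fewer.
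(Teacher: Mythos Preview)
Your case analysis is right, and the successor-case bootstrapping you sketch is essentially the inner induction $\mathbf{H}(i)$ the paper carries out. The gap is the outer induction.

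You propose to induct on a rank $\rho$ attached to \emph{blps}, and you need $\rho(p.del)<\rho(p)$, $\rho(p.E(m))<\rho(p)$ for limit $p$, and $\rho(p.M)<\rho(p)$ for transient copyable $p$. Producing such a $\rho$ is not a bookkeeping matter; it is exactly the content of Definition~\ref{def:I2:rank_of_blp}, and the paper can only establish that that rank exists (even for the single pattern $p_{init}$) under the stronger axiom I2 (Theorem~\ref{thm:I2:p_init_has_rank}). Your remark that a blp ``morally codes an ordinal below $\varepsilon_0$'' is already false at this level of generality: the paper reports $r(p_{BO})=\psi(\Omega_\omega)$. There is no evident combinatorial invariant of a blp that drops under $.M$ --- the operation first \emph{copies} (which can make the pattern longer and structurally more complex) and then runs several $\mathrm{fullcomp}$'s --- so the ``finite secondary measure'' you gesture at has no candidate. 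Worse, constructing $\rho$ recursively from the very clauses you want it to satisfy is circular: termination of that recursion is equivalent to totality of $f$, which is what Theorem~\ref{thm:f_is_lower_bound} is used to prove (Remark~\ref{rmk:f_is_total}).

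The paper sidesteps all of this by inducting on \emph{bls's}, not blps. The well-founded relation (Definition~\ref{def:smaller_bls}) compares $P$ and $Q$ by their last cardinal $\theta_{-1}$ and, in case of a tie, by whether $P.j_{-1}=Q.j_{-1}(k)$ for some $k$; well-foundedness is Dougherty's Lemma~\ref{lem:sup_crit}. The point is that $P.del$, $P.M$, and $P.E$ are all strictly smaller than $P$ in this ordering (Lemma~\ref{lem:operations_reduce_rank}): $.del$ drops the last cardinal, while $.M$ and $.E$ keep it but replace the last embedding by something of the form $P.j_{-1}(k)$ (Remarks~\ref{rmk:last_embedding_of_P_copied}, \ref{rmk:last_embedding_of_P.M}, Lemma~\ref{lem:facts_about_P.E}). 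This is the missing idea: the descent lives on the embedding side, not on the pattern side. Once you switch to this ordering, your successor-case sketch goes through --- but you must state the inner hypothesis for all bls's $Q<P$ realizing $p.E(i)$ (any $Q.m$), since the second half $R^{\mathrm{new}}$ of a realization of $p.E(i+1)$ is again a realization of $p.E(i)$ that is $<P$ only by the embedding clause, not by having a smaller last cardinal.
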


\begin{remark} \label{rmk:f_is_total}
  Theorem~\ref{thm:f_is_lower_bound} implies that~$f$ is a total function: Dougherty \cite{Dougherty} observed that any finite increasing subsequence of the sequence $\kappa_0,\kappa_1,\dots$ of the critical sequence of~$j$ can be realised as the initial segment of a critical sequence of some $j'\in \Iter^*(j)$. It follows that for every blp~$p$ and every $m>0$, there is a realization $P$ of~$p$ with $P.m=m$ and all cardinals $P.\theta_i$ from the $\kappa_i$'s. 
\end{remark}

To prove Theorem \ref{thm:f_is_lower_bound}, we recall a lemma that is Theorem 2.2 of~\cite{Dougherty2}:

\begin{lemma} \label{lem:sup_crit}
    For any infinite sequence $j_1, j_2, \dots$ of elementary embeddings $V_\lambda\rightarrow V_\lambda$, if there exist $j'_1, j'_2, \dots \in \mathrm{Iter}(j)$ such that for each integer $i \ge 1$, $j_i(j'_i) = j_{i+1}$, then $\sup(\crit(j_i)) = \lambda$.
\end{lemma}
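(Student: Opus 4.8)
The plan is to argue by contradiction, with the contradiction ultimately coming from Kunen's theorem that there is no nontrivial elementary embedding $V_{\gamma+2}\to V_{\gamma+2}$ for any ordinal $\gamma$. So I would suppose toward a contradiction that $\delta:=\sup_i\crit(j_i)<\lambda$. Two standard facts about the application operation drive everything: for elementary $l,k\colon V_\lambda\to V_\lambda$ one has (i) $\crit(l(k))=l(\crit k)$ — hence $\crit(l(k))=\crit k$ when $\crit k<\crit l$, and $\crit(l(k))>\crit l$ otherwise — and (ii) the shift identity $l(k)\circ l=l\circ k$; both follow by witnessing the relevant statements about $k$ inside some $V_\theta$ ($\theta<\lambda$) and applying elementarity of $l$ to the element $k\cap V_\theta$. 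Writing $\kappa_i=\crit(j_i)$ and $\mu_i=\crit(j'_i)$, fact (i) applied to $j_{i+1}=j_i(j'_i)$ splits each step into a \emph{drop} ($\mu_i<\kappa_i$, so $\kappa_{i+1}=\mu_i<\kappa_i$) or a \emph{rise} ($\mu_i\ge\kappa_i$, so $\kappa_{i+1}=j_i(\mu_i)\ge j_i(\kappa_i)>\kappa_i$). Since the ordinals are well-founded there is no infinite block of consecutive drops, so rises occur cofinally, and therefore $\delta$ is the supremum of the ``peaks'' $\kappa_{i+1}$ over rise-steps $i$, each peak being $\ge j_i(\kappa_i)$. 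I would also note two easy points: each $\mu_i\in\crit^*(j)$ (as $j'_i\in\mathrm{Iter}(j)$), and $\crit^*(j)$ is cofinal in $\lambda$ because every $\kappa_n$ lies in it and $\lim_n\kappa_n=\lambda$; and each individual $j_i$ already has its own critical sequence cofinal in $\lambda$, since otherwise $j_i$ would restrict to a nontrivial elementary self-map of $V_{\gamma+2}$ with $\gamma<\lambda$ the sup of that sequence, contradicting Kunen.

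The heart of the proof is then to manufacture from the supposed counterexample a single nontrivial elementary embedding $V_{\delta+2}\to V_{\delta+2}$. The raw material is the shift identity (ii): it says that $j_{i+1}$ agrees on $\mathrm{ran}(j_i)$ with the conjugate of $j'_i\in\mathrm{Iter}(j)$ by $j_i$, so the whole sequence $\langle j_i\rangle$ is ``glued together'' out of images of members of $\mathrm{Iter}(j)$. Combining this with the fact that rises are cofinal and their peaks approach $\delta$ from below, I would build a coherent increasing sequence of partial elementary embeddings $V_{\delta+2}\to V_{\delta+2}$ whose critical points run cofinally through $\delta$ but stay below $\delta$ — after first passing to a refinement in which some fixed ordinal just below $\delta$ is a common closure point of the embeddings in play (possible because members of $\mathrm{Iter}(j)$ have club-many closure points below $\lambda$) — and then take the union to obtain a single nontrivial elementary $V_{\delta+2}\to V_{\delta+2}$, contradicting Kunen's theorem. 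Hence $\delta=\lambda$.

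I expect this last construction to be the genuine obstacle; everything preceding it is bookkeeping driven by facts (i) and (ii). The delicate points are that the limit object must be verified to be total and \emph{fully} elementary on $V_{\delta+2}$, not merely on $V_\delta$; that the steps where $j_i(\delta)\ne\delta$ must be absorbed by the choice of closure point; and that the critical sequence of the limit must be checked to be genuinely cofinal in $\delta$. It is exactly here that one uses that the $j'_i$ come from $\mathrm{Iter}(j)$ rather than being arbitrary embeddings, and here that the combinatorics of the application operation — the same combinatorics studied in the rest of the paper — does the work.
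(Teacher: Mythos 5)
The paper does not prove this lemma at all: it quotes it as Theorem~2.2 of Dougherty's second paper (it is essentially the Laver--Steel theorem), so your argument has to stand entirely on its own. Your first paragraph is fine as far as it goes: facts (i) and (ii) are correct, the drop/rise dichotomy for $\crit(j_i(j'_i))$ is correct, and well-foundedness does rule out infinitely many consecutive drops. But none of that is where the content of the lemma lives, and your second paragraph --- which you yourself flag as ``the genuine obstacle'' --- is not a construction but a hope. That is a genuine gap, and I do not believe the route can be repaired. The union-of-partial-embeddings idea fails at the start: the $j_i$ are \emph{different} embeddings, and nothing forces $j_{i+1}$ and $j_i$ to agree on any $V_\alpha$. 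The shift identity $j_{i+1}\circ j_i=j_i\circ j'_i$ only constrains $j_{i+1}$ on the range of $j_i$, which is far too thin a class to glue along; there is no ``coherent increasing sequence'' in sight. Worse, the auxiliary claim you lean on --- that members of $\Iter(j)$ have club-many closure points below $\lambda$ --- is false: if $k\colon V_\lambda\to V_\lambda$ is nontrivial with critical sequence $\kappa_0<\kappa_1<\cdots$ cofinal in $\lambda$, then any $\alpha$ with $\kappa_n<\alpha\le\kappa_{n+1}$ satisfies $k(\kappa_n)=\kappa_{n+1}\ge\alpha$, so the closure points of $k$ below $\lambda$ are exactly the ordinals $\le\crit(k)$. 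So the ``refinement'' step has no foothold either.

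More fundamentally, the known proofs of this statement do not run through Kunen's theorem at $\delta$ at all. One cannot even set up the natural reflection argument, because ``there exists an infinite bad chain'' quantifies over $\omega$-sequences of embeddings, which are not elements of $V_\lambda$. The actual argument (Steel's, as adapted by Dougherty) is a minimal-counterexample/well-foundedness argument carried out in the algebra $\Iter(j)$ itself, using in an essential way that the $j'_i$ come from $\Iter(j)$ --- concretely, that everything can be tracked modulo the equivalences $\equiv_{\gamma_m}$, whose quotients are the finite Laver tables. Your sketch uses the hypothesis $j'_i\in\Iter(j)$ only to say $\crit(j'_i)\in\crit^*(j)$, which is nowhere near enough (the lemma is false for arbitrary sequences of embeddings $j'_i$ is the wrong way to put it, but the point is that this is the hypothesis doing all the work, and your argument never engages with it). To fix the proof you would need to replace the entire second paragraph with the Laver--Steel induction; there is no shortcut through $V_{\delta+2}$.
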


Let $R$ be the binary relation on the collection of all elementary embeddings $j\colon V_\lambda\to V_\lambda$ defined as follows:
\begin{itemize}
  \item $j'\,R\,j$ if there is some elementary $k\colon V_\lambda\to V_\lambda$ such that $j' = j(k) $.
\end{itemize}
Then Lemma~\ref{lem:sup_crit} implies that for any $\theta<\lambda$, the restriction of~$R$ to those embeddings with critical point $<\theta$ is well founded. 

\begin{definition} \label{def:smaller_bls}
  We let $P<Q$ be the transitive closure of the following binary relation on bls's:
  \begin{itemize}
    \item $P.\theta_{-1}< Q.\theta_{-1}$, or $P.\theta_{-1}= Q.\theta_{-1}$ and $P.{j_{-1}} = Q.j_{-1}(k)$ for some elementary $k\colon V_\lambda\to V_\lambda$. That is, either the last cardinal of~$P$ is strictly smaller than the last cardinal of~$Q$, or they are equal, and the last embedding of~$P$ stands in relation~$R$ with the last emebdding of~$Q$. 
  \end{itemize}
\end{definition}

Thus, the relation $P<Q$ is a well-founded partial ordering on the collection of all bls's. We say that $P$ is \emph{smaller} than~$Q$.

% we can assign an ordinal rank $r_\theta(j)$ to each elementary embedding $j\colon V_\lambda\rightarrow V_\lambda$ with $\crit(j)<\theta$, such that:
% \begin{itemize}
  % \item For any $j'$ such that $\crit(j(j'))<\theta$, we have $r_\theta(j(j'))<r_{\theta}(j)$. 
% \end{itemize}

% For a blp~$P$, we let 
% \[
  % r(P) = r_{\theta_{-1}} (j_{-1}), 
% \]
% where $\theta_{-1} = P.\theta_{-1}$ is the last cardinal of~$P$, and $j_{-1} = P.j_{-1}$ is the last embedding of~$P$. 

\begin{lemma} \label{lem:operations_reduce_rank}
  Let~$P$ be a bls. 
  \begin{enumerate}
    \item If $P$ is nonzero then $P.del < P$. 
    \item If $P$ is copyable then $P.M<P$. 
    \item If $P$ is of limit or of successor type, then $P.E<P$.
  \end{enumerate}
  Further, the operations~$M$ and~$E$ preserve the cardinals $\theta_0$, $\theta_1$, and the last cardinal $\theta_{-1}$: $P.M.\theta_i = P.\theta_i$ and $P.E.\theta_i = P.\theta_i$ for $i = 0,1,-1$. 
\end{lemma}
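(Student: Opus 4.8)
The plan is to verify each of the three inequalities directly against Definition~\ref{def:smaller_bls}, by checking which single clause of the relation underlying $<$ is witnessed. Recall that one step of that relation is available either when the last cardinal strictly decreases, or when the last cardinal is unchanged and the new last embedding is a \emph{single} application of the old last embedding. Each item then reduces to reading off the ``last cardinal / last embedding'' bookkeeping already recorded in Remarks~\ref{rmk:last_embedding_of_P_copied} and~\ref{rmk:last_embedding_of_P.M} and in Lemma~\ref{lem:facts_about_P.E}, together with the elementary fact that the cardinals $\theta_0 < \theta_1 < \cdots < \theta_{n+1}$ of a bls are strictly increasing (a short induction on $n$ using that each $j_i$ is order preserving and that the last two entries of $s_i$ are $i,i+1$).

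For item (1): a nonzero bls has length $n \ge 3$, so $P.del$ is defined; it is obtained from $P$ by discarding $\theta_{n+1}$ and $j_n$, so $P.del.\theta_{-1} = P.\theta_n < P.\theta_{n+1} = P.\theta_{-1}$, and the first clause gives $P.del < P$. For item (2): by Remark~\ref{rmk:last_embedding_of_P_copied}, $P.Copied.\theta_{-1} = P.\theta_{-1}$ and $P.Copied.j_{-1} = P.j_{-1}(k)$ for some elementary $k$; by Remark~\ref{rmk:last_embedding_of_P.M}, $P.M$ has the same last cardinal and the same last embedding as $P.Copied$. Hence $P.M.\theta_{-1} = P.\theta_{-1}$ and $P.M.j_{-1} = P.j_{-1}(k)$, which is exactly the second clause, so $P.M < P$.

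Item (3) is the only one where a single step of the relation does not suffice: by Lemma~\ref{lem:facts_about_P.E}, $P.E$ has the same last cardinal as $P$, but its last embedding is a \emph{double} application $(P.j_{-1}(k_0))(k_1)$ of $P.j_{-1}$. The plan is therefore to interpose an intermediate bls. Writing $m = P.m$ (so $m \ge 1$ and the construction of Definition~\ref{def:E_operation_on_bls} is available), I would use the bls $Q_m$ there: $Q_m$ realizes the blp $q_m$, its last embedding is $j_n(l_m) = P.j_{-1}(l_m)$, and its last cardinal is $\mu_m = j_n(\theta'_m) = j_n(\theta_2) = \theta_{n+1} = P.\theta_{-1}$; so $Q_m < P$ by the second clause (with $k = l_m$). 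Since $q_m$ is copyable (Remark~\ref{rmk:applying_a_row_of_length_3}) and $P.E = P.E(m) = Q_m.Copied$, Remark~\ref{rmk:last_embedding_of_P_copied} applied to $Q_m$ gives $P.E.\theta_{-1} = Q_m.\theta_{-1}$ and $P.E.j_{-1} = Q_m.j_{-1}(k_1)$ for some elementary $k_1$, so $P.E < Q_m$ by the second clause; by transitivity, $P.E < P$. (This essentially re-proves Lemma~\ref{lem:facts_about_P.E}(2) in the form needed here, exhibiting $P.j_{-1}(k_0) = P.j_{-1}(l_m)$ as the last embedding of a genuine bls, namely $Q_m$.)

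Finally, for the ``further'' clause: the equalities at $i=-1$ are exactly Remark~\ref{rmk:last_embedding_of_P.M}(1) and Lemma~\ref{lem:facts_about_P.E}(1). At $i=0,1$: the $.Copied$ operation preserves $\theta_0,\dots,\theta_n$ (as noted just after the definition of $P.Copied$), adjoining a row preserves all previously named cardinals, $.del$ only drops the last cardinal, and $\mathrm{fullcomp}(-,i')$ preserves $\theta_0,\dots,\theta_{i'}$ with $i'\ge 3$ for a suitable row; since $P.M$ and $P.E$ are obtained from $P$ by composing these operations, $\theta_0$ and $\theta_1$ survive. I expect the only genuine obstacle to be the bookkeeping in item (3): recognizing that $Q_m$ must be interposed because $<$ permits only one application per step, and confirming $Q_m$ is a legitimate bls with the advertised last cardinal and last embedding; the remaining steps are direct appeals to the recorded remarks.
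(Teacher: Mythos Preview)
Your proposal is correct and follows the same approach as the paper's (very terse) proof, which simply cites Remark~\ref{rmk:last_embedding_of_P_copied}, Remark~\ref{rmk:last_embedding_of_P.M}, and Lemma~\ref{lem:facts_about_P.E} for the three items respectively. You go further than the paper in making explicit the intermediate bls~$Q_m$ needed for item~(3): since $<$ is the transitive closure of a one-step relation on bls's, the double application in Lemma~\ref{lem:facts_about_P.E}(2) genuinely requires an intermediate bls as a stepping stone, and you correctly identify $Q_m$ from Definition~\ref{def:E_operation_on_bls} (with last cardinal $\mu_m=P.\theta_{-1}$ and last embedding $P.j_{-1}(l_m)$) as that witness---a detail the paper leaves implicit.
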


\begin{proof}
  (1) holds because $P.del.\theta_{-1}< P.\theta_{-1}$. 
  For~(3) we use Lemma~\ref{lem:facts_about_P.E}. For~(2) we use Remarks~\ref{rmk:last_embedding_of_P.M} and~\ref{rmk:last_embedding_of_P_copied}. 
\end{proof}

 % From the notes above, if $P.p$ is copyable, then $P.M.\theta_{-1}=P.\theta_{-1}$ and $r_{\theta_{-1}}(P.M.j_{-1})<r_{\theta_{-1}}(P.j_{-1})$. If $P.p$ is of limit type, then $P.E.\theta_{-1}=P.\theta_{-1}$ and $r_{\theta_{-1}}(P.E.j_{-1})<r_{\theta_{-1}}(P.j_{-1})$.

Now we prove the theorem.

\begin{proof}[Proof of Theorem~\ref{thm:f_is_lower_bound}]
    We prove the theorem by induction on the well-founded relation $P<Q$. 
    % We use double induction on the last cardinal $P.\theta_{-1}$ and then on $r(P)$. 
    % $r_{P.\theta_{-1}}(P.j_{-1})$.
    % Fix $p = P.p$ and $m = P.m$; let $\theta_0$, $\theta_1$, $\theta_2$, $\dots$, $\theta_{n+1} = \theta_{-1}$ be the cardinals of~$P$.
    Let $P$ be a bls; let $p = P.p$ and $m = P.m$. 
    We check the various cases of the definition of~$f$. 

    \smallskip

    For Case 1, the result is Lemma~\ref{aaab}.

    \smallskip

    For Case 2, by induction, the theorem holds for $P.del$. Now $P.del.\theta_{-1}  = P.\theta_{-2}$. Let $r= f(p,m) = f(p.del,m)$. By induction, let  $j_1$, \dots, $,j_r$ witness that $P.\theta_0,P.\theta_1,P.\theta_{-2}$ are $r$-lined. By Lemma~\ref{lem:moving_any_theta_i_to_theta_j}, let $k$ be an elemnetary embedding with critical point $P.\theta_{-2}$ that maps $P.\theta_{-2}$ to $P.\theta_{-1}$. Then the sequence 
    \[
      j'_1, j'_2,\dots, j'_{r-1}, k(j'_r)
    \]
    witness that $P.\theta_0,P.\theta_1,P.\theta_{-1}$ are $r$-lined. 

    \smallskip

    Cases 4 and 5 follow by the induction hypothesis, using Lemma~\ref{lem:operations_reduce_rank}.

    \smallskip

    The remaining case is Case~3. The notation involved is a little cumbersome, so we start by giving an example, that illustrates the general situation. Suppose that~$p$ is as in Figure~\ref{fig:case3_p}. Suppose that $m=2$. 

  % Assume $P.m=2$, and $P.p$ is as in Figure~\ref{fig:case3_p}.

    \drawSequence{0.1}{0.5}{0.5}{0,1,2,-1,0,1,2,3,-1,0,1,2,3,4,-2,0,1,2,3,4,5,-3,0,1,2,5,6,-2}[The blp for the bls $P$ in the example for Case 3.][fig:case3_p]

    Since $p$ is of successor type (the last row is $(0,1,2,n,n+1)$, in this case $(0,1,2,5,6)$), when applying the $E$ operation we have $a=2$ (in the notation of Definition~\ref{def:operation_E}). This means that at each step (from $p.E(0)$ to $p.E(1)$ to $p.E(2)$) the rows that are copied are all but the first one, achieving a ``doubling effect'' (see Remark~\ref{rmk:the_length_of_the_E_blp}), and the effect on each copied row is to preserve the first two columns, and shift the rest by a constant amount that achieves the stair-case pattern. Thus, the blp $p.E(2)$ is as depicted in Figure~\ref{fig:case3_pe}; since $m=2$, the bls $P.E$ realizes $p.E(2)$. 

    \drawSequence{0.1}{0.5}{0.5}{0,1,2,-1,0,1,2,3,-1,0,1,2,3,4,-2,0,1,2,3,4,5,-3,0,1,5,6,-1,0,1,5,6,7,-2,0,1,5,6,7,8,-3,0,1,8,9,-1,0,1,8,9,10,-2,0,1,8,9,10,11,-3,0,1,11,12,-1,0,1,11,12,13,-2,0,1,11,12,13,14,-3}[The blp $p.E(2)$ from the example.][fig:case3_pe]

    For this example, let $\theta_i = P.E.\theta_i$ denote the cardinals of $P.E$. Since $P.E(0) = P.del$ realizes $p.E(0)= p.del$ with $m=2$, and $P.E(0)<P$, by induction, the cardinals $(\theta_0,\theta_1,\theta_5)$ are $f(p.del,2)$-lined. 

    Let $k_1$ be an embedding with $k_1: \theta_0\mapsto \theta_1 \mapsto \theta_5$ (apply an embedding moving $\theta_2$ to $\theta_5$ to the first embedding of~$P$, using Lemma~\ref{lem:moving_any_theta_i_to_theta_j}). Then this embedding, together with the embeddings given by row $5,6,7$ of Figure~\ref{fig:case3_pe}, give a bls~$Q_0$ that realizes $p.del$ (with $Q.\theta_2 = \theta_5$), and we can set $Q.m = f(p.del,2)$. Since $Q<P$ (the last cardinal of~$Q$ is $P.E.\theta_8$, which is smaller than the last cardinal of~$P$), by induction, $(\theta_0,\theta_1,\theta_8)$ are $f(p.del,f(p.del,2))$-lined. 

    Note that our argument is not just restricted to $P.E(1)$; we actually showed that if $R$ is any bls realizing $p.E(1)$ with $R<P$, then $(R.\theta_0,R.\theta_1,R.\theta_{-1})$ are $f(p.del,f(p.del,R.m))$-lined. We can take the second half of $P.E$ (rows 8 to 13), and with an embedding with sequence $\theta_0\mapsto \theta_1\mapsto \theta_8$ get a realization $Q_1$ of $p.E(1)$ with $Q_1.m = f(p.del,f(p.del,2))$. We have $Q_1<P$ since $Q_1$ and~$P$ have the same last cardinal, But the last embedding of~$Q_1$ is the last embedding of~$P.E$, and $P.E<P$. Applying the inductive hypothesis, we obtain the desired result. 

    % From this image and the induction hypothesis, we see that the first 4 rows witness that $P.\theta_0,P.\theta_1,P.\theta_{5}$ are $f(P.del.p,2)$-lined. Combining this with the fifth to seventh rows in the image, $P.\theta_0,P.\theta_1,P.\theta_{8}$ are $f(P.del.p,f(P.del.p,2))$-lined, and so on.

    \medskip

    We now give the formal details. Let~$p$ have successor type; as above, let $P$ realize~$p$, and let $m = P.m$; let
    \[
    g(x):=f(p.\mathrm{del},x).
    \]
    Recall the sequence of bls's $P.E(i)$ for $i=0,\dots, m$ given in Definition~\ref{def:E_operation_on_bls}; $P.E = P.E(m)$. We start with $P.E(0) = P.del$, and for each $i<m$, $P.E(i+1)$ is obtained from $P.E(i)$ by applying the embedding $j_n(l_{i+1})$ (corresponding to a row $(2,n_i,n_{i+1})$) to all but the first row of $P.E(i)$. 
    % For each $i<m$
    % \[
    % P_0:=P.\mathrm{del},\qquad P_{i+1}:=Q_{i+1}.\mathrm{Copied}\quad(0\le i<m),
    % \]
    % where $Q_{i+1}$ is obtained by appending to $P_i$ the row $(2,n_i+1,n_i+2)$ with last embedding $j_n(l_{i+1})$ and then copying with $a=2$. Recall that $P_i$ realizes $p.E(i)$. 
    % (Remark~\ref{rmk:applying_a_row_of_length_3}). 

    We prove the following by induction on $i=0,1,\dots, m$. 
    % We use the following invariant, with an explicit smallness condition relative to $P$. Let $i=0,\dots, m$.

    \begin{description}
    \item[$\mathbf{H}(i)$:] If~$Q$ is any bls that:
    \begin{itemize}
      \item is smaller than~$P$, and 
      \item realizes $p.E(i)$, 
    \end{itemize}
    then the triple $(Q.\theta_0,Q.\theta_1,Q.\theta_{-1})$ is $g^{2^i}(Q.m)$-lined.
     % $Q<P$ that realizes with $Q.p=P_i.p = p.E(i)$ and $Q.m=m'$,
    % that is smaller than $P$ in the sense that
    % \[
    % Q.\theta_{-1}<P.\theta_{-1}\quad\text{or}\quad
    % \bigl(Q.\theta_{-1}=P.\theta_{-1}\ \text{and}\ r(Q.j_{-1})<r(P.j_{-1})\bigr),
    % \]
    % the triple $(Q.\theta_0,Q.\theta_1,Q.\theta_{-1})$ is $g^{2^i}(m')$-lined.
    \end{description}

    The point is that $\mathbf{H}(i)$ applies no matter what $Q.m$ is, we do not restrict to $Q.m = i$, or $Q.m = P.m$, or any other restriction. 

    The statement $\mathbf{H}(m)$ suffices to prove Case~3, by taking $Q= P.E$. Note that $P$ and $P.E$ have the same last cardinal, and that $P.E<P$, so $\mathbf{H}(m)$ applies. 
    % then $P_m=P.E$ is smaller than $P$.

    \smallskip

    We prove $\mathbf{H}(i)$ by induction on~$i$. 

    \smallskip

    \emph{Base case: $i=0$.}
    Here $p.E(0)=p.\mathrm{del}$. Take any $Q$ realizing $p.\mathrm{del}$, that is smaller than $P$. By induction, since $P.del<P$, 
    \[
    (Q.\theta_0,Q.\theta_1,Q.\theta_{-1})\ \text{is}\ f(p.\mathrm{del},Q.m)=g(Q.m)\text{-lined},
    \]
    i.e.\ $\mathbf{H}(0)$ holds.

    \smallskip

    \emph{Inductive step: $i\to i+1$.}
    Fix a bls $Q$ realizing $p.E(i+1)$ that is smaller than~$P$. Let $m' = Q.m$. We must show that
    \[
    (Q.\theta_0,Q.\theta_1,Q.\theta_{-1})
    \]
    is $g^{2^{i+1}}(m')$-lined.

    \textbf{(A) Old block inside $Q$.}
    Let
    \[
    R^{\mathrm{old}}\ :=\ Q\!\mid p.E(i).n
    \]
    be the restriction to the first $p.E(i).n$ rows. Then $R^{\mathrm{old}}$ realizes $p.E(i)$, $R^{\mathrm{old}}.m=m'$, and
    \[
    \nu_i:=R^{\mathrm{old}}.\theta_{-1}\ <\ Q.\theta_{-1};
    \]
    hence, $R^{\mathrm{old}}< Q < P$. 
    Applying $\mathbf{H}(i)$ to $R^{\mathrm{old}}$, we obtain witnesses $w_1,\dots,w_M\in\Iter(j)$ showing that
    \[
    (Q.\theta_0,Q.\theta_1,\nu_i)\ \text{are}\ M\text{-lined},\qquad \text{where }M:=g^{2^i}(m').
    \]

    \textbf{(B) New copied block inside $Q$.}
    Let $R^{\mathrm{new}}$ consist of all rows of~$Q$ with row number larger than $p.E(i).n$, together with a top row emedding with sequence $Q.\theta_0\mapsto Q.\theta_1\mapsto \nu_i$ (which can be obtained by applying to the first embedding of~$Q$ an embedding taking $Q.\theta_2$ to $\nu_i$). As discussed, $p.E(i+1)$ comes from $p.E(i)$ by copying with $a=2$ which applies to all rows of $p.E(i)$ except the top row, and the effect is to increase all entries $\ge 2$ by a fixed constant; see Remark~\ref{rmk:applying_a_row_of_length_3}. 
    This shows that $R^{\mathrm{new}}$ is a realization of $p.E(i)$, with 
    \[
    R^{\mathrm{new}}.\theta_0=Q.\theta_0,\ \ R^{\mathrm{new}}.\theta_1=Q.\theta_1,\ \ 
    R^{\mathrm{new}}.\theta_{-1}=Q.\theta_{-1}, 
    \]
    and $R^{\mathrm{new}}.m=M$; the linedness witnesses of $R^{\mathrm{new}}$ are the $M$ embeddings $w_1,\dots,w_M$ found in (A). The last row of $R^{\mathrm{new}}$ is the same as $Q$, so $R^{\textrm{new}}<P$. 

    \textbf{(C) Reapply the invariant.}
    Applying $\mathbf{H}(i)$ to $R^{\mathrm{new}}$ gives that
    \[
    (Q.\theta_0,Q.\theta_1,Q.\theta_{-1})
    \]
    is $g^{2^i}(M)=g^{2^i}\!\bigl(g^{2^i}(m')\bigr)=g^{2^{i+1}}(m')$-lined, as required.
    This proves $\mathbf{H}(i+1)$, completing the induction. 
\end{proof}

% \vspace{12pt}

%%%%%%%%%%%%%%%%%%%%%%%%%%%%%%%%%%%%
\section{Lower bounds}
\label{sec:lower_bounds}

Now we give the proofs of the estimates presented in the introduction.

\subsubsection*{Small critical points}

First, we recall the analysis of small critical points from~\cite{Dougherty}, specifically for row $n=11$ in Table 2. Recall the embeddings $j_{(m)}$ and the cardinals $\kappa^n_m$ defined in the introduction. Recall that $j : \kappa_0\mapsto \kappa_1 \mapsto \kappa_2 \cdots$. 

\begin{lemma} \label{lem:dougherty_summary} \
  \begin{enumerate}
    \item $j_{(11)}:(2) \,\,\kappa_0 \mapsto \kappa_1 \mapsto \kappa_2\mapsto \kappa_3\mapsto \kappa_2^7 \mapsto \kappa_3^{11}$. 

    \item $j_{(11)}(j_{(11)}):(2)\,\, \kappa_2\mapsto \kappa_3\mapsto \kappa_2^7 \mapsto \kappa_3^{11} \mapsto \kappa_3^{10}$. 
  \end{enumerate}
\end{lemma}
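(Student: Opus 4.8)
The plan is to derive both parts from the single recursion $j_{(n+1)} = j_{(n)}(j)$ together with repeated use of Lemma~\ref{lem:lk_application_general}; this amounts to reconstructing the relevant portion of Dougherty's ``Table~2''. The engine is this. We start from $j:(1)\,\kappa_0\mapsto\kappa_1\mapsto\kappa_2\mapsto\cdots$. Applying Lemma~\ref{lem:lk_application_general} (with $m=1$) to $j_{(n+1)} = j_{(n)}(j)$ yields
\[
  j_{(n+1)} : (1)\ j_{(n)}(\kappa_0)\mapsto j_{(n)}(\kappa_1)\mapsto j_{(n)}(\kappa_2)\mapsto\cdots ,
\]
so the critical sequence of $j_{(n+1)}$ is the $j_{(n)}$-image of the critical sequence $\kappa_0<\kappa_1<\kappa_2<\cdots$ of $j$, and $j_{(n+1)}$ shifts along it by one; in particular $\crit(j_{(n+1)}) = j_{(n)}(\kappa_0)$. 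Since an embedding fixes every ordinal below its critical point, $j_{(n)}(\kappa_0)$ is just $\kappa_0$ whenever $\crit(j_{(n)})>\kappa_0$, and otherwise equals the successor of $\kappa_0$ in the critical sequence of $j_{(n)}$. Finally, being order-preserving and injective, $j_{(n)}$ carries each open interval between two consecutive critical points into the corresponding shifted interval; so once we know where $j_{(n)}$ sends its own critical sequence we know, up to naming the images, where it sends every ordinal.

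Concretely, I would compute by induction for $n=1,2,\dots,11$ a finite ``profile'' of $j_{(n)}$: its critical point, together with the $j_{(n)}$-images of a fixed finite set $S$ of marked ordinals. Take $S$ to contain $\kappa_0,\kappa_1,\kappa_2,\kappa_3$ and, adjoined to $S$ as soon as they appear, the iterated critical points $\kappa^m_i$ produced along the way, so that $S$ stays closed under the recursion and we always know which interval of the current critical sequence each marked ordinal occupies. Carrying this out reproduces the entries of row~$11$ of Dougherty's table: one verifies $\crit(j_{(11)}) = \kappa_0$ (the last step being $\crit(j_{(11)}) = j_{(10)}(\kappa_0) = \kappa_0$ because $\crit(j_{(10)}) = \kappa_1>\kappa_0$), together with
\[
  j_{(11)}(\kappa_0) = \kappa_2,\quad j_{(11)}(\kappa_1) = \kappa_3,\quad j_{(11)}(\kappa_2) = \kappa^7_2,\quad j_{(11)}(\kappa_3) = \kappa^{11}_3,\quad j_{(11)}(\kappa^7_2) = \kappa^{10}_3 .
\]
Part~(1) is then the step-$2$ reading of the first four of these identities: in the increasing list $\kappa_0,\kappa_1,\kappa_2,\kappa_3,\kappa^7_2,\kappa^{11}_3$, the $j_{(11)}$-image of each entry is the entry two places further on (indeed $j_{(11)}(\kappa_0) = \kappa_2$ leaps over $\kappa_1$, which is exactly why the step is $2$).

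For Part~(2), I would apply Lemma~\ref{lem:lk_application_general} once more, this time with $l=k=j_{(11)}$ and the chain furnished by Part~(1). It gives
\[
  j_{(11)}(j_{(11)}) : (2)\ j_{(11)}(\kappa_0)\mapsto j_{(11)}(\kappa_1)\mapsto j_{(11)}(\kappa_2)\mapsto j_{(11)}(\kappa_3)\mapsto j_{(11)}(\kappa^7_2)\mapsto j_{(11)}(\kappa^{11}_3) ,
\]
where the $(2)$-notation already records $\crit(j_{(11)}(j_{(11)})) = j_{(11)}(\kappa_0) = \kappa_2$. Substituting the five identities above and truncating after the term $\kappa^{10}_3 = j_{(11)}(\kappa^7_2)$ gives precisely $j_{(11)}(j_{(11)}):(2)\,\kappa_2\mapsto\kappa_3\mapsto\kappa^7_2\mapsto\kappa^{11}_3\mapsto\kappa^{10}_3$. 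Thus Part~(2) needs, beyond Part~(1) and Lemma~\ref{lem:lk_application_general}, only the single extra identity $j_{(11)}(\kappa^7_2) = \kappa^{10}_3$, itself one of the profile entries.

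The crux---and the hard part---is the inductive computation of the profiles, in particular the two ``collapse'' coincidences that make the numbers $7$ and $10$ appear, namely $j_{(11)}(\kappa_2) = \kappa^7_2 = j_{(7)}(\kappa_2)$ and $j_{(11)}(\kappa^7_2) = \kappa^{10}_3 = j_{(10)}(\kappa_3)$. These record nontrivial agreements within the tower of iterates that are not forced by the interval structure alone; establishing them requires carefully maintaining the relative \emph{order} of the auxiliary ordinals $\kappa^m_i$ as they accumulate, in order to decide which interval of a given critical sequence each one sits in at each stage. This bookkeeping is exactly Dougherty's Table~2 for $n=11$, which the proof recovers.
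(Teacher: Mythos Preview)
Your proposal is correct and aligns with the paper's approach: the paper does not give its own proof of this lemma but simply states it as a recall of row $n=11$ in Dougherty's Table~2, which is exactly the inductive profile computation you sketch. Your observation that Part~(2) follows from Part~(1) via Lemma~\ref{lem:lk_application_general} plus the single extra identity $j_{(11)}(\kappa^7_2)=\kappa^{10}_3$ is accurate and is implicit in how the paper uses the lemma downstream.
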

    % Let:
    % \begin{itemize}
       % \item $\theta_0, \theta_1, \dots, \theta_6$ be  
     % \end{itemize}  
     % The embeddings $j, j, j_{(11)}, j_{(11)}, j_{(11)}(j_{(11)})$ witness that there is a realization $P_{start}$ of $p_{start}$ with $P_{start}.m=1$, and $P_{start}.\theta_{i}=\theta_i$ for each $i=0,1,\dots, 6$.
    %  \begin{enumerate}
    % \item Let $p$ be as in Figure~\ref{fig:lemma_p1}.
    % \drawSequence{0.1}{0.5}{0.5}{0,1,2,-1,0,1,2,3,-1,0,1,2,3,4,-2,0,1,2,3,4,5,-2}[The blp $p$ from Lemma~\ref{lem:dougherty_summary}.][fig:lemma_p1]
    % There is a realization~$P$ of~$p$ with $P.m=1$, and $P.\theta_{i}=\theta_i$ for each $i=0,1,\dots, 5$.
    % % \item Let $p_{start}$ be as in Figure~\ref{fig:p_start}. 
    % % \drawSequence{0.1}{0.5}{0.5}{0,1,2,-1,0,1,2,3,-1,0,1,2,3,4,-2,0,1,2,3,4,5,-2,2,3,4,5,6,-2}[The blp $p_{start}$.][fig:p_start]
    % There is a realization $P$ of~$p_{start}$ with $P.m=1$, and $P.\theta_{i}=\theta_i$ for each $i=0,1,\dots, 6$.
  % \end{enumerate}
% \end{lemma}

Let $p_{start}$ denote the blp depicted in Figure~\ref{fig:p_start}. 
    \drawSequence{0.1}{0.5}{0.5}{0,1,2,-1,0,1,2,3,-1,0,1,2,3,4,-2,0,1,2,3,4,5,-2,2,3,4,5,6,-2}[The blp $p_{start}$.][fig:p_start]

Lemma~\ref{lem:dougherty_summary} shows that there is a bls~$P_{start}$ realizing the blp $p_{start}$ with $P_{start}.m=1$, $P_{start}.\theta_0 = \kappa_0$, $P_{start}.\theta_1 = \kappa_1$, $P_{start}.\theta_2 = \kappa_2$, $P_{start}.\theta_3 = \kappa_3$, $P_{start}.\theta_4 = \kappa_2^7$, $P_{start}.\theta_5 = \kappa_3^{11}$, $P_{start}.\theta_6 = \kappa_3^{10}$. The witnessing embeddings are $j$, $j$, $j_{(11)}$, $j_{(11)}$ and $j_{(11)}(j_{(11)})$. 

Let $j'=j_{(11)}(j_{(11)})(j_{(11)}(j))(j_{(11)})$. Since
 \[j_{(11)}(j):\kappa_2\mapsto\kappa_3\mapsto\kappa_2^7,\] we get
  \[j_{(11)}(j_{(11)})(j_{(11)}(j)):\kappa_2^7\mapsto\kappa_3^{11}\mapsto\kappa_3^{10};\]  so 
  \[
    j'\colon (2) \,\, \kappa_0 \mapsto \kappa_1\mapsto \kappa_2 \mapsto \kappa_3 \mapsto \kappa_3^{11} \mapsto \kappa_3^{10}. 
  \]
  Thus, the embeddings $j,j,j_{(11)},j_{(11)},j'$ witness that there is a bls~$Q$ realizing the blp described in Figure~\ref{fig:bls_j_prime}, with the same cardinals as $P_{start}$, in particular, with last cardinal $\kappa_3^{10}$, and first two cardinals $\kappa_0$ and~$\kappa_1$.  

\drawSequence{0.1}{0.5}{0.5}{0,1,2,-1,0,1,2,3,-1,0,1,2,3,4,-2,0,1,2,3,4,5,-2,0,1,2,3,5,6,-2}[blp for the system witnessed by $j,j,j_{(11)},j_{(11)},j'$.][fig:bls_j_prime]

Applying the .M operation, we get the bls with blp as in Figure~\ref{fig:bls_after_m}, with the same last cardinal $\kappa_3^{10}$ (and first two cardinals $\kappa_0$ and $\kappa_1$). 

\drawSequence{0.1}{0.5}{0.5}{0,1,2,-1,0,1,2,3,-1,0,1,2,3,4,-2,0,1,2,3,4,5,-2,0,1,2,3,5,6,-3,0,1,2,3,5,6,7,-3,0,1,3,5,7,8,-2}[blp after applying the .M operation.][fig:bls_after_m]

Note that by our construction of the $M$ operation twice, the last two rows are realized by the same embedding, so we can add a pair of nodes in the last row according to the second last row.

This shows that there is a bls $P_{init}$ with last cardinal $\kappa^3_{10}$ (and first cardinals $\kappa_0$ and~$\kappa_1$) that realizes the blp $p_{init}$ described in Figure~\ref{fig:p_init}. 

\drawSequence{0.1}{0.5}{0.5}{0,1,2,-1,0,1,2,3,-1,0,1,2,3,4,-2,0,1,2,3,4,5,-2,0,1,2,3,5,6,-3,0,1,2,3,5,6,7,-3,0,1,2,3,5,6,7,8,-3}[The blp $p_{init}$.][fig:p_init]

% We call this blp $p_{init}$.

\begin{proof}[Proof of Proposition \ref{numberbound}(2)]
Applying Theorem~\ref{thm:f_is_lower_bound} to the bls $P_{init}$, we see that $\kappa_0,\kappa_1,\kappa_3^{10}$ are $f(p_{init},1)$-lined. Now apply Lemma~\ref{lem:doughery_2_n_lemma}.
\end{proof}

\begin{question}
    Give a good evaluation of $f(p_{init},1)$.
\end{question}

Now, by direct computation, $p' := p_{start}.del.M.M.M.M.E(1).E(1).E(1)$ is as in Figure~\ref{fig:p_prime}.

\drawSequence{0.1}{0.5}{0.5}{0,1,2,-1,0,1,2,3,-1,0,1,2,3,4,-2,0,1,2,3,4,5,-3,0,1,2,5,6,-2,0,1,2,6,7,-2}[The blp $p' = p_{start}.del.M.M.M.M.E(1).E(1).E(1)$.][fig:p_prime]

\begin{definition} \label{def:canonical_blps_up_to_omega_plus_2} 
We define blps $p_\alpha$ for ordinals $\alpha<\omega+\omega$. 
  \begin{enumerate}
    \item $p_0$ is the zero blp (the unique blp of length 2).
    \item Suppose that $p_\alpha$ is defined; let $n = p_\alpha.n$ be the length of $p_\alpha$. Then $p_{\alpha+1}$ is the blp obtained from $p_\alpha$ by adding the row $(0,1,2,n+1,n+2)$ at the end. 
    \item $p_\omega$ is the blp depicted in Figure~\ref{fig:p_omega}.
  \end{enumerate}  
\end{definition}

% We define $p_0$ as the unique blp with length 2. $p_{\alpha+1}$ is defined by adding row $(0, 1, 2, n+1, n+2)$ with step length 2 to $p_\alpha$, where $n=p_\alpha.n$. Let $p_\omega$ be as in Figure~\ref{fig:p_omega}.

\drawSequence{0.1}{0.5}{0.5}{0,1,2,-1,0,1,2,3,-1,0,1,2,3,4,-2,0,1,2,3,4,5,-3}[The blp $p_\omega$.][fig:p_omega]

Recall that~$m_\alpha$ denote the Steinhaus-Moser functions, defined in the introduction. The definition of this sequence of functions, and the zero, successor, and limit cases in Definition~\ref{def:the_f_function}, give:

\begin{lemma} \label{lem:p_alpha_and_Steinhaus_Moser}
  For all $\alpha<\omega+\omega$, 
   \[
   2^{f(p_{\alpha}, n)} = m_{\alpha}(2^n). 
   \]
\end{lemma}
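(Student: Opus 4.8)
The plan is to argue by transfinite induction on $\alpha<\omega+\omega$, with inductive hypothesis ``$2^{f(p_\beta,k)}=m_\beta(2^k)$ for all $\beta<\alpha$ and all $k>0$''. Since $\omega$ is the only nonzero limit ordinal below $\omega+\omega$, the induction splits into the base case $\alpha=0$, the successor case, and the single limit case $\alpha=\omega$; these match clauses~1, 3, and~4 of Definition~\ref{def:the_f_function} and the three clauses of Definition~\ref{def:canonical_blps_up_to_omega_plus_2}. I would first note that every blp of successor or of limit type is copyable---its last row is applied to row~$1$ alone, and the relevant instance of $ap$ is defined---so that clause~2 of Definition~\ref{def:the_f_function} is never triggered on any $p_\alpha$. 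The base case $\alpha=0$ is immediate: $p_0$ is the zero blp, so $f(p_0,n)=n\cdot 2^n$, and $2^{n\cdot 2^n}=(2^n)^{2^n}=m_0(2^n)$.

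For the successor case $\alpha=\beta+1$, note that $p_{\beta+1}$ is obtained from $p_\beta$ by appending the length-$5$ row $(0,1,2,\,p_\beta.n+1,\,p_\beta.n+2)$, so it is of successor type with $p_{\beta+1}.del=p_\beta$, and clause~3 of Definition~\ref{def:the_f_function} gives $f(p_{\beta+1},m)=g^{2^m}(m)$ where $g(x):=f(p_\beta,x)$. The crux is the subclaim
\[
  2^{g^{j}(k)} = m_\beta^{\,j}(2^k)\qquad\text{for all }j\ge 1,\ k>0,
\]
proved by induction on~$j$: for $j=1$ this is the inductive hypothesis $2^{g(k)}=2^{f(p_\beta,k)}=m_\beta(2^k)$, and the step combines the hypothesis in the form $2^{g(x)}=m_\beta(2^x)$ with the inner hypothesis to give $2^{g^{j+1}(k)}=m_\beta\!\bigl(2^{g^{j}(k)}\bigr)=m_\beta\!\bigl(m_\beta^{\,j}(2^k)\bigr)=m_\beta^{\,j+1}(2^k)$. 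Specializing to $j=2^n$ and $k=n$ yields $2^{f(p_{\beta+1},n)}=m_\beta^{\,2^n}(2^n)=m_{\beta+1}(2^n)$.

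The limit case $\alpha=\omega$ carries the real content. By clause~4 of Definition~\ref{def:the_f_function}, $f(p_\omega,n)=f\bigl(p_\omega.E(n),n\bigr)$, so it suffices to prove the identity
\[
  p_\omega.E(m)=p_{2^m}\qquad\text{for all }m\ge 0;
\]
granting it, the inductive hypothesis applied at the finite ordinal $2^n<\omega$ gives $2^{f(p_{2^n},n)}=m_{2^n}(2^n)=m_\omega(2^n)$, the last step being the defining clause $m_\omega(k)=m_k(k)$, and we are done. I would prove the identity by induction on~$m$, unwinding Definition~\ref{def:operation_E}. For $p_\omega$ one has $a=s_{4,-3}=3$, so $p_\omega.E(0)=p_\omega.del=p_1=p_{2^0}$. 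For the step: by Remark~\ref{rmk:the_length_of_the_E_blp}, $p_\omega.E(m)$ has length $2^m+2$, and by the inductive hypothesis its rows $3,\dots,2^m+2$ are $(0,1,2,j,j+1)$ for $3\le j\le 2^m+2$, each with step length~$2$; forming $q_{m+1}$ appends the length-$3$ row $(3,\,2^m+3,\,2^m+4)$, and the copying of that row has range $a=3$, $b=2^m+2$, so it duplicates exactly those tail rows. A short computation gives $ap\bigl((0,1,2,j,j+1),(3,2^m+3,2^m+4),1\bigr)=(0,1,2,\,2^m+j,\,2^m+j+1)$ with step length~$2$, so appending these copies to $p_{2^m}$ produces precisely the rows $(0,1,2,j,j+1)$ for $2^m+3\le j\le 2^{m+1}+2$, i.e.\ $p_\omega.E(m+1)=p_{2^{m+1}}$.

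The main obstacle is this last identity $p_\omega.E(m)=p_{2^m}$: verifying it demands careful bookkeeping through the $Copied$ and $ap$ operations inside the recursion defining $E$---checking that the copy-range indices $a,b$ pick out exactly the tail rows at each stage and that step lengths are tracked correctly---whereas everything else reduces to the short arithmetic above together with the Steinhaus--Moser recursion.
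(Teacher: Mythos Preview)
Your proof is correct and follows exactly the approach the paper intends: the paper simply states that the identity follows from the zero, successor, and limit clauses of Definition~\ref{def:the_f_function} together with the Steinhaus--Moser recursion, and you have supplied precisely that verification (including the key combinatorial identity $p_\omega.E(m)=p_{2^m}$, which the paper leaves implicit). There is nothing to add.
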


\begin{proof}[Proof of Proposition~\ref{numberbound}(1)]
  The blp~$p'$ (Figure~\ref{fig:p_prime}) is $p_{\omega+2}$. Recall the bls~$P_{start}$ defined following Lemma~\ref{lem:dougherty_summary}. Then $P_{start}.del$ is a bls realizing $p_{start}.del$ with largest cardinal~$\kappa_3^{11}$, and parameter $m=1$. Applying the sequence of operations $M$ and $E$ to this blp yields a bls $P_{\omega+2}$ with the same largest cardinal (and smallest cardinals $\kappa_0$ and $\kappa_1$) realizing $p_{\omega+2}$ with parameter $m=1$. By Theorem~\ref{thm:f_is_lower_bound}, $(\kappa_0,\kappa_1,\kappa_3^{11})$ are $f(p_{\omega+2},1)$-lined. By Lemma~\ref{lem:doughery_2_n_lemma}, $\crit^*(j)$ contains at least 
  \[
    2^{f(p_{\omega+2},1)} = m_{\omega+2}(2)
  \]
  many cardinals below $\kappa_3^{11}$, i.e., $\kappa_3^{11}\ge \gamma_{m_{\omega+2}(2)}$, as required. 
\end{proof}

% We also observe that the blp~$p'$ (Figure~\ref{fig:p_prime}) is $p_{\omega+2}$. 
% Then by definition, for all $\alpha \le \omega + 2$, $2^{f(p_{\alpha}, n)} = m_{\alpha}(2^n)$, and $p'=p_{\omega+2}$. Thus, we have $\kappa^{11}_3 \ge \gamma_{2^{f(p_{init}.del, 1)}} = \gamma_{2^{f(p', 1)}} = \gamma_{m_{\omega+2}(2)}$.  This solves the second part of Proposition~\ref{numberbound}.

\subsubsection*{Peano arithmetic}
\label{subsec:PA}

We now turn to the proof of Theorem~\ref{thm:peano_arithmetic}. Along the way we record a stronger intermediate lower bound which we expect to generalize beyond PA.

Recall the blp $p_{init}$ from Figure~\ref{fig:p_init}. Define
\[
p_{BO}\ :=\ p_{init}.{\mathrm{del}}.{\mathrm{del}}.\,E(1).\,M.\,{\mathrm{del}}.\,M.\,{\mathrm{del}}.{\mathrm{del}}.\,E(1).\,M.\,{\mathrm{del}}.{\mathrm{del}},
\]
see Figure~\ref{fig:p_bo}. Applying the same sequence of operations to the bls $P_{init}$ (constructed after Lemma~\ref{lem:dougherty_summary}) yields a bls $P_{BO}$ realizing $p_{BO}$ with  $P_{BO}.m=1$, and last cardinal
\[
P_{BO}.\theta_{-1}\ <\ \kappa_3^{10}.
\]

% --- Figure: p_BO
\drawSequence{0.1}{0.5}{0.5}{%
0,1,2,-1,%
0,1,2,3,-1,%
0,1,2,3,4,-2,%
0,1,2,3,4,5,-2,%
0,1,2,5,6,-2,%
2,5,6,7,-1,%
0,1,2,5,7,8,-3
}[The blp $p_{BO}$.][fig:p_bo]

\begin{lemma}[simple lining witnesses for $j$]\label{lem:simple_n_lined}
For each $n>0$ there exist embeddings
\[
j,\ j_{(2)}(j),\ (j_{(2)})^2(j),\ \ldots,\ (j_{(2)})^{n-1}(j)\ \in \Iter(j)
\]
witnessing that $\kappa_0,\kappa_1,\kappa_{n+1}$ are $n$-lined.
\end{lemma}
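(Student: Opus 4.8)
The plan is to determine the action of the embeddings $(j_{(2)})^{k}(j)$ on the critical sequence $\kappa_0,\kappa_1,\dots$ and then read off suitable intermediate cardinals directly from Definition~\ref{def:m-lined}. First I would record two basic facts. By definition of the $\kappa_i$ we have $j\colon\kappa_0\mapsto\kappa_1\mapsto\kappa_2\mapsto\cdots$, and applying Lemma~\ref{lem:l_k_application} to this chain gives $j_{(2)}=j(j)\colon\kappa_1\mapsto\kappa_2\mapsto\kappa_3\mapsto\cdots$ (as already observed in the text). In particular $\crit(j_{(2)})=\kappa_1$, so $j_{(2)}$ fixes $\kappa_0$ and sends $\kappa_m\mapsto\kappa_{m+1}$ for every $m\ge 1$. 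All embeddings appearing below lie in $\Iter(j)$, since $\Iter(j)$ is closed under the application operation.

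The heart of the argument is the claim that for every $k\ge 0$,
\[
  (j_{(2)})^{k}(j)\colon\ \kappa_0\ \mapsto\ \kappa_{k+1}\ \mapsto\ \kappa_{k+2}\ \mapsto\ \kappa_{k+3}\ \mapsto\ \cdots;
\]
equivalently, $\crit\bigl((j_{(2)})^{k}(j)\bigr)=\kappa_0$, it maps $\kappa_0\mapsto\kappa_{k+1}$, and it maps $\kappa_m\mapsto\kappa_{m+1}$ for all $m\ge k+1$. I would prove this by induction on $k$, applying Lemma~\ref{lem:l_k_application} to finite initial segments of the chain. The base case $k=0$ is the behavior of $j$ recorded above. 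For the inductive step, $(j_{(2)})^{k+1}(j)=j_{(2)}\bigl((j_{(2)})^{k}(j)\bigr)$, so Lemma~\ref{lem:l_k_application} applied to the inductive hypothesis gives
\[
  (j_{(2)})^{k+1}(j)\colon\ j_{(2)}(\kappa_0)\ \mapsto\ j_{(2)}(\kappa_{k+1})\ \mapsto\ j_{(2)}(\kappa_{k+2})\ \mapsto\ \cdots.
\]
Since $j_{(2)}$ fixes $\kappa_0$ and sends $\kappa_m\mapsto\kappa_{m+1}$ for every $m\ge 1$, and $k+1\ge 1$, the right side equals $\kappa_0\mapsto\kappa_{k+2}\mapsto\kappa_{k+3}\mapsto\cdots$, which is exactly the claim for $k+1$; the values of $(j_{(2)})^{k}(j)$ on $\kappa_1,\dots,\kappa_k$ play no role.

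Granting the claim, the lemma follows immediately. Set $\theta'_i:=\kappa_{i+1}$ for $i=0,1,\dots,n$, so that $\kappa_1=\theta'_0<\theta'_1<\cdots<\theta'_n=\kappa_{n+1}$ because the critical sequence is strictly increasing, and for $1\le i\le n$ put $j_i:=(j_{(2)})^{i-1}(j)$. By the claim (with $k=i-1$) we have $j_i\colon\kappa_0\mapsto\kappa_i\mapsto\kappa_{i+1}$, i.e.\ $j_i\colon\kappa_0\mapsto\theta'_{i-1}\mapsto\theta'_i$, so Definition~\ref{def:m-lined} is satisfied and $\kappa_0,\kappa_1,\kappa_{n+1}$ are $n$-lined; moreover the list $j_1,\dots,j_n$ is precisely $j,\ j_{(2)}(j),\ (j_{(2)})^2(j),\ \dots,\ (j_{(2)})^{n-1}(j)$. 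There is no real obstacle here: the only point that needs attention is the bookkeeping of critical points — namely that $\crit(j_{(2)})=\kappa_1>\kappa_0$, which is what pins the critical point of $(j_{(2)})^{k}(j)$ at $\kappa_0$ at every stage rather than letting it drift upward.
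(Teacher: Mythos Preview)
Your proof is correct and follows essentially the same approach as the paper: both establish $(j_{(2)})^{k}(j)\colon\kappa_0\mapsto\kappa_{k+1}\mapsto\kappa_{k+2}$ by repeatedly applying $j_{(2)}$ (which fixes $\kappa_0$ and shifts the higher $\kappa_m$) to $j$, and then read off the $n$-lined witnesses. Your version is simply more explicit than the paper's one-line sketch, and you invoke Lemma~\ref{lem:l_k_application} where the paper cites the slightly more general Lemma~\ref{lem:lk_application_general}, which makes no difference here since the step length is~$1$.
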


\begin{proof}
Note that $j_{(2)}:\kappa_1\mapsto\kappa_2\mapsto\kappa_3\mapsto\cdots$, hence $j_{(2)}(j):\kappa_0\mapsto\kappa_2\mapsto\kappa_3$, and by repeated application of Lemma~\ref{lem:lk_application_general} we obtain the required chain of embeddings, one for each step from $\kappa_1$ to $\kappa_{n+1}$.
\end{proof}

We shall use the following notion (a special case of Definition~\ref{def:E_operation_on_bls}).

\begin{definition}[Resetting the linedness parameter]
Let $P$ be a bls. We say that a bls $P'$ \emph{resets the linedness of $P$ to $n$} if $P'.p=P.p$, $P'.\theta_i=P.\theta_i$ for $i=0,1,-1$, and $P'.m=n$. By Lemma~\ref{lem:moving_any_theta_i_to_theta_j} and Lemma~\ref{lem:simple_n_lined}, for any given $n$ such a reset exists whenever there are embeddings that line $(P.\theta_0,P.\theta_1,P.\theta_2)$ to $(P.\theta_0,P.\theta_1,P.\theta_{n+1})$; in particular we can reset $P_{BO}$ to any prescribed $n>0$ while preserving the inequality $P_{BO}.\theta_{-1}<\kappa_{n+3}$ described below.
\end{definition}

\begin{lemma}\label{lem:move_kappa_nplus3}
For each $n\ge 1$ there is $h\in \Iter(j)$ with
\[
h:\ (2)\ \kappa_0 \mapsto \kappa_1 \mapsto \kappa_{n+1} \mapsto \kappa_{n+2}.
\]
Consequently, after resetting linedness to $n$ as above, we obtain a bls $\widehat P_{BO}^{(n)}$ with
\[
\widehat P_{BO}^{(n)}.p=p_{BO},\qquad \widehat P_{BO}^{(n)}.m=n,\qquad \widehat P_{BO}^{(n)}.\theta_{-1}<\kappa_{n+3}.
\]
\end{lemma}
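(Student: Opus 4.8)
The plan is to establish the two assertions in turn: the existence of $h$ is a short iterated-application computation, and the ``consequently'' clause is then an instance of the Resetting-the-linedness construction applied to $P_{BO}$ and fed with this $h$. I will assume $n\ge 2$; for $n=1$ the displayed $(2)$-pattern is degenerate and the second assertion is witnessed by $P_{BO}$ itself, which already has $m=1$.

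For the embedding $h$: start from $j_{(2)}$, which satisfies $j_{(2)}:\kappa_1\mapsto\kappa_2\mapsto\kappa_3\mapsto\cdots$ as recorded above, and apply $j$ to it. By Lemma~\ref{lem:l_k_application}, $g:=j(j_{(2)})\in\Iter(j)$ satisfies $g:\kappa_2\mapsto\kappa_3\mapsto\kappa_4\mapsto\cdots$; equivalently $\crit(g)=\kappa_2$ and $g(\kappa_i)=\kappa_{i+1}$ for every $i\ge 2$. Since $\Iter(j)$ is closed under composition, $h:=g^{\,n-1}\in\Iter(j)$, and an induction on the exponent gives $\crit(h)=\kappa_2$ and $h(\kappa_i)=\kappa_{i+n-1}$ for all $i\ge 2$. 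In particular $h(\kappa_2)=\kappa_{n+1}$ and $h(\kappa_4)=\kappa_{n+3}$, which unwinds to exactly $h:(2)\,\kappa_2\mapsto\kappa_4\mapsto\kappa_{n+1}\mapsto\kappa_{n+3}$.

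For the ``consequently'' clause: apply $h$ to every row-realizing embedding $P_{BO}.j_i$. By Lemma~\ref{lem:lk_application_general} the embeddings $h(P_{BO}.j_i)$ realize the same blp $p_{BO}$ with cardinals the $h$-images of those of $P_{BO}$; since $P_{BO}.\theta_0=\kappa_0$ and $P_{BO}.\theta_1=\kappa_1$ lie below $\crit(h)=\kappa_2$ they are fixed, while $P_{BO}.\theta_2=\kappa_2$ becomes $\kappa_{n+1}$. Attach to this the $n$-lining of $(\kappa_0,\kappa_1,\kappa_{n+1})$ supplied by Lemma~\ref{lem:simple_n_lined} (legitimate, since the only constraint on the linedness data of a bls is that it witness $m$-linedness of $(\theta_0,\theta_1,\theta_2)$, with Lemma~\ref{lem:moving_any_theta_i_to_theta_j} providing any auxiliary moving embeddings needed). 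This is precisely the Resetting construction referred to in the statement, and it yields $\widehat P_{BO}^{(n)}$ with $\widehat P_{BO}^{(n)}.p=p_{BO}$ and $\widehat P_{BO}^{(n)}.m=n$.

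Finally, for the bound on the last cardinal: $\widehat P_{BO}^{(n)}.\theta_{-1}=h(P_{BO}.\theta_{-1})$, and $P_{BO}.\theta_{-1}<\kappa_3^{10}<\kappa_4$ (the first inequality is the bound recorded when $P_{BO}$ was introduced; the second is Dougherty's chain $\kappa_3<\kappa_3^{11}<\kappa_3^{10}<\kappa_3^9<\kappa_4$). Since $h$ is an elementary embedding, hence strictly monotone on the ordinals, $h(P_{BO}.\theta_{-1})<h(\kappa_4)=\kappa_{n+3}$, as required. The step I expect to need genuine care is the claim $P_{BO}.\theta_2=\kappa_2$, i.e.\ that none of the $\mathrm{del}$, $E(1)$, $M$ operations used to build $p_{BO}$ from $p_{init}$ inserts a new cardinal to the left of column~$2$; this is what makes $h$ (built around $\kappa_2$) the correct transport embedding and the Lemma~\ref{lem:simple_n_lined} lining the correct linedness data. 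Tracking the column-$2$ cardinal through these operations is routine --- each inserts cardinals only to the right of the column it acts on, and row~$1=(0,1,2)$ is never suitable --- but it is the place where the argument must be checked carefully; everything else is handled uniformly by Lemma~\ref{lem:lk_application_general}.
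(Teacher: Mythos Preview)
Your proof is correct and follows the same strategy as the paper: construct an $h\in\Iter(j)$ with critical point $\kappa_2$ that shifts each $\kappa_i$ (for $i\ge 2$) by $n-1$, then push $P_{BO}$ through $h$ and reset the linedness via Lemma~\ref{lem:simple_n_lined}. The only cosmetic difference is the choice of $h$: you take $h=(j(j_{(2)}))^{n-1}$, while the paper takes $h=j(j(j^{\,n-1}))$; both have the same action on the $\kappa_i$'s, and your version has the advantage of matching the displayed pattern $\kappa_2\mapsto\kappa_4\mapsto\kappa_{n+1}\mapsto\kappa_{n+3}$ exactly (the paper's proof writes $\kappa_3$ in place of $\kappa_4$, a typo).
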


\begin{proof}
Let $j^{n}$ denote the $n$-fold composition of $j$. Then $j(j(j^{\,n-1}))$ has the displayed behavior on the critical sequence. Concretely, take the critical sequence of \(j^{\,n-1}\),
\[
  j^{\,n-1}:(2)\,\kappa_0 \mapsto \kappa_1 \mapsto \kappa_{n-1} \mapsto \kappa_n,
\]
and then apply \(j(j(\,\cdot\,))\) to it, yielding
\[
  j\bigl(j(j^{\,n-1})\bigr):(2)\,\kappa_2 \mapsto \kappa_3 \mapsto \kappa_{n+1} \mapsto \kappa_{n+2},
\]
applying it on $j$, we get what was required. 

The reset statement follows by combining Lemma~\ref{lem:simple_n_lined} with the definition above. 
\end{proof}

\begin{corollary}\label{cor:F-lower-bound-BO}
For all $n>0$ we have
\[
F(n+3)\ >\ f(p_{BO},n).
\]
\end{corollary}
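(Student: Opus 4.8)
The plan is to run the machinery established in the previous section — Theorem~\ref{thm:f_is_lower_bound}, Lemma~\ref{lem:doughery_2_n_lemma}, and the bookkeeping lemmas about resetting the linedness parameter — against the concrete bls $\widehat P_{BO}^{(n)}$ just produced in Lemma~\ref{lem:move_kappa_nplus3}. First I would fix $n>0$ and invoke Lemma~\ref{lem:move_kappa_nplus3} to obtain the bls $\widehat P_{BO}^{(n)}$ with $\widehat P_{BO}^{(n)}.p = p_{BO}$, $\widehat P_{BO}^{(n)}.m = n$, first two cardinals $\kappa_0$ and $\kappa_1$, and last cardinal $\widehat P_{BO}^{(n)}.\theta_{-1} < \kappa_{n+3}$. (The construction goes: start from $P_{init}$, which has last cardinal $\kappa_3^{10}$; apply the same sequence of $\mathrm{del}$, $E(1)$, $M$ operations defining $p_{BO}$ to get $P_{BO}$; then reset the linedness parameter to $n$ using the embedding $h$ of Lemma~\ref{lem:move_kappa_nplus3} together with Lemma~\ref{lem:simple_n_lined} and Lemma~\ref{lem:moving_any_theta_i_to_theta_j}, which does not change the blp or the first two or the last cardinal.)

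Next I would apply Theorem~\ref{thm:f_is_lower_bound} to $\widehat P_{BO}^{(n)}$: this yields that $f(p_{BO}, n)$ is defined and that the triple $(\kappa_0, \kappa_1, \widehat P_{BO}^{(n)}.\theta_{-1})$ is $f(p_{BO}, n)$-lined. Then Lemma~\ref{lem:doughery_2_n_lemma} gives $\widehat P_{BO}^{(n)}.\theta_{-1} \ge \gamma_{2^{f(p_{BO},n)}}$. Finally, since $\widehat P_{BO}^{(n)}.\theta_{-1} < \kappa_{n+3} = \gamma_{F(n+3)}$ and the $\gamma$'s are strictly increasing, we get $\gamma_{2^{f(p_{BO},n)}} \le \widehat P_{BO}^{(n)}.\theta_{-1} < \gamma_{F(n+3)}$, hence $2^{f(p_{BO},n)} < F(n+3)$, and a fortiori $F(n+3) > f(p_{BO}, n)$, which is the claimed bound (one could state the sharper $F(n+3) > 2^{f(p_{BO},n)}$, but the weaker form is what is asserted).

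The only real content beyond quoting earlier results is verifying that the sequence of operations defining $p_{BO}$ can in fact be carried out on the bls side — i.e., that at each stage the relevant blp really is copyable / of limit or successor type so that $M$ and $E(1)$ are legal — and that resetting the linedness does not disturb the inequality $\widehat P_{BO}^{(n)}.\theta_{-1} < \kappa_{n+3}$. The first point is guaranteed by the construction of $p_{BO}$ as an explicit composition of well-defined operations (Figure~\ref{fig:p_bo}); the operations on bls's were defined to track those on blps by Lemma~\ref{lem:operations_reduce_rank} and the accompanying remarks, and crucially they all preserve the last cardinal ($P.M.\theta_{-1} = P.\theta_{-1}$ and $P.E.\theta_{-1} = P.\theta_{-1}$ by Lemma~\ref{lem:facts_about_P.E} and Remark~\ref{rmk:last_embedding_of_P.M}), and $\mathrm{del}$ only decreases it; so $P_{BO}.\theta_{-1} \le \kappa_3^{10} < \kappa_4 \le \kappa_{n+3}$ for $n \ge 1$, and this is preserved by the reset. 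So no step here is genuinely hard.

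The expected main obstacle is purely one of careful verification rather than of idea: one must check that the specific figure $p_{BO}$ obtained by "direct computation" from $p_{init}$ via the written sequence of operations is correct, and that the embedding $h$ of Lemma~\ref{lem:move_kappa_nplus3} together with Lemma~\ref{lem:simple_n_lined} genuinely lines $(\kappa_0,\kappa_1,\kappa_2)$ to $(\kappa_0,\kappa_1,\kappa_{n+1})$ in the precise technical sense required for a reset (so that the reset bls is a legitimate realization of $p_{BO}$ with parameter $n$). Once those bookkeeping facts are in hand, the corollary is immediate from Theorem~\ref{thm:f_is_lower_bound} and Lemma~\ref{lem:doughery_2_n_lemma} as above.
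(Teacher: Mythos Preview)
Your proposal is correct and follows essentially the same route as the paper: invoke Lemma~\ref{lem:move_kappa_nplus3} to produce $\widehat P_{BO}^{(n)}$, apply Theorem~\ref{thm:f_is_lower_bound} to get that $(\kappa_0,\kappa_1,\widehat P_{BO}^{(n)}.\theta_{-1})$ is $f(p_{BO},n)$-lined, and then conclude via Lemma~\ref{lem:doughery_2_n_lemma}. The paper's proof is a two-line compression of exactly this argument (and, as you note, it actually yields the sharper bound $F(n+3)>2^{f(p_{BO},n)}$).
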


\begin{proof}
By Lemma~\ref{lem:move_kappa_nplus3}, $(\kappa_0,\kappa_1,\widehat P_{BO}^{(n)}.\theta_{-1})$ are $f(p_{BO},n)$-lined by Theorem~\ref{thm:f_is_lower_bound}, hence $\kappa_{n+3}>\gamma_{2^{f(p_{BO},n)}}$ by Lemma~\ref{lem:doughery_2_n_lemma}. 
\end{proof}

\paragraph{The $\varepsilon_0$ pattern.}
Define the blp
\[
p_{\varepsilon_0}\ :=\ p_{BO}.{\mathrm{del}}.M.M.{\mathrm{del}.M},
\]
see Figure~\ref{fig:p_epsilon0}. Applying the same operations to $P_{BO}$ yields a bls $P_{\varepsilon_0}$ with $P_{\varepsilon_0}.p=p_{\varepsilon_0}$ and whose first three cardinals are
\[
P_{\varepsilon_0}.\theta_0=\kappa_0,\qquad P_{\varepsilon_0}.\theta_1=\kappa_1,\qquad P_{\varepsilon_0}.\theta_2=\kappa_2.
\]

% --- Figure: p_{epsilon0}
\drawSequence{0.1}{0.5}{0.5}{%
0,1,2,-1,%
0,1,2,3,-1,%
0,1,2,3,4,-2,%
0,1,2,3,4,5,-2,%
0,1,2,5,6,-2,%
0,1,2,5,6,7,-3,%
5,6,7,8,-1,%
0,1,2,7,8,9,-3
}[The blp $p_{\varepsilon_0}$.][fig:p_epsilon0]

From $P_{\varepsilon_0}$ we extract a shorter pattern $p_{ep}$ as in Figure~\ref{fig:p_{ep}} by the following construction. Let $k:(1)\,\theta_3\mapsto\theta_5$ (Lemma~\ref{lem:moving_any_theta_i_to_theta_j}). Replacing the middle block of embeddings, $j_2,j_3,j_4$, by $k(j_2)$, we obtain a bls $P_{ep}$ with $P_{ep}.p=p_{ep}$ and the same initial cardinals.

% --- Figure: p_{ep}
\drawSequence{0.1}{0.5}{0.5}{%
0,1,2,-1,%
0,1,2,3,-1,%
0,1,2,3,4,-2,%
0,1,2,3,4,5,-3,%
3,4,5,6,-1,%
0,1,2,5,6,7,-3
}[The blp $p_{ep}$.][fig:p_{ep}]

% \medskip

For $n\ge 1$ we will need explicit blps exhibiting a long terminal block of uniform step length~$3$.

\begin{definition}[The canonical $q_n$]\label{def:q_n}
For $n\ge 1$ let $q_n$ be the blp with $2^n+4$ rows defined as follows.
\begin{itemize}
  \item Rows $1$–$3$ agree with $p_{ep}$ (including step lengths).
  \item For $i=4,\dots,2^n+4$, the $i$-th row is
  \[
  s_i=(0,1,2,i-1,i,i+1),\qquad \ell_i=3.
  \]
\end{itemize}
\end{definition}

\begin{lemma}\label{lem:Qn-exists}
Fix $n\ge 1$. Let $P_n$ be a bls which realizes $p_{ep}$, has $P_n.m=n$, and satisfies $P_n.\theta_{-1}<\kappa_{n+3}$ (Whose existence is clear from the previous discussion). Then there exists a bls $Q_n$ such that:
\begin{enumerate}
\item $Q_n.m=n$;
\item $Q_n.p=q_n$
\item $Q_n.\theta_{-1}\le P_n.\theta_{-1}<\kappa_{n+3}$.
\end{enumerate}
\end{lemma}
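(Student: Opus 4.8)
The plan is to build $Q_n$ from $P_n$ by an $E$-style construction that converts the single suitable terminal block of $p_{ep}$ into a long uniform staircase of step length~$3$, while keeping control of the last cardinal. First I would inspect $p_{ep}$ (Figure~\ref{fig:p_{ep}}): its last row is $(0,1,2,5,6,7)$ with step length~$3$, and the third-to-last entry of this row is $a = s_{n,-3}= 2$; thus $p_{ep}$ is of limit type in the sense of Definition~\ref{def:limit_and_successor_types}. Since $P_n.m = n$, the bls $P_n.E = P_n.E(n)$ is defined, realizes $p_{ep}.E(n)$, and by Lemma~\ref{lem:facts_about_P.E}(1) has $P_n.E.\theta_{-1} = P_n.\theta_{-1} < \kappa_{n+3}$. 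By Remark~\ref{rmk:the_length_of_the_E_blp}, with $a = 2$ and the length of $p_{ep}$ equal to~$6$, the length of $p_{ep}.E(n)$ is $2^n(6-2) + (2-1) = 2^{n+2}+1$; more importantly, because $a=2$ each application step copies all rows except the first (the "doubling effect" already used in Case~3 of the proof of Theorem~\ref{thm:f_is_lower_bound}), and the copied rows of length~$4$ become, at the next stage, three-entry rows of the form $(0,1,2,j,j+1,j+2)$ with step length~$3$, except near the seams where the rows of $p_{ep}$ that are not of this shape (rows $1$–$3$ and the "bridge" row) persist.

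The heart of the argument is a bookkeeping claim: after applying $E(n)$, the blp $p_{ep}.E(n)$ consists of an initial segment that still "looks like" rows $1$–$3$ of $p_{ep}$ (possibly together with a bounded number of transitional rows coming from the bridge row $(3,4,5,6)$), followed by a long terminal block of rows of the form $(0,1,2,i-1,i,i+1)$ with step length~$3$. To extract exactly $q_n$ I would then truncate with the $.del$ / $|n'$ operation to keep precisely the first three rows of $p_{ep}$ together with $2^n+1$ of the terminal step-$3$ rows, so that $Q_n := (P_n.E(n))\mid (2^n+4)$ has $Q_n.p = q_n$ with $2^n+4$ rows as required; by part~(1) of Lemma~\ref{lem:operations_reduce_rank}, each $.del$ only decreases the last cardinal, so $Q_n.\theta_{-1}\le P_n.E(n).\theta_{-1} = P_n.\theta_{-1}<\kappa_{n+3}$, giving~(3). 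For~(1), note that the $E$ operation on a bls preserves $m$ (Definition~\ref{def:E_operation_on_bls} sets $P.E(0) = P.del$ and reuses the linedness witnesses, so $P_n.E.m = P_n.m = n$), and $.del$ preserves $m$ as well; hence $Q_n.m = n$.

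The main obstacle I anticipate is the seam analysis in~(2): one has to check that, after the $n$ doubling steps, the rows produced around the junction between the "preserved" block (rows $1$–$3$ plus the bridge row $(3,4,5,6)$ of $p_{ep}$) and the emerging step-$3$ staircase line up exactly into the form $(0,1,2,i-1,i,i+1)$ with $\ell_i = 3$ for every $i = 4,\dots,2^n+4$ — in particular that the index shift bookkeeping in the $ap$ operation leaves no "extra" entries in the fourth column and that the first copied row is $(0,1,2,3,4,5)$. This is a finite, explicit computation of the kind already carried out in the Case~3 example (Figures~\ref{fig:case3_p}–\ref{fig:case3_pe}), and the key simplification is exactly that $a=2$, which forces all surviving long rows to retain the prefix $0,1,2$; I would verify the base shape $p_{ep}.E(1)$ by hand (it should already display the pattern, cf.\ the analogous computation $p_{start}.del.M.M.M.M.E(1).E(1).E(1)$ in Figure~\ref{fig:p_prime}) and then argue the inductive step $p_{ep}.E(i) \to p_{ep}.E(i+1)$ using Remark~\ref{rmk:applying_a_row_of_length_3} to describe the copying precisely. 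Once the shape of $p_{ep}.E(n)$ is pinned down, truncating to $q_n$ and reading off~(1)–(3) is immediate.
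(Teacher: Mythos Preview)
Your plan rests on a miscomputation of the parameter $a$ in Definition~\ref{def:operation_E}. The last row of $p_{ep}$ is $s_6=(0,1,2,5,6,7)$, so $s_{6,-3}$ is the \emph{fourth} entry, namely $5$, not the third entry $2$. (You seem to have read ``third-to-last'' as ``third''.) With the correct value $a=5$, Remark~\ref{rmk:the_length_of_the_E_blp} gives $p_{ep}.E(n)$ length $2^n(6-5)+(5-1)=2^n+4$, not $2^{n+2}+1$; more importantly, the $E$ step copies only row~$5$ at each stage, not ``all rows except the first'', so there is no doubling effect of the Case~3 kind. A direct computation (e.g.\ of $p_{ep}.E(1)$ and $p_{ep}.E(2)$) shows that the tail of $p_{ep}.E(n)$ consists of $2^n$ rows of length~$4$ and step length~$1$, of the shape $(3,4,i,i+1)$ --- not the length-$6$, step-$3$ rows $(0,1,2,i-1,i,i+1)$ that $q_n$ demands. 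Hence no truncation of $P_n.E(n)$ can realize $q_n$, and your proposed $Q_n:=(P_n.E(n))\mid(2^n+4)$ does not have the right pattern.

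The paper's proof begins, as you do, with $R^{(0)}:=P_n.E$, but then carries out an additional iterative ``expand/contract'' procedure to convert the length-$4$ tail rows into canonical length-$6$ rows one at a time. At stage $t$ the last remaining length-$4$ row is expanded via $.M$, producing one new canonical length-$6$ row together with a fresh block of length-$4$ rows; one then replaces that length-$6$ row by an explicit $k(e)$ (using Lemma~\ref{lem:lk_application_general}) and deletes the \emph{old} block of length-$4$ rows, shifting indices so that the invariant $\mathcal I(t+1)$ holds. After $2^n$ such steps the tail consists entirely of canonical rows and one reads off $Q_n$. This intermediate mechanism is the missing idea; the ``seam analysis'' you anticipated is not the obstacle --- the obstacle is that after $E$ the tail rows have the wrong length and step, and must be rebuilt.
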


\begin{proof}
Let $R^{(0)}:=P_n.E$. Then $R^{(0)}.m=n$ and $R^{(0)}.\theta_{-1}=P_n.\theta_{-1}$, and
the tail consists of \emph{exactly $2^n$} rows of length $4$ with step $1$:
\begin{equation}\tag{$\ast$}\label{eq:tail0}
(3,4,5,6),\ (3,4,6,7),\ \ldots,\ (3,4,2^n\!+\!4,\,2^n\!+\!5).
\end{equation}

We construct $R^{(t)}$ for $t=0,1,\dots,2^n$ so that the following statement $\mathcal I(t)$ holds:

\begin{description}
  \item[$\mathcal I(t)$:] $R^{(t)}.m=n$, $R^{(t)}.\theta_{-1}=P_n.\theta_{-1}$, the first three rows are those of $p_{ep}$, and the tail consists of:
\begin{itemize}
\item $t+1$ canonical length-$6$ rows (step $3$)
\[
(0,1,2,\,j+1,\,j+2,\,j+3)\quad\text{for } j=2,3,4,\dots,t+2;
\]
\item followed by the \textbf{remaining $2^n-t$} rows of length $4$ (step $1$), listed explicitly as
\begin{equation}\tag{$\ast_t$}\label{eq:tailt}
(t\!+\!3,t\!+\!4,t\!+\!5,t\!+\!6),\ (t\!+\!3,t\!+\!4,t\!+\!6,t\!+\!7),\ \ldots,\ (t\!+\!3,t\!+\!4,2^n\!+\!4,\,2^n\!+\!5).
\end{equation}
\end{itemize}
\end{description}

The case $t=0$ is \eqref{eq:tail0}. Assume $\mathcal I(t)$ for some $t<2^n$.

\smallskip
\textbf{Expand}
Apply $.\!M$ to the \emph{last} row of \eqref{eq:tailt}, namely
\[
\rho_{\mathrm{last}}=(t\!+\!3,t\!+\!4,2^n\!+\!4,2^n\!+\!5)\quad(\text{step }1).
\]
This yields:
\begin{itemize}
\item a canonical length-$6$ row
\[
\rho^\star=(0,1,2,\,t\!+\!4,\,2^n\!+\!4,\,2^n\!+\!5)\quad(\text{step }3),
\]
\item and a block of \(\mathbf{2^n-t-1}\) new length-$4$ rows (step $1$)
\[
(t\!+\!4,\,2^n\!+\!4,\,2^n\!+\!5,\,2^n\!+\!6),\
(t\!+\!4,\,2^n\!+\!4,\,2^n\!+\!6,\,2^n\!+\!7),\ \ldots.
\]
\end{itemize}
By Remark~\ref{rmk:last_embedding_of_P.M} and Lemma~\ref{lem:operations_reduce_rank}, $m$ and the last cardinal are preserved.

\smallskip
\textbf{Contract}
Let
\[
e:\ (0,1,2,\,t\!+\!3,\,t\!+\!4,\,t\!+\!5)
\]
be the canonical length-$6$ row already present in $R^{(t)}$.
Define the explicit length-$2$ embedding
\[
k:\ (1)\ \theta_{t+5}\longmapsto \theta_{2^n+4}.
\]
By Lemma~\ref{lem:lk_application_general}, applying $k$ to $e$ gives
\[
k(e):\ (0,1,2,\,t\!+\!3,\,t\!+\!4,\,2^n\!+\!4),
\]
.

Now \emph{delete the \(\mathbf{2^n-t-1}\) length-$4$ rows that were already present \underline{before} the expand}, namely the block
\[
\Bigl\{(t\!+\!3,t\!+\!4,t\!+\!5,t\!+\!6),\ (t\!+\!3,t\!+\!4,t\!+\!6,t\!+\!7),\ \ldots,\ (t\!+\!3,t\!+\!4,2^n\!+\!3,2^n\!+\!4)\Bigr\}.
\]
(These are exactly the first $2^n{-}t{-}1$ rows of \eqref{eq:tailt}; we \emph{do not} delete the newly created rows of the expand.)

Deleting those pre-existing embeddings removes the names for the intermediate cardinals they introduced between $t\!+\!5$ and $2^n\!+\!3$, so all indices $\ge \!t\!+\!5$ shift down by $2^n-t-1$. Consequently, the \emph{newly created} length-$4$ block becomes precisely
\[
(t\!+\!4,t\!+\!5,t\!+\!6,t\!+\!7),\ (t\!+\!4,t\!+\!5,t\!+\!7,t\!+\!8),\ \ldots,
\]
Together with the $t+1$ old canonical rows and $k(e)$, we obtain $R^{(t+1)}$ satisfying $\mathcal I(t+1)$. Throughout, $m$ and the last cardinal remain unchanged.

\smallskip
Iterating for $t=0,1,\ldots,2^n-1$ yields $R^{(2^n)}$ whose tail has $2^n$ canonical length-$6$ rows. Let $Q_n:=R^{(2^n)}$. Then (1) holds since each step preserves $m$; (2) holds because we have the first three rows of $p_{ep}$ plus $2^n+1$ canonical tail rows; and (3) follows from
\[
Q_n.\theta_{-1}=R^{(2^n)}.\theta_{-1}=\cdots=R^{(0)}.\theta_{-1}=P_n.\theta_{-1}<\kappa_{n+3}.
\qedhere \]
\end{proof}

By Lemma~\ref{lem:doughery_2_n_lemma} and Theorem~\ref{thm:f_is_lower_bound} we obtain:

\begin{corollary}\label{cor:Qn-lower}
For each $n\ge 1$,
\[
\kappa_{n+3}\ >\ \gamma_{\,2^{f(q_n,n)}}.
\]
\end{corollary}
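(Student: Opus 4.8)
The plan is to feed the bls $Q_n$ produced by Lemma~\ref{lem:Qn-exists} into the two ``output'' tools at our disposal, namely Theorem~\ref{thm:f_is_lower_bound} (which turns a realization of a blp into $f$-many levels of linedness) and Lemma~\ref{lem:doughery_2_n_lemma} (which converts $m$-linedness into a count of $2^m$ critical points below the top cardinal). The only bookkeeping needed is to keep the last cardinal of $Q_n$ strictly below $\kappa_{n+3}$, which Lemma~\ref{lem:Qn-exists} already hands us.

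In detail: fix $n\ge 1$. First I would invoke Lemma~\ref{lem:Qn-exists}, starting from a bls $P_n$ realizing $p_{ep}$ with $P_n.m=n$ and $P_n.\theta_{-1}<\kappa_{n+3}$ (such a $P_n$ exists by the discussion preceding that lemma: reset the linedness of $P_{ep}$ to $n$, using Lemma~\ref{lem:simple_n_lined} and Lemma~\ref{lem:move_kappa_nplus3}, noting $P_{ep}.\theta_{-1}<\kappa_4\le\kappa_{n+3}$ and that resetting preserves the last cardinal). This yields a bls $Q_n$ with $Q_n.m=n$, $Q_n.p=q_n$, and $Q_n.\theta_{-1}\le P_n.\theta_{-1}<\kappa_{n+3}$. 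Next I would apply Theorem~\ref{thm:f_is_lower_bound} to $Q_n$: it guarantees that $f(q_n,n)$ is defined and that the triple $(Q_n.\theta_0,Q_n.\theta_1,Q_n.\theta_{-1})$ is $f(q_n,n)$-lined. Lemma~\ref{lem:doughery_2_n_lemma} applied to this triple then gives $Q_n.\theta_{-1}\ge\gamma_{2^{f(q_n,n)}}$. Chaining with the sandwich inequality, $\gamma_{2^{f(q_n,n)}}\le Q_n.\theta_{-1}<\kappa_{n+3}$, so $\kappa_{n+3}>\gamma_{2^{f(q_n,n)}}$, which is the assertion. (Equivalently, since $\kappa_{n+3}=\gamma_{F(n+3)}$, this reads $F(n+3)>2^{f(q_n,n)}$.)

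I do not expect a genuine obstacle here: all the delicate combinatorics was already absorbed into the proof of Lemma~\ref{lem:Qn-exists} (the expand/contract iteration that builds the long uniform step-$3$ tail of $q_n$ while preserving $m$ and the last cardinal) and into Theorem~\ref{thm:f_is_lower_bound}. Two minor points deserve care in the write-up. First, the strictness of the final inequality is free: Lemma~\ref{lem:Qn-exists}(3) supplies the strict bound $Q_n.\theta_{-1}<\kappa_{n+3}$, whereas Lemma~\ref{lem:doughery_2_n_lemma} only needs the weak lower bound $\gamma_{2^{f(q_n,n)}}\le Q_n.\theta_{-1}$. Second, one should resist the temptation to invoke the identities $Q_n.\theta_0=\kappa_0$ and $Q_n.\theta_1=\kappa_1$: they do hold, since every operation used to pass from $P_n$ to $Q_n$ preserves $\theta_0$ and $\theta_1$ (Lemma~\ref{lem:operations_reduce_rank}, Remark~\ref{rmk:last_embedding_of_P.M}), but they are unnecessary, because Lemma~\ref{lem:doughery_2_n_lemma} is insensitive to which ordinals occupy the roles of $\theta_0$ and $\theta_1$ in the lined triple.
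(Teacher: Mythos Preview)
Your proposal is correct and matches the paper's approach exactly: the paper simply states that the corollary follows from Lemma~\ref{lem:doughery_2_n_lemma} and Theorem~\ref{thm:f_is_lower_bound} (applied to the bls $Q_n$ furnished by Lemma~\ref{lem:Qn-exists}), and you have spelled out precisely that chain of implications. Your additional remarks about the existence of $P_n$, the strictness of the inequality, and the irrelevance of the specific values of $Q_n.\theta_0,Q_n.\theta_1$ are all accurate and helpful elaborations on what the paper leaves implicit.
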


\paragraph{From $q_n$ to $\varepsilon_0$.}
To relate $f(q_n,n)$ to the fast-growing hierarchy below~$\varepsilon_0$, we formalize the ordinal bookkeeping used by the $E$/$M$ recursion.

\begin{definition}\label{def:alpha-n}
Every nonzero $\alpha<\varepsilon_0$ admits a unique decomposition $\alpha=\beta+\omega^\gamma$ with $\gamma\ge 0$ and $\beta<\alpha$.
Define $\alpha[n]$ recursively by:
\begin{itemize}
  \item $\bigl(\omega^{\gamma+1}\bigr)[n]\ :=\ \omega^\gamma\cdot n$;
  \item if $\gamma$ is limit, $\bigl(\omega^\gamma\bigr)[n]\ :=\ \omega^{\gamma[n]}$;
  \item $(\beta+\omega^\gamma)[n]\ :=\ \beta+\bigl(\omega^\gamma\bigr)[n]$.
\end{itemize}
\end{definition}

\begin{definition}[Pattern sequence $ps(\alpha)$]\label{def:ps}
Define a finite sequence $ps(\alpha)$ by recursion on $\alpha<\varepsilon_0$:
\begin{itemize}
  \item $ps(0)=()$;
  \item if $\alpha=\beta+1$, then $ps(\alpha)=ps(\beta)\,\frown\, 0$;
  \item if $\alpha=\beta+\omega^\gamma$ with $\gamma>0$, write $ps(\gamma)=(t_1,\ldots,t_\ell)$ and set
  \[
  ps(\alpha)=ps(\beta)\,\frown\, 0\,\frown\, \bigl(ps(\gamma)^{+(\,|ps(\beta)|+1\,)}\bigr)\,
  \]
  where $\sigma^{+(k)}$ denotes adding $k$ to every entry of the sequence $\sigma$.
\end{itemize}
\end{definition}

\begin{lemma}\label{lem:ps-basic}
Let $\alpha<\varepsilon_0$, and write $ps(\alpha)=(u_1,\ldots,u_k)$.
Then:
\begin{enumerate}
  \item $ps(\alpha)=()$ iff $\alpha=0$;
  \item for all $i$, $0\le u_i<i$;
  \item $u_k=0$ iff $\alpha$ is a successor, in which case $ps(\alpha-1)=(u_1,\ldots,u_{k-1})$;
  \item if $\alpha$ is limit, then for all $n\ge 1$, $ps\bigl(\alpha[n]\bigr)$ is obtained from $ps(\alpha)$ by replacing the final block according to Definition~\ref{def:alpha-n} (hence $ps\bigl(\alpha[n]\bigr)$ is an initial segment of $ps\bigl(\alpha[n+1]\bigr)$).
\end{enumerate}
\end{lemma}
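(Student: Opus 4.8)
The plan is to prove all four parts simultaneously by transfinite induction on $\alpha<\varepsilon_0$, using the decomposition $\alpha=\beta+\omega^\gamma$ of Definition~\ref{def:alpha-n}; since $\beta<\alpha$ and $\gamma<\omega^\gamma\le\alpha$, the inductive hypothesis applies to both $\beta$ and $\gamma$. Parts (1) and (3) should be disposed of first, being pure bookkeeping in the clauses of Definition~\ref{def:ps}: for $\alpha>0$ the applicable clause either appends the entry $0$ (successor case) or ends with the entry $|ps(\beta)|+1\ge 1$ (the $\gamma>0$ case), so $ps(\alpha)\neq()$; and the last entry of $ps(\alpha)$ is $0$ precisely in the successor case, where that clause reads $ps(\alpha)=ps(\alpha-1)\frown 0$.

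For Part (2) I would run the same induction and track indices. Writing $b=|ps(\beta)|$ and $\ell=|ps(\gamma)|$ (with $\ell\ge 1$ in the $\gamma>0$ case, by Part (1)), inside $ps(\alpha)=ps(\beta)\frown 0\frown\bigl(ps(\gamma)^{+(b+1)}\bigr)\frown(b+1)$ the first $b$ entries inherit their bound from $\beta$; the entry $0$ sits at position $b+1$; the entry $t_j+(b+1)$ coming from $ps(\gamma)$ sits at position $b+1+j$ and $t_j<j$ by the hypothesis for $\gamma$, so $t_j+(b+1)<b+1+j$; and the trailing entry $b+1$ sits at position $b+\ell+2>b+1$. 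The only point requiring attention is that the shift $\sigma\mapsto\sigma^{+(b+1)}$ of the entries is exactly cancelled by the positional shift $+(b+1)$ caused by prepending $ps(\beta)\frown 0$.

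Part (4) is the real content, and I would split it according to whether the exponent $\gamma$ is a successor or a limit. If $\gamma=\delta+1$ then $\alpha[n]=\beta+\omega^\delta\cdot n$; writing $\beta_j=\beta+\omega^\delta\cdot j$ and unfolding the recursion for adding $\omega^\delta$ (the successor clause when $\delta=0$, the $\gamma>0$ clause when $\delta>0$) gives $ps(\beta_{j+1})=ps(\beta_j)\frown B_j$ for a nonempty block $B_j$ beginning with $0$, so $ps(\alpha[n])=ps(\beta)\frown B_0\frown\cdots\frown B_{n-1}$ and $ps(\alpha[n+1])=ps(\alpha[n])\frown B_n$; the initial-segment claim is then immediate, and comparing $B_0$ with the block for adding $\omega^{\delta+1}$ that occurs inside $ps(\alpha)$ gives the precise sense in which the final block is replaced. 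If $\gamma$ is a limit then $\gamma[n]>0$ and $\alpha[n]=\beta+\omega^{\gamma[n]}$, so $ps(\alpha[n])=ps(\beta)\frown 0\frown\bigl(ps(\gamma[n])^{+(b+1)}\bigr)\frown(b+1)$; here I would invoke Part (4) for $\gamma<\alpha$ to control how $ps(\gamma[n])$ sits inside $ps(\gamma[n+1])$, and then transport that description through $\sigma\mapsto\sigma^{+(b+1)}$ and through the outer wrapper $\sigma\mapsto ps(\beta)\frown 0\frown\sigma\frown(b+1)$.

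The main obstacle is exactly this last transport in the limit-exponent case. The outer wrapper ends with the marker entry $|ps(\beta)|+1$, and since $|ps(\gamma[n])|$ grows with $n$ that marker slides to the right, so ``$ps(\alpha[n])$ is an initial segment of $ps(\alpha[n+1])$'' does not follow from the corresponding statement for $\gamma$ alone: one also needs to know that the first new entry of $ps(\gamma[n+1])$ is $0$, so that after the shift it equals the marker $b+1$. I would therefore strengthen the inductive hypothesis of Part (4) to record, for limit ordinals, the exact shape of the block by which $ps(\,\cdot\,[n])$ grows --- in particular that it begins with $0$ --- and check that this shape is preserved by both the successor-exponent and the limit-exponent recursion. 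This bookkeeping, not any single inequality, is where the care lies, and it is prudent to test the statement against $\alpha=\omega^2$, $\alpha=\omega^\omega$, and $\alpha=\omega^{\omega^\omega}$ while setting it up, since the alignment of the marker entry is not uniform across these cases and may require narrowing the claim to the ordinals actually produced by the $q_n$ construction.
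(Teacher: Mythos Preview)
Your overall plan---simultaneous transfinite induction on~$\alpha$, splitting Part~(4) on whether the exponent $\gamma$ in $\alpha=\beta+\omega^\gamma$ is a successor or a limit, and invoking the inductive hypothesis on~$\gamma$ in the limit case---is exactly the paper's route, and Parts~(1)--(3) are handled the same way.

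The divergence is your treatment of the trailing marker $|ps(\beta)|+1$ in the limit-exponent case, and here there is a real problem. Your proposed strengthening (``the first new entry of $ps(\gamma[n{+}1])$ is~$0$'') already fails at $\gamma=\omega^\omega$: reading Definition~\ref{def:ps} literally one gets $ps(\omega^n)=(0,1,\dots,1)$ with $n{+}1$ ones, so the new entry is~$1$, not~$0$. Worse, Part~(4) itself is \emph{false} under that literal reading: for $\alpha=\omega^{\omega^\omega}$ one computes $ps(\alpha[n])=(0,1,2,\dots,2,1)$ with $n{+}1$ twos, and position $n{+}4$ carries the marker~$1$ in $ps(\alpha[n])$ but the value~$2$ in $ps(\alpha[n{+}1])$, so these sequences are not nested. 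No strengthening of the hypothesis can repair this; your instinct to test against $\omega^{\omega^\omega}$ was the right one.

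The resolution is that Definition~\ref{def:ps} is mis-stated: the trailing $\frown(|ps(\beta)|+1)$ should be deleted. The paper's own proof silently uses the corrected form $ps(\alpha)=A\frown 0\frown ps(\gamma)^{+(a)}$ throughout, and when $\gamma=\delta{+}1$ simply unfolds $ps(\gamma)=ps(\delta)\frown 0$ to rewrite this as $A\frown 0\frown ps(\delta)^{+(a)}\frown a$. With no floating marker, the limit-exponent step of Part~(4) reduces to transporting initial-segmenthood through the shift $\sigma\mapsto\sigma^{+(a)}$ and the prefix $A\frown 0$, which is immediate. You can check that the corrected definition also agrees with the paper's examples (e.g.\ $ps(\omega)=(0,1)$, giving $p_\omega$ the four rows of Figure~\ref{fig:p_omega}), whereas the literal definition would give $ps(\omega)=(0,1,1)$.
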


\begin{proof}
We argue by induction on $\alpha<\varepsilon_0$, proving all four items simultaneously. 
Throughout, for a finite sequence $\sigma=(s_1,\dots,s_\ell)$ and $k\in\mathbb{N}$ we write $\sigma^{+(k)}:=(s_1+k,\dots,s_\ell+k)$. 
Recall Definitions~\ref{def:alpha-n} and~\ref{def:ps}. 
When $\alpha>0$, we write its Cantor-normal-form tail decomposition as
\[
\alpha=\beta+\omega^\gamma\qquad(\gamma\ge 0,\ \beta<\alpha).
\]
Let $A:=ps(\beta)$ and $a:=|A|+1$.

\smallskip
\noindent\textbf{(1) $ps(\alpha)=()$ iff $\alpha=0$.}
If $\alpha=0$, then $ps(0)=()$ by definition. 
Conversely, if $\alpha>0$ then either $\alpha=\beta+1$ and $ps(\alpha)=ps(\beta)\frown 0\neq()$, or $\alpha=\beta+\omega^\gamma$ with $\gamma>0$, in which case
\[
ps(\alpha)=ps(\beta)\ \frown\ 0\ \frown\ \bigl(ps(\gamma)^{+(a)}\bigr)\ \text{ or }\ 
ps(\alpha)=ps(\beta)\ \frown\ 0\ \frown\ \bigl(ps(\delta)^{+(a)}\bigr)\ \frown\ a,
\]
depending on whether $\gamma$ is limit or $\gamma=\delta+1$ (see Definition~\ref{def:ps}); in either case $ps(\alpha)\neq()$. 

\smallskip
\noindent\textbf{(2) For all entries $u_i$ of $ps(\alpha)=(u_1,\ldots,u_k)$ we have $0\le u_i<i$.}
We proceed by cases on~$\alpha$.

\emph{Base \(\alpha=0\):} Trivial.

\emph{Successor \(\alpha=\beta+1\):} Then $ps(\alpha)=ps(\beta)\frown 0$. 
By the induction hypothesis (IH) for $\beta$, the property holds for the first $|ps(\beta)|$ entries. 
The last entry is $0$ in position $k=|ps(\beta)|+1$, hence $0<k$.

\emph{Limit \(\alpha=\beta+\omega^\gamma\), \(\gamma>0\):} 
Write $A=ps(\beta)$ and $a=|A|+1$. There are two subcases.

\underline{Subcase \(\gamma=\delta+1\) (successor tail).}
By Definition~\ref{def:ps},
\[
ps(\alpha)=A\ \frown\ 0\ \frown\ \bigl(ps(\delta)^{+(a)}\bigr)\ \frown\ a.
\]
Indices and values:
\begin{itemize}
  \item For the initial block $A$, the claim follows from IH for $\beta$.
  \item The next entry is $0$ at position $|A|+1=a$, and $0<a$.
  \item For the block $ps(\delta)^{+(a)}$, write $ps(\delta)=(t_1,\dots,t_\ell)$. 
  The $j$-th element of this block sits at position $i=a+j$, and its value is $t_j+a$. 
  By IH for $\delta$, we have $t_j<j$, hence $t_j+a<j+a=i$.
  \item The last entry $a$ is at position $k=a+\ell+1$, so $a<k$.
\end{itemize}

\underline{Subcase \(\gamma\) limit.}
By Definition~\ref{def:ps},
\[
ps(\alpha)=A\ \frown\ 0\ \frown\ \bigl(ps(\gamma)^{+(a)}\bigr).
\]
The same argument as in the previous subcase applies to the initial block $A$ and the $0$ at position $a$. 
For the tail, write $ps(\gamma)=(t_1,\dots,t_\ell)$; the $j$-th tail entry is $t_j+a$ at position $i=a+j$, and by IH for $\gamma$ we have $t_j<j$, hence $t_j+a<j+a=i$. 
This shows $u_i<i$ for all entries.

\smallskip
\noindent\textbf{(3) The last entry $u_k$ equals $0$ iff $\alpha$ is a successor, and then $ps(\alpha-1)=(u_1,\dots,u_{k-1})$.}
If $\alpha=\beta+1$, then $ps(\alpha)=ps(\beta)\frown 0$ by definition, so $u_k=0$ and the prefix is $ps(\beta)=ps(\alpha-1)$. 
Conversely, if $\alpha$ is limit, then in either subcase of (2) the final entry is $a=|ps(\beta)|+1>0$ (successor-tail case) or the sequence ends with the tail $ps(\gamma)^{+(a)}$ whose last value is $t_\ell+a\ge a>0$ (limit-tail case). 
Thus $u_k\neq 0$ for limit~$\alpha$.

\smallskip
\noindent\textbf{(4) If $\alpha$ is limit then, for all $n\ge 1$, $ps(\alpha[n])$ is obtained from $ps(\alpha)$ by replacing its final block according to Definition~\ref{def:alpha-n}; in particular $ps(\alpha[n])$ is an initial segment of $ps(\alpha[n+1])$.}
Let $\alpha=\beta+\omega^\gamma$ with $\gamma>0$, and set $A=ps(\beta)$, $a=|A|+1$.

\underline{Subcase \(\gamma=\delta+1\).} 
By Definition~\ref{def:alpha-n}, $\alpha[n]=\beta+\omega^\delta\cdot n$. 
We claim, by induction on $n\ge 1$, that
\begin{multline}\label{eq:succ-tail}
ps(\alpha[n])\ =\ A\ \frown\ \bigl(\ 0\ \frown\ ps(\delta)^{+(a)}\ \bigr)\ \frown\ \bigl(\ 0\ \frown\ ps(\delta)^{+\,(a+(1+|ps(\delta)|))}\ \bigr)\ \frown\ \\ \cdots\ \frown\  \bigl(\ 0\ \frown\ ps(\delta)^{+\,(a+(n-1)(1+|ps(\delta)|))}\ \bigr).
\end{multline}
For $n=1$ we have $ps(\alpha[1])=ps(\beta+\omega^\delta)=A\frown 0 \frown ps(\delta)^{+(a)}$ by Definition~\ref{def:ps}, which is \eqref{eq:succ-tail}. 
Assume \eqref{eq:succ-tail} holds for $n$. Then
\[
\alpha[n+1]=\alpha[n]+\omega^\delta,
\]
and Definition~\ref{def:ps} yields
\[
ps(\alpha[n+1])\ =\ ps(\alpha[n])\ \frown\ 0\ \frown\ ps(\delta)^{+\,(|ps(\alpha[n])|+1)}.
\]
But $|ps(\alpha[n])|=|A|+n\cdot(1+|ps(\delta)|)=a-1+n(1+|ps(\delta)|)$, hence the new shift is $a+n(1+|ps(\delta)|)$, matching \eqref{eq:succ-tail}. 
Thus \eqref{eq:succ-tail} holds for all $n$. 
Comparing with the \emph{limit} pattern for $\alpha$ (which, in the successor-tail case, is 
\[
ps(\alpha)=A\ \frown\ 0\ \frown\ ps(\delta)^{+(a)}\ \frown\ a),
\]
we see that $ps(\alpha[n])$ is obtained from $ps(\alpha)$ by dropping the final $a$ and repeating the block $0\frown ps(\delta)^{+(*)}$ with stride $1+|ps(\delta)|$ exactly $n$ times; in particular $ps(\alpha[n])$ is an initial segment of $ps(\alpha[n+1])$.

\underline{Subcase \(\gamma\) limit.} 
By Definition~\ref{def:alpha-n}, $\alpha[n]=\beta+\omega^{\gamma[n]}$, so
\[
ps(\alpha[n])\ =\ A\ \frown\ 0\ \frown\ \bigl(ps(\gamma[n])^{+(a)}\bigr).
\]
By the induction hypothesis on the smaller ordinal $\gamma$ (applied to item~(4) itself), $ps(\gamma[n])$ is an initial segment of $ps(\gamma[n+1])$. 
Shifting by $a$ preserves initial segmenthood, hence $ps(\alpha[n])$ is obtained from $ps(\alpha)$ by replacing its last block $ps(\gamma)^{+(a)}$ with $ps(\gamma[n])^{+(a)}$, and $ps(\alpha[n])$ is an initial segment of $ps(\alpha[n+1])$.

\medskip
This completes the induction and the proof of all four items.
\end{proof}

The following definition extends Definition~\ref{def:canonical_blps_up_to_omega_plus_2}. 

\begin{definition}[Canonical blps $p_\alpha$ for $\alpha<\varepsilon_0$]\label{def:palpha}
Let $ps(\alpha)=(t_1,\ldots,t_k)$. Define $p_\alpha$ to be the blp of length $k+2$ with:
\begin{itemize}
  \item the first two rows fixed as usual, and for $i=1,\ldots,k$:
  \[
  \begin{cases}
  s_{i+2}=(0,1,2,i+2,i+3),\ \ \ell_{i+2}=2,&\text{ if }t_i=0;\\[2pt]
  s_{i+2}=(0,1,2,t_i+2,i+2,i+3),\ \ \ell_{i+2}=3,&\text{ if }t_i>0.
  \end{cases}
  \]
\end{itemize}
\end{definition}

\begin{lemma}\label{lem:palpha-successor-limit}
For all $\alpha<\varepsilon_0$ and $m\ge 0$:
\begin{enumerate}
  \item $p_{\alpha+1}$ is of successor type and $(p_{\alpha+1}).{\mathrm{del}}=p_\alpha$;
  \item if $\alpha$ is limit, then $p_\alpha.E(m)=p_{\alpha[2^m]}$.
\end{enumerate}
\end{lemma}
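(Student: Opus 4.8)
I would prove part (1) directly from the definitions, and part (2) by a double induction: an outer induction on the limit ordinal $\alpha$, and, for each such $\alpha$, an inner induction on $m$. For part (1): since $\alpha+1<\varepsilon_0$, Definition~\ref{def:ps} gives $ps(\alpha+1)=ps(\alpha)\frown 0$, so writing $ps(\alpha)=(t_1,\dots,t_k)$ we have $ps(\alpha+1)=(t_1,\dots,t_k,0)$. By Definition~\ref{def:palpha}, the first $k+2$ rows of $p_{\alpha+1}$ coincide with the rows of $p_\alpha$, and its last row — the one coming from the final entry $0$ at position $k+1$ — is $(0,1,2,n,n+1)$ of step length $2$, where $n=p_{\alpha+1}.n=k+3$. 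This is precisely the successor-type shape of Definition~\ref{def:limit_and_successor_types}(2), and deleting this last row recovers $p_\alpha$, so $(p_{\alpha+1}).\mathrm{del}=p_\alpha$.

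For part (2) I would fix a limit $\alpha<\varepsilon_0$, assume the lemma for all ordinals below $\alpha$, and write $\alpha=\beta+\omega^\gamma$ in Cantor normal form (so $\gamma\ge 1$). First I would record the facts that make the recursion run: since $\alpha$ is a limit, Lemma~\ref{lem:ps-basic}(3) shows that the last entry $t_k$ of $ps(\alpha)$ is positive, so by Definition~\ref{def:palpha} the last row of $p_\alpha$ is $(0,1,2,t_k+2,k+2,k+3)$ of step length $3$; together with $0<t_k<k$ from Lemma~\ref{lem:ps-basic}(2), this says $p_\alpha$ is of limit type with the parameter $a$ of Definition~\ref{def:operation_E} equal to $a=t_k+2$, so $p_\alpha.E(m)$ is defined. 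I would then prove $p_\alpha.E(m)=p_{\alpha[2^m]}$ by induction on $m$. The base case $m=0$ asks that $p_\alpha.\mathrm{del}=p_{\alpha[1]}$, i.e.\ that $ps(\alpha[1])$ is $ps(\alpha)$ with its last entry deleted; this is the $n=1$ case of Lemma~\ref{lem:ps-basic}(4), immediate from Definition~\ref{def:ps} when $\gamma$ is a successor and obtained from the outer induction hypothesis applied to $\gamma$ when $\gamma$ is a limit.

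For the step $m\to m+1$: the inner hypothesis gives $p_\alpha.E(m)=p_{\alpha[2^m]}$, and by Definition~\ref{def:operation_E} we have $p_\alpha.E(m+1)=q_{m+1}.Copied$, where $q_{m+1}$ is $p_{\alpha[2^m]}$ with the length-$3$ row $(a,\,n'+1,\,n'+2)$ of step length $1$ appended and $n'=p_{\alpha[2^m]}.n$. By Remark~\ref{rmk:applying_a_row_of_length_3}, $q_{m+1}$ is copyable, and $q_{m+1}.Copied$ appends to $p_{\alpha[2^m]}$ the $ap$-images of its rows $a,a+1,\dots,n'$ — copies in which entries below $a$ are left fixed and entries $\ge a$ are increased by the constant $n'+1-a$. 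When $\gamma=\delta+1$, one computes (as in the proof of Lemma~\ref{lem:ps-basic}(4)) that $ps(\alpha[2^m])$ is $ps(\beta)$ followed by $2^m$ equal-length blocks $0\frown ps(\delta)^{+(\cdot)}$ with successive shifts, and one checks that rows $a,\dots,n'$ are exactly those $2^m$ blocks and that appending their $ap$-copies produces $2^m$ further such blocks — that is, exactly the block structure of $ps(\alpha[2^{m+1}])$. When $\gamma$ is a limit, Lemma~\ref{lem:ps-basic}(4) gives $ps(\alpha[2^m])=ps(\beta)\frown 0\frown ps(\gamma[2^m])^{+(c)}$ with $c=|ps(\beta)|+1$; here $a-2=t_k>c$, so the copied rows, the shift $n'+1-a$, and the appended block are the images, under the passage $\sigma\mapsto ps(\beta)\frown 0\frown\sigma^{+(c)}$, of the corresponding data for the $E$-step of $p_\gamma$, and the case reduces to the outer hypothesis $p_\gamma.E(m+1)=p_{\gamma[2^{m+1}]}$. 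In either case $q_{m+1}.Copied=p_{\alpha[2^{m+1}]}$, which closes both inductions.

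The hard part will be the index bookkeeping in the inductive step; concretely, I expect the main obstacle to be verifying that (i) the row index $a=t_k+2$ lands exactly at the first block boundary of $ps(\alpha[2^m])$ beyond its fixed prefix, so that the copy picks up precisely the repeated tail blocks and none of the prefix; (ii) the $ap$-shift $n'+1-a$ equals the stride by which those blocks are translated when $2^m$ of them become $2^{m+1}$; and (iii) the step lengths and the limit-versus-successor shapes of the copied rows are preserved, so that $q_{m+1}.Copied$ agrees row by row with the blp assigned to $ps(\alpha[2^{m+1}])$ by Definition~\ref{def:palpha}. I would try to make this painless by first isolating a short lemma on how the block structure of $ps(\alpha[n])$ behaves under the prefix-and-shift passage $\beta\mapsto\beta+\omega^\gamma$, after which the copying computation becomes a mechanical verification against the definitions of $ap$, $.Copied$, and $E$.
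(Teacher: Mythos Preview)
Your approach is correct and essentially matches the paper's. Both proofs handle part~(1) directly from the definitions, and for part~(2) both induct on~$m$ with the same case split on whether the Cantor-normal-form exponent~$\gamma$ is a successor or a limit, using the block description of $ps(\alpha[n])$ from Lemma~\ref{lem:ps-basic}(4). The only organizational difference is that you make the outer induction on~$\alpha$ explicit (so the limit-$\gamma$ case reduces cleanly to the hypothesis for~$p_\gamma$), whereas the paper leaves that induction implicit inside its appeal to Lemma~\ref{lem:ps-basic}(4) ``for~$\gamma$''; your version is arguably cleaner on this point. Your identification of the $E$-parameter as $a=t_k+2$ is the right one, and the three bookkeeping obstacles you flag --- that $a$ hits the first block boundary, that the $ap$-shift $n'+1-a$ equals the block stride, and that step lengths are preserved under copying --- are exactly what needs to be checked and are all routine once the block formula for $ps(\alpha[2^m])$ is written out.
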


\begin{proof}
Recall that $p_\alpha$ is defined from $ps(\alpha)=(t_1,\dots,t_k)$ by taking
$p_\alpha.n=k+2$ and, for $1\le i\le k$,
\[
t_i=0 \ \Rightarrow\ p_\alpha.s_{i+2}=(0,1,2,i+2,i+3),\quad
t_i>0 \ \Rightarrow\ p_\alpha.s_{i+2}=(0,1,2,t_i+2,i+2,i+3),
\]
with step lengths $2$ and $3$ respectively. The first two rows are fixed
$(0,1,2)$ and $(0,1,2,3)$.

\smallskip
\noindent\textbf{(i) } $p_0$ is the zero blp; for every $\alpha$, $p_{\alpha+1}$ is of successor type and $(p_{\alpha+1})\!.del=p_\alpha$.

Since $ps(0)=()$ (Lemma~\ref{lem:ps-basic}(1)), $p_0$ has exactly the two fixed rows, so it is the zero blp.
For $\alpha+1$, Lemma~\ref{lem:ps-basic}(3) gives
\(
ps(\alpha+1)=ps(\alpha)\frown 0
\)
and $ps(\alpha)$ is exactly the prefix obtained by deleting the final entry.
Therefore $p_{\alpha+1}$ is obtained from $p_\alpha$ by appending one new row
corresponding to $t_{k+1}=0$, namely $(0,1,2,(k+2),(k+3))$ with step length $2$.
By Definition~\ref{def:limit_and_successor_types}(2) this makes $p_{\alpha+1}$ of
successor type, and deleting that last row gives back $p_\alpha$, i.e.
$(p_{\alpha+1})\!.del=p_\alpha$.

\smallskip
\noindent\textbf{(ii) } If $\alpha$ is limit, then for all $m\in\mathbb{N}$,
\(
p_\alpha.E(m)=p_{\alpha[2^m]}.
\)

Fix a limit $\alpha$. Write $\alpha=\beta+\omega^\gamma$ with $\gamma>0$ and set
$A:=ps(\beta)$, $a:=|A|+1$. By Lemma~\ref{lem:ps-basic}(2), every entry of $ps(\alpha)$ is
$<\,$its position, so $p_\alpha$ is well-defined. By the same lemma and the
definition of successor/limit types, the last row $s_{n}$ of $p_\alpha$ begins with
$(0,1,2,\dots)$ and:
\begin{itemize}
  \item if $\gamma=\delta+1$ (successor tail), then $ps(\alpha)=A\frown 0\frown ps(\delta)^{+(a)}\frown a$ and $p_\alpha$ is of \emph{limit} type with $s_{n,-3}=a$;
  \item if $\gamma$ is limit, then $ps(\alpha)=A\frown 0\frown ps(\gamma)^{+(a)}$ and $p_\alpha$ is of \emph{limit} type with $s_{n,-3}=a$ as well (here the final block has length $\ge1$ and ends at position $n$, so its third-from-last entry exists and equals $a$).
\end{itemize}
Thus in both subcases the parameter $a$ of Definition~\ref{def:operation_E} equals
$s_{n,-3}$.

We prove by induction on $m$ that $p_\alpha.E(m)=p_{\alpha[2^m]}$.

\emph{Base $m=0$.}
By Definition~\ref{def:operation_E}, $p_\alpha.E(0)=p_\alpha.del$.
By Lemma~\ref{lem:ps-basic}(4), when $\alpha$ is limit one has
\(
ps(\alpha[1])
\)
obtained from $ps(\alpha)$ by replacing its last block with an appropriate initial
segment; in particular $ps(\alpha[1])$ is exactly $ps(\alpha)$ with the final entry
removed in the successor-tail case, and the corresponding ``one-step'' truncation in
the limit-tail case. In both subcases this is precisely the effect of $p\mapsto p.del$
on the sequence of rows. Hence $p_\alpha.E(0)=p_\alpha.del=p_{\alpha[1]}$.

\emph{Step $m\to m+1$.}
Assume $p_\alpha.E(m)=p_{\alpha[2^m]}$. Write $q:=p_{\alpha[2^m]}$ and let
$q.n$ be its length. By Definition~\ref{def:operation_E}, to compute $p_\alpha.E(m+1)$
we start from $q$, append one row $(a,q.n+1,q.n+2)$ (with the same $a=s_{q.n,-3}$ as
above), obtaining $q_{m+1}$, and then apply the \textit{Copied} operation to $q_{m+1}$
with parameter $a$.

We analyze the effect on the tail determined by the Cantor-normal-form tail of
$\alpha[2^m]$ and compare with Lemma~\ref{lem:ps-basic}(4).

\underline{Case 1: } $\gamma=\delta+1$ (successor tail).
By Lemma~\ref{lem:ps-basic}(4) (equation~\eqref{eq:succ-tail} in its proof), for all $n\ge1$
\begin{multline*}
  ps\!\bigl((\beta+\omega^{\delta+1})[n]\bigr)
= \\ A \frown \bigl(0\frown ps(\delta)^{+(a)}\bigr)
      \frown \bigl(0\frown ps(\delta)^{+(a+(1+|ps(\delta)|))}\bigr)
      \frown \cdots \frown \\ 
      \bigl(0\frown ps(\delta)^{+(a+(n-1)(1+|ps(\delta)|))}\bigr).
\end{multline*}
In particular,
\[
ps\!\bigl(\alpha[2^m]\bigr)=A\frown \underbrace{B \frown B \frown\cdots \frown B}_{2^m\ \text{blocks}},
\quad\text{where }B:=\bigl(0\frown ps(\delta)^{+(a)}\bigr),
\]
and hence $ps\!\bigl(\alpha[2^{m+1}]\bigr)$ is obtained by \emph{doubling} the number
of copies of $B$ from $2^m$ to $2^{m+1}$.

Now observe what $E$ does on $q=p_{\alpha[2^m]}$:
appending the row $(a,q.n+1,q.n+2)$ and applying \textit{Copied} with the same $a$
copies \emph{all rows indexed $\ge a$} and shifts their entries $\ge a$ by a fixed
stride (Remark~\ref{rmk:applying_a_row_of_length_3}). In the canonical $p_\bullet$
encoding, the rows $\ge a$ are exactly the current $2^m$ copies of the block $B$.
Therefore, $q\mapsto q_{m+1}\mapsto q_{m+1}.\mathrm{Copied}$ duplicates those $2^m$
blocks into $2^{m+1}$ blocks, without changing the prefix determined by $A$.
Thus the resulting blp has pattern sequence $ps\!\bigl(\alpha[2^{m+1}]\bigr)$, i.e.
$p_\alpha.E(m+1)=p_{\alpha[2^{m+1}]}$.

\underline{Case 2: } $\gamma$ limit.
By Lemma~\ref{lem:ps-basic}(4),
\[
ps\!\bigl(\alpha[n]\bigr)=A\frown 0 \frown ps\!\bigl(\gamma[n]\bigr)^{+(a)},
\]
and $ps(\gamma[n])$ is an initial segment of $ps(\gamma[n+1])$ for all $n$.
Hence going from $n=2^m$ to $n=2^{m+1}$ \emph{replaces} the last block
$ps(\gamma[2^m])^{+(a)}$ by a longer block $ps(\gamma[2^{m+1}])^{+(a)}$.

On the $p_\bullet$ side, $q=p_{\alpha[2^m]}$ has the form
\(
A\frown 0 \frown C^{+(a)}
\)
with $C=ps(\gamma[2^m])$. Appending $(a,q.n+1,q.n+2)$ and applying
\textit{Copied} with parameter $a$ copies exactly the rows in the last block and
extends it \emph{by the next chunk determined by the growth of $ps(\gamma[n])$}.
By Lemma~\ref{lem:ps-basic}(4) for $\gamma$, after one such copy the last block
becomes $ps(\gamma[2^{m+1}])^{+(a)}$. The prefix determined by $A\frown 0$ is
unchanged. Therefore the resulting blp has pattern sequence
$ps\!\bigl(\alpha[2^{m+1}]\bigr)$, i.e.\ $p_\alpha.E(m+1)=p_{\alpha[2^{m+1}]}$.

\smallskip
This completes the induction on $m$ and proves (ii).
\end{proof}

% --- Redefinition of H_{ε0} (aligned with our α[n]) and the proof of Theorem \ref{thm:PA-dominates}.

\begin{definition}[Fundamental sequences up to $\varepsilon_0$]\label{def:epsilon0-n}
We keep the fundamental sequence assignment $\alpha\mapsto \alpha[n]$ for all $0<\alpha<\varepsilon_0$ exactly as in Definition~\ref{def:alpha-n}. 
For $\varepsilon_0$ itself we set the classical fundamental sequence by $\varepsilon_0[0]=1$, and 
\[
\varepsilon_0[n+1]\ :=\omega^{\varepsilon_0[n]}.
\]
\end{definition}

\begin{definition}[Hardy/Weiermann hierarchy below $\varepsilon_0$]\label{def:Hardy}
Define $H_\alpha:\mathbb{N}\to\mathbb{N}$ by  recursion on $\alpha\le \varepsilon_0$:
\begin{align*}
H_0(n) &:= n,\\
H_{\alpha+1}(n) &:= H_\alpha(n+1),\\
H_\lambda(n) &:= H_{\lambda[n]}(n+1)\quad\text{for limit }\lambda\le \varepsilon_0,
\end{align*}
with the fundamental sequences $\lambda[n]$ from Definition~\ref{def:alpha-n} (for $\lambda<\varepsilon_0$) and Definition~\ref{def:epsilon0-n} (for $\lambda=\varepsilon_0$).
\end{definition}

\begin{remark}\label{rmk:Weiermann-cite}
Weiermann~\cite{Weiermann} showed that the function $H_{\varepsilon_0}$ {eventually dominates} every computable function whose totality is provable in Peano Arithmetic. 
\end{remark}

We now compare the $m$–hierarchy used in our bounds with the Hardy hierarchy.

\begin{definition}[The $m$–hierarchy]\label{def:m-alpha-n}
For $\alpha<\varepsilon_0$ and $n\ge 2$ define $m(\alpha,n)$ by recursion on~$\alpha$:
\[
m(0,n):=n^n,\qquad
m(\alpha+1,n):= m(\alpha,-)^{\,n}(n),
\]
(where recall $m^n(x)$ indicates interated composition), and for limit $\alpha$,
\[
m(\alpha,n):= m(\alpha[n],n).
\]
\end{definition}

\begin{lemma}\label{lem:m-basic}
For all $\alpha<\varepsilon_0$ and $n\ge 2$:
\begin{enumerate}
\item\label{it:monotone-n} $m(\alpha,n+1)>m(\alpha,n)\ge n+1$.
\item\label{it:monotone-alpha} If $\beta\le\alpha<\varepsilon_0$ and $ps(\beta)$ is an initial segment of $ps(\alpha)$, then $m(\alpha,n)\ge m(\beta+1,n)$.
\end{enumerate}
\end{lemma}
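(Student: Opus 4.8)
The plan is to prove items (1) and (2) simultaneously by transfinite induction on $\alpha<\varepsilon_0$. Two preliminary observations make the bookkeeping manageable. First, the family $\{ps(\alpha):\alpha<\varepsilon_0\}$ is closed under prefixes, and deleting the last entry of $ps(\alpha)$ (for $\alpha>0$) yields $ps(\alpha^-)$ for a unique $\alpha^-<\alpha$: this is Lemma~\ref{lem:ps-basic}(3) when $\alpha$ is a successor (with $\alpha^-=\alpha-1$), and when $\alpha$ is a limit it follows from the shape of $ps(\alpha)$ recorded in the proof of Lemma~\ref{lem:ps-basic}(4), with $\alpha^-=\alpha[1]$ in the notation of Definition~\ref{def:alpha-n}. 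Iterating, every proper initial segment of $ps(\alpha)$ equals $ps(\beta)$ for some $\beta<\alpha$; note also that when $ps(\beta)=ps(\alpha)$ one has $\beta=\alpha$, so throughout (2) we take the initial segment to be \emph{proper}, equivalently $\beta<\alpha$ — this is the form in which (2) is applied. Second, item (1) is the assertion that for each fixed $\alpha$ the map $x\mapsto m(\alpha,x)$ ($x\ge 2$) is strictly increasing and satisfies $m(\alpha,x)\ge x+1$; consequently $m(\alpha,-)^{k}(x)\ge m(\alpha,x)$ for all $k\ge 1$, since each further application of $m(\alpha,-)$ strictly increases its argument.

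For (1), the base case $\alpha=0$ is the elementary fact that $n^n\ge n+1$ and $(n+1)^{n+1}>n^n$ for $n\ge 2$. For the successor case, put $g:=m(\alpha,-)$; by induction $g$ is strictly increasing with $g(x)>x$, so $m(\alpha+1,n)=g^{n}(n)\ge 2n\ge n+1$, and
\[
m(\alpha+1,n+1)=g^{n+1}(n+1)=g\!\bigl(g^{n}(n+1)\bigr)>g^{n}(n+1)>g^{n}(n)=m(\alpha+1,n),
\]
which is the required monotonicity. For the limit case, $m(\alpha,n)=m(\alpha[n],n)\ge n+1$ by induction on $\alpha[n]<\alpha$. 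For monotonicity, apply (2) to the pair $\alpha[n]<\alpha[n+1]$ — legitimate because $ps(\alpha[n])$ is a proper initial segment of $ps(\alpha[n+1])$ by Lemma~\ref{lem:ps-basic}(4), and $\alpha[n+1]<\alpha$ — obtaining $m(\alpha[n+1],n+1)\ge m(\alpha[n]+1,n+1)=m(\alpha[n],-)^{\,n+1}(n+1)\ge m(\alpha[n],n+1)>m(\alpha[n],n)=m(\alpha,n)$, where the last strict inequality is the already-available monotonicity of $m(\alpha[n],-)$ from (1).

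For (2) I would split on the form of $\alpha$. If $\alpha=\alpha'+1$, then $ps(\alpha)=ps(\alpha')\frown 0$: if $ps(\beta)=ps(\alpha')$ then $\beta=\alpha'$ and the claim holds with equality, while if $ps(\beta)$ is a proper initial segment of $ps(\alpha')$ then the inductive hypothesis gives $m(\alpha',n)\ge m(\beta+1,n)$, whence $m(\alpha,n)=m(\alpha',-)^{\,n}(n)\ge m(\alpha',n)\ge m(\beta+1,n)$ using (1) for $\alpha'$. If $\alpha$ is a limit, then for $n\ge 2$ it suffices to show that $ps(\beta)$ is a proper initial segment of $ps(\alpha[n])$ and that $\beta<\alpha[n]$: granting this, the inductive hypothesis at $\alpha[n]<\alpha$ yields $m(\alpha,n)=m(\alpha[n],n)\ge m(\beta+1,n)$. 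To verify it, note that by the first preliminary observation $ps(\alpha[1])$ is exactly $ps(\alpha)$ with its last entry removed, and that $ps(\alpha[1])$ is a proper initial segment of $ps(\alpha[n])$ for $n\ge 2$ by Lemma~\ref{lem:ps-basic}(4); since $|ps(\beta)|\le |ps(\alpha)|-1=|ps(\alpha[1])|$, the sequence $ps(\beta)$ is an initial segment of $ps(\alpha[1])$, hence a proper initial segment of $ps(\alpha[n])$, and then $\beta<\alpha[n]$ again by the first observation.

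The step I expect to be the main obstacle is the limit case of (2): one must argue cleanly that a proper initial segment of $ps(\alpha)$ remains a proper initial segment of $ps(\alpha[n])$, i.e. correctly pin down the common prefix of $ps(\alpha)$ and $ps(\alpha[n])$, which requires going through the two subcases ($\omega^{\delta+1}$-tail and limit-tail) of Lemma~\ref{lem:ps-basic}(4) and the identity that $ps(\alpha)$ with its last entry deleted equals $ps(\alpha[1])$. The remaining ingredients are routine: the iteration arithmetic of the Steinhaus--Moser-type functions $m(\alpha,-)$, and the observation that deleting the last entry of a $ps$-sequence corresponds to passing to a strictly smaller ordinal.
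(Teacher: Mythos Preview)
Your approach is essentially the same as the paper's: a simultaneous transfinite induction on $\alpha$ handling base, successor, and limit cases, invoking (2) at $\alpha[n+1]$ to get monotonicity in the limit case of (1), and using that $ps(\beta)$ remains an initial segment of $ps(\alpha[n])$ for the limit case of (2). Your write-up is considerably more explicit than the paper's terse proof---in particular you spell out why $ps(\beta)$ survives as a prefix of $ps(\alpha[n])$ (which the paper simply asserts) and you correctly note that (2) must be read with $\beta<\alpha$, i.e.\ proper initial segments, for the statement to be true.
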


\begin{proof}
Use induction on $\alpha$. For $\alpha=0$, $m(\alpha,n)=n^n>n+1$ is increasing. If $\alpha$ is a limit ordinal, then $ps(\alpha[n])$ is an initial segment of $ps(\alpha[n+1])$, and $ps(\beta)$ is an initial segment of $ps(\alpha[n])$. so $m(\alpha,n+1)=m(\alpha[n+1],n+1)>m(\alpha[n],n+1)>m(\alpha[n],n)=m(\alpha,n)\ge n+1$. $m(\alpha,n)=m(\alpha[n],n)\ge m(\beta+1,n)$. If $\alpha=\alpha'+1$, then $m(\alpha'+1,n+1)=m(\alpha',-)^{n+1}(n+1)>m(\alpha',n)$, and if $\beta=\alpha'$, then (2) comes from (1); else $m(\alpha,n)>m(\alpha',n)\ge m(\beta+1,n)$.
\end{proof}

\begin{proposition}\label{prop:m-lower-bounds-H}
For all $\alpha<\varepsilon_0$ and $n\ge 1$,
\[
m(\alpha,n+1)\ \ge\ H_\alpha(n).
\]
\end{proposition}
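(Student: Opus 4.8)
The plan is to prove $m(\alpha,n+1)\ge H_\alpha(n)$ by transfinite induction on $\alpha<\varepsilon_0$, treating the zero, successor, and limit cases in turn. The two tools I would lean on are Lemma~\ref{lem:m-basic} — strict monotonicity $m(\alpha,n+1)>m(\alpha,n)\ge n+1$, together with the comparison $m(\alpha,n)\ge m(\beta+1,n)$ whenever $ps(\beta)$ is an initial segment of $ps(\alpha)$ — and Lemma~\ref{lem:ps-basic}(4), which says that for a limit $\lambda$ the pattern sequences $ps(\lambda[n])$ nest inside $ps(\lambda[n+1])$. The leitmotif of the argument is that, since $n\ge 1$ forces $n+1\ge 2$, the orbit of $n+1$ under $m(\beta,-)$ is strictly increasing (each value $x\ge 2$ maps to $m(\beta,x)\ge x+1$), so just two iterations already dominate $m(\beta,n+2)$; this is exactly what will absorb the off-by-one between the $m$-recursion (which iterates $m(\beta,-)$ starting from $n+1$) and the Hardy recursion (which only shifts $n$ to $n+1$).

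The base case $\alpha=0$ is trivial: $m(0,n+1)=(n+1)^{n+1}\ge n=H_0(n)$. For a successor $\alpha=\beta+1$, I would unwind $m(\beta+1,n+1)=m(\beta,-)^{\,n+1}(n+1)$ and $H_{\beta+1}(n)=H_\beta(n+1)$, then apply Lemma~\ref{lem:m-basic}(1) to get
\[
m(\beta,-)^{\,n+1}(n+1)\ \ge\ m(\beta,-)^{2}(n+1)\ =\ m\bigl(\beta,\,m(\beta,n+1)\bigr)\ \ge\ m(\beta,n+2),
\]
where the first inequality holds because the orbit is increasing and $n+1\ge 2$, and the last because $m(\beta,n+1)\ge n+2$ and $m(\beta,-)$ is increasing. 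The induction hypothesis at $\beta$ (with numerical argument $n+1$) then gives $m(\beta,n+2)\ge H_\beta(n+1)=H_{\beta+1}(n)$.

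For a limit $\lambda$ the subtlety is a double mismatch: $m(\lambda,n+1)=m(\lambda[n+1],n+1)$ whereas $H_\lambda(n)=H_{\lambda[n]}(n+1)$, so both the fundamental-sequence index and the numerical argument differ. I would fix the index first: by Lemma~\ref{lem:ps-basic}(4) the sequence $ps(\lambda[n])$ is an initial segment of $ps(\lambda[n+1])$ (hence $\lambda[n]\le\lambda[n+1]$), so Lemma~\ref{lem:m-basic}(2) yields $m(\lambda[n+1],n+1)\ge m(\lambda[n]+1,n+1)$. From here I would replay the successor computation with $\beta:=\lambda[n]$, obtaining
\[
m(\lambda[n]+1,n+1)\ \ge\ m(\lambda[n],n+2)\ \ge\ H_{\lambda[n]}(n+1)\ =\ H_\lambda(n),
\]
the middle inequality being the induction hypothesis at $\lambda[n]<\lambda$.

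The main obstacle I anticipate is purely the arithmetic bookkeeping of the argument shifts in the successor and limit steps — in particular checking that the hypothesis $n\ge 1$ is exactly what is needed to keep every argument occurring in the orbit of $m(\beta,-)$ at least $2$ (so that Lemma~\ref{lem:m-basic} is applicable there) and to guarantee the two iterations that bridge the argument $n+1$ to the argument $n+2$. Beyond the two cited lemmas, no further ingredient seems to be required.
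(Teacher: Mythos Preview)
Your proof is correct and shares the paper's transfinite-induction skeleton; the base and successor cases essentially coincide (your successor computation is in fact cleaner than the paper's displayed chain, which carries a stray $H_\alpha^n(n)$). The limit case is where you genuinely diverge. The paper applies the induction hypothesis directly at $\lambda[n+1]$ to get $m(\lambda[n+1],n+1)\ge H_{\lambda[n+1]}(n)$ and then asserts that this equals $H_\lambda(n)$ ``by Definition~\ref{def:Hardy}'' --- but that definition gives $H_\lambda(n)=H_{\lambda[n]}(n+1)$, so the step really needs an unsupplied monotonicity statement $H_{\lambda[n+1]}(n)\ge H_{\lambda[n]}(n+1)$ on the $H$-side. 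You instead stay on the $m$-side: using Lemma~\ref{lem:ps-basic}(4) and Lemma~\ref{lem:m-basic}(2) to pass from $m(\lambda[n+1],n+1)$ down to $m(\lambda[n]+1,n+1)$, then re-running the successor argument to reach $m(\lambda[n],n+2)\ge H_{\lambda[n]}(n+1)=H_\lambda(n)$. Your route is longer but every step is justified by lemmas already proved; the paper's shortcut is terser but leaves a gap. One minor remark: your parenthetical ``hence $\lambda[n]\le\lambda[n+1]$'' is more cleanly obtained directly from Definition~\ref{def:alpha-n} (fundamental sequences are strictly increasing) than as a consequence of the initial-segment property of $ps$, which would itself require an argument.
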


\begin{proof}
By induction on $\alpha$. For $\alpha=0$,
$m(0,n+1)=(n+1)^{n+1}\ge n+1=H_0(n)$. 

Successor step: assume $m(\alpha,n+1)\ge H_\alpha(n)$ for all $n$. Then
\begin{multline*}
  m(\alpha+1,n+1)= m(\alpha,-)^{\,n+1}(n+1)
\  \ge\ \\  m(\alpha,\,m(\alpha,-)^{\,n}(n)+1)
\ \ge\ H_\alpha\!\bigl( H_\alpha^{\,n}(n)\bigr)
= H_{\alpha+1}(n),
\end{multline*}
using the induction hypothesis, monotonicity in the second argument (Lemma~\ref{lem:m-basic}(\ref{it:monotone-n})), and the defining equation for $H_{\alpha+1}$.

Limit step: let $\lambda$ be limit. By Definition~\ref{def:m-alpha-n}, 
\[
m(\lambda,n+1)=m(\lambda[n+1],n+1).
\]
By the outer IH at $\lambda[n+1]<\lambda$,
\[
m(\lambda[n+1],n+1)\ \ge\ H_{\lambda[n+1]}(n)
= H_\lambda(n),
\]
the last equality by Definition~\ref{def:Hardy}.
\end{proof}

\begin{lemma}\label{lem:e-tower}
Let $(\varepsilon_0[n])$ be the fundamental sequence for~$\epsilon_0$ given in Definition~\ref{def:epsilon0-n}. For each $n$, let $q_n$ be the blp from Definition~\ref{def:q_n}. For every $n\ge 1$,
\[
2^{\,f(q_n,n)}\ =\ m\!\bigl(\varepsilon_0[2^n+1],\,2^n\bigr).
\]
\end{lemma}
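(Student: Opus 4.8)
The plan is to unwind the blp $q_n$ into canonical form and then compute $f$ on it using the zero/successor/limit clauses of Definition~\ref{def:the_f_function}, matching each clause against the recursion for the $m$--hierarchy (Definition~\ref{def:m-alpha-n}). The first observation is that $q_n$ is essentially a canonical blp: rows $1$--$3$ agree with $p_{ep}$, and then there are $2^n+1$ rows of the shape $(0,1,2,i-1,i,i+1)$ with step length $3$, which is precisely the limit-type shape. I would first identify which canonical $p_\alpha$ (from Definition~\ref{def:palpha}) the blp $q_n$ equals, up to the irrelevant first three rows. The key point is that a terminal block of $k$ consecutive step-$3$ rows $(0,1,2,i-1,i,i+1)$ encodes, via $ps$, a long descending/ascending chunk that corresponds to $\omega^{\varepsilon_0[n]}$-style ordinals; concretely, I expect $q_n$ to be (after the $p_{ep}$ prefix is accounted for) the canonical blp $p_{\alpha}$ for $\alpha = \varepsilon_0[2^n+1]$, since $\varepsilon_0[2^n+1] = \omega^{\varepsilon_0[2^n]}$ and a block of $2^n+1$ uniform step-$3$ rows is exactly what Lemma~\ref{lem:palpha-successor-limit}(ii) produces from iterating $E$.

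Next I would run the $f$-recursion. Using Lemma~\ref{lem:palpha-successor-limit}: on a successor-type $p_{\alpha+1}$ we get $f(p_{\alpha+1},m) = (\lambda x.\,f(p_\alpha,x))^{2^m}(m)$, and on a limit-type $p_\alpha$ we get $f(p_\alpha,m)=f(p_\alpha.E(m),m)=f(p_{\alpha[2^m]},m)$. Comparing with Definition~\ref{def:m-alpha-n}, where $m(\alpha+1,n)=m(\alpha,-)^n(n)$ and $m(\alpha,n)=m(\alpha[n],n)$ for limit $\alpha$, we see the same recursion but with the arguments reparametrized by $2^{(-)}$. This is exactly the content of Lemma~\ref{lem:p_alpha_and_Steinhaus_Moser} in the range $\alpha<\omega+\omega$, namely $2^{f(p_\alpha,n)}=m_\alpha(2^n)$; the natural generalization I would prove (by induction on $\alpha<\varepsilon_0$, simultaneously for all $n$) is
\[
2^{\,f(p_\alpha,n)} \;=\; m(\alpha,\,2^n)\qquad\text{for all }\alpha<\varepsilon_0,\ n\ge 1,
\]
where the base case $\alpha=0$ is the zero clause $f(p_0,n)=n\cdot 2^n$ giving $2^{f(p_0,n)} = 2^{n2^n} = (2^n)^{2^n} = m(0,2^n)$, the successor step uses the successor clauses on both sides (the $2^m$-fold iteration matching $m(\alpha,-)^{\,2^n}$ after substituting $2^n$ for $n$), and the limit step uses $f(p_\alpha,n)=f(p_{\alpha[2^n]},n)$ against $m(\alpha,2^n)=m(\alpha[2^n],2^n)$. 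One wrinkle is the transient-type clause $f(p,m)=f(p.M,m)$ and the non-copyable clause $f(p,m)=f(p.del,m)$: I need to check that the canonical $p_\alpha$ never falls into these cases in a way that disturbs the count, or alternatively absorb any such steps — this is already handled in the $\omega+\omega$ range and should extend verbatim.

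Finally, I would specialize: taking $\alpha = \varepsilon_0[2^n+1]$ and evaluating at linedness parameter $n$ (so the exponent is $2^n$), the displayed identity gives $2^{f(p_\alpha,n)} = m(\varepsilon_0[2^n+1],\,2^n)$, and since $q_n$ realizes this $p_\alpha$ up to the $p_{ep}$ prefix (which does not affect $f$, as the extra rows are absorbed by the non-copyable/transient/$M$ clauses exactly as in the construction of $p_{init}$ and $p_{BO}$), we conclude $2^{f(q_n,n)} = m(\varepsilon_0[2^n+1],\,2^n)$ as required. The main obstacle I anticipate is the bookkeeping in the first step — rigorously verifying that $q_n$, with its $p_{ep}$-prefix, produces the \emph{same} value of $f$ as the clean canonical $p_{\varepsilon_0[2^n+1]}$, i.e. that the prefix rows contribute nothing new to the linedness count. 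This requires either a small lemma saying $f$ is insensitive to such "bounded-type" prefixes, or a direct trace through the first few recursion steps showing the prefix rows are deleted/absorbed before the limit-type tail is ever touched; I would model this argument on the treatment of $P_{init}$ and the $.del$-heavy definition of $p_{BO}$ earlier in this section.
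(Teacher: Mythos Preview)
Your approach is correct and essentially the same as the paper's: identify $q_n$ with the canonical $p_{\varepsilon_0[2^n+1]}$, prove the general identity $2^{f(p_\alpha,n)}=m(\alpha,2^n)$ by induction on $\alpha<\varepsilon_0$ using Lemma~\ref{lem:palpha-successor-limit} to match the successor/limit clauses of $f$ against those of the $m$--hierarchy, and specialize. The paper's proof is just a terse pointer to exactly this machinery.

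One correction that would streamline your write-up: your worry about the ``$p_{ep}$ prefix'' is misplaced. Definition~\ref{def:q_n} only takes rows $1$--$3$ of $p_{ep}$, and those are $(0,1,2)$, $(0,1,2,3)$, $(0,1,2,3,4)$ with step length $2$ --- precisely the first three rows of \emph{every} canonical $p_\alpha$ with $\alpha\ge 1$ (since $t_1=0$ always, by Lemma~\ref{lem:ps-basic}(2)). Hence $q_n$ is \emph{literally} $p_{\varepsilon_0[2^n+1]}$, on the nose; no prefix-absorption lemma is needed. Relatedly, the transient/non-copyable clauses never fire for canonical $p_\alpha$: by Definition~\ref{def:palpha} the last row is either $(0,1,2,n,n+1)$ (successor type) or $(0,1,2,t_k{+}2,n,n+1)$ with $\ell=3$ (limit type), so the recursion for $f$ on $p_\alpha$ uses only the zero, successor, and limit clauses, matching the $m$--recursion step for step.
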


\begin{proof}
The canonical construction of $q_n$, together with the successor case of~\ref{thm:f_is_lower_bound}, and theorem~\ref{lem:palpha-successor-limit}
yields 
\[
2^{\,f(q_n,n)}=m\bigl(\alpha,\,2^n\bigr)\quad\text{with }\alpha=\varepsilon_0[2^n+1],
\]
see Lemma~\ref{lem:p_alpha_and_Steinhaus_Moser}.
\end{proof}

\begin{theorem}\label{thm:PA-dominates}
$2^{\,f(q_n,n)}$ eventually dominates every computable function whose totality is provable in Peano Arithmetic.
\end{theorem}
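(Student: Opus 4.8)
The plan is to reduce the statement to the single numerical inequality $2^{\,f(q_n,n)}\ge H_{\varepsilon_0}(n)$ holding for all sufficiently large $n$, and then to invoke Weiermann's theorem (Remark~\ref{rmk:Weiermann-cite}). Indeed, once that inequality is established, for any computable $g$ whose totality is provable in Peano Arithmetic, Weiermann gives some $N$ with $H_{\varepsilon_0}(n)>g(n)$ for all $n\ge N$, and hence $2^{\,f(q_n,n)}\ge H_{\varepsilon_0}(n)>g(n)$ for all large~$n$; since $g$ was arbitrary among such functions, $2^{\,f(q_n,n)}$ eventually dominates every computable function whose totality is provable in PA.

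The inequality $2^{\,f(q_n,n)}\ge H_{\varepsilon_0}(n)$ will be obtained, for $n\ge 2$, from the chain
\[
2^{\,f(q_n,n)} \;=\; m(\varepsilon_0[2^n+1],\,2^n)\;\ge\; m(\varepsilon_0[n]+1,\,2^n)\;\ge\; m(\varepsilon_0[n],\,2^n)\;\ge\; m(\varepsilon_0[n],\,n+2)\;\ge\; H_{\varepsilon_0[n]}(n+1)\;=\;H_{\varepsilon_0}(n),
\]
where the first equality is Lemma~\ref{lem:e-tower}; the first inequality is the ordinal–monotonicity clause Lemma~\ref{lem:m-basic}(\ref{it:monotone-alpha}), applied with $\beta=\varepsilon_0[n]\le\alpha=\varepsilon_0[2^n+1]$ (both below $\varepsilon_0$); the second inequality comes from the successor clause of Definition~\ref{def:m-alpha-n} together with the fact (Lemma~\ref{lem:m-basic}(\ref{it:monotone-n})) that $m(\varepsilon_0[n],\cdot)$ is strictly increasing and strictly above the identity, so iterating it $2^n\ge1$ times beats one application; the third is argument–monotonicity (Lemma~\ref{lem:m-basic}(\ref{it:monotone-n})), using $2^n\ge n+2$; the fourth is Proposition~\ref{prop:m-lower-bounds-H} with its numerical parameter set to $n+1$; and the last equality is the limit clause of Definition~\ref{def:Hardy} applied to $\varepsilon_0$ with the fundamental sequence of Definition~\ref{def:epsilon0-n}.

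The one clause needing a small separate argument is the hypothesis of Lemma~\ref{lem:m-basic}(\ref{it:monotone-alpha}): that $ps(\varepsilon_0[n])$ is an initial segment of $ps(\varepsilon_0[2^n+1])$. I would prove by induction on $n$ that $ps(\varepsilon_0[n])=(0,1,2,\dots,n)$. The base cases use $\varepsilon_0[0]=1$ with $ps(1)=(0)$, and $\varepsilon_0[1]=\omega$ with $ps(\omega)=(0,1)$ (computed from Definition~\ref{def:ps} via the successor-tail case $\omega=\omega^{0+1}$). For the step, $\varepsilon_0[n+1]=\omega^{\varepsilon_0[n]}$ with $\varepsilon_0[n]$ a limit ordinal for $n\ge1$, so the limit-tail clause of Definition~\ref{def:ps} gives $ps(\varepsilon_0[n+1])=(0)\,\frown\,ps(\varepsilon_0[n])^{+(1)}=(0)\,\frown\,(1,\dots,n+1)=(0,1,\dots,n+1)$. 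Since $n\le 2^n+1$ and the fundamental sequence $\varepsilon_0[\cdot]$ is strictly increasing, $(0,1,\dots,n)$ is an initial segment of $(0,1,\dots,2^n+1)=ps(\varepsilon_0[2^n+1])$ and $\varepsilon_0[n]\le\varepsilon_0[2^n+1]<\varepsilon_0$, so Lemma~\ref{lem:m-basic}(\ref{it:monotone-alpha}) applies.

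I expect the main obstacle to be bookkeeping rather than anything conceptual: one must check that the index shift $n\mapsto 2^n$ (introduced by the doubling both in the construction of $q_n$ and in the fundamental-sequence index $\varepsilon_0[2^n+1]$) is harmless, which it is precisely because $2^n$ dominates the linear shifts $n,n+1,n+2$ produced by the Hardy recursion and can absorb them via monotonicity in the numerical argument; and one must be careful that $\varepsilon_0[n]$ is a limit for $n\ge1$ so that the limit-tail clause of Definition~\ref{def:ps} is the one invoked in the induction. Notably, no facts about the Hardy hierarchy beyond Definition~\ref{def:Hardy} and Proposition~\ref{prop:m-lower-bounds-H} are required, since $H_\alpha$ is never compared across different $\alpha$: all ordinal–monotonicity is carried out inside the $m$-hierarchy, where Lemma~\ref{lem:m-basic} supplies exactly what is needed.
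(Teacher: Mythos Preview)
Your argument is correct and in fact tighter than the paper's. Both proofs begin with Lemma~\ref{lem:e-tower} to obtain $2^{f(q_n,n)}=m(\varepsilon_0[2^n{+}1],2^n)$, but then diverge. The paper immediately applies Proposition~\ref{prop:m-lower-bounds-H} to pass to $H_{\varepsilon_0[2^n+1]}(2^n-1)$ and then invokes nondecreasingness of $\alpha\mapsto H_\alpha$ to reach $H_{\varepsilon_0}(2^n-1)$; but that monotonicity points the wrong way (from $H_{\varepsilon_0[2^n+1]}\le H_{\varepsilon_0}$ one cannot deduce the desired lower bound), so as written the paper's chain does not close. Your route instead stays inside the $m$-hierarchy: you use Lemma~\ref{lem:m-basic}(\ref{it:monotone-alpha}) to lower the ordinal index from $\varepsilon_0[2^n{+}1]$ to $\varepsilon_0[n]$ (via the computation $ps(\varepsilon_0[k])=(0,1,\dots,k)$, which is correct once one reads Definition~\ref{def:ps} as in the proof of Lemma~\ref{lem:ps-basic}), then Lemma~\ref{lem:m-basic}(\ref{it:monotone-n}) to lower the numerical argument to $n+2$, and only then apply Proposition~\ref{prop:m-lower-bounds-H} and the defining clause $H_{\varepsilon_0}(n)=H_{\varepsilon_0[n]}(n+1)$. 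This avoids any appeal to ordinal-monotonicity of $H$ whatsoever, exactly as you note in your final paragraph, and it repairs the step that the paper leaves open.
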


\begin{proof}
Fix $n\ge 1$. By Lemma~\ref{lem:e-tower}, Lemma~\ref{lem:m-basic} and Proposition~\ref{prop:m-lower-bounds-H},
\[
2^{\,f(q_n,n)} 
= m\!\bigl(\varepsilon_0[2^n+1],\,2^n\bigr)
\ \ge\ H_{\,\varepsilon_0[2^n-1]}(2^n-1) = H_{\varepsilon_0}(2^n-1).
\]

Therefore,
\[
2^{\,f(q_n,n)}\ \ge\ H_{\varepsilon_0}(2^n-1)\quad\text{for all }n\ge 1.
\]
As mentioned, $H_{\varepsilon_0}$ eventually dominates every PA-provably total computable function (\cite{Weiermann}). Hence so does $n\mapsto 2^{\,f(q_n,n)}$ (being pointwise $\ge H_{\varepsilon_0}(2^n-1)$). This proves the theorem.
\end{proof}

Theorem~\ref{thm:peano_arithmetic} follows from Theorem~\ref{thm:PA-dominates} and Corollary~\ref{cor:Qn-lower}.

%\hl{The following figures need to be referenced to somewhere, and you should place each one next to where it is referenced.}

% % --- Figure: p2e.E (for n=2)
% \drawSequence{0.1}{0.5}{0.5}{%
% 0,1,2,-1,%
% 0,1,2,3,-1,%
% 0,1,2,3,4,-2,%
% 0,1,2,3,4,5,-3,%
% 3,4,5,6,-1,%
% 3,4,6,7,-1,%
% 3,4,7,8,-1,%
% 3,4,8,9,-1
% }[The blp $p_{ep}.E(2)$.][fig:p2ep]

% % --- Figure: p2e.E then M
% \drawSequence{0.1}{0.5}{0.5}{%
% 0,1,2,-1,%
% 0,1,2,3,-1,%
% 0,1,2,3,4,-2,%
% 0,1,2,3,4,5,-3,%
% 3,4,5,6,-2,%
% 3,4,6,7,-1,%
% 3,4,7,8,-1,%
% 0,1,2,4,8,9,-3,%
% 4,8,9,10,-1,%
% 4,8,10,11,-1,%
% 4,8,11,12,-1
% }[blp after applying $.\!M$ to $p_{ep}.E(2)$.][fig:p2e_m]

% % --- Figure: replacement step (k(j_4) etc.)
% \drawSequence{0.1}{0.5}{0.5}{%
% 0,1,2,-1,%
% 0,1,2,3,-1,%
% 0,1,2,3,4,-2,%
% 0,1,2,3,4,5,-3,%
% 0,1,2,4,5,6,-3,%
% 4,5,6,7,-1,%
% 4,5,7,8,-1,%
% 4,5,8,9,-1
% }[blp after replacing embeddings.][fig:bls_replace]

% % --- Figure: q_2 (canonical)
% \drawSequence{0.1}{0.5}{0.5}{%
% 0,1,2,-1,%
% 0,1,2,3,-1,%
% 0,1,2,3,4,-2,%
% 0,1,2,3,4,5,-3,%
% 0,1,2,4,5,6,-3,%
% 0,1,2,5,6,7,-3,%
% 0,1,2,6,7,8,-3,%
% 0,1,2,7,8,9,-3
% }[The blp for $q_2$.][fig:q2]

\bigskip
%%%%%%%%%%%%%%%%%%%%%%%%%%%%%%%%%%%%%%%%%%
\section{Laver Table Yarns}
\label{sec:laver_table_yarns}

We collect here a notion that interweaves (``yarns'') the $\crit^*$-sequences coming from increasing chains of embeddings.

\begin{definition}[Agreement modulo $\alpha$]\label{def:equiv-alpha}
For embeddings $j',j'':V_\lambda\to V_\lambda$ and an ordinal $\alpha<\lambda$, write
\[
j'\ \equiv_\alpha\ j''\quad\text{iff}\quad
\forall x,y\in V_\alpha\ \bigl( x\in j'(y)\ \Longleftrightarrow\ x\in j''(y)\bigr).
\]
\end{definition}

Recall $j_{(1)}=j$, $j_{(n+1)}=j_{(n)}(j)$. As noted in~\cite{Dougherty}, for $0<n,m<\omega$,
\[
j_{(n)}\ \equiv_{\gamma_m}\ j_{(n+2^m)},\qquad
j_{(2^m)}\ \equiv_{\gamma_m}\ \mathrm{id},\qquad \crit\bigl(j_{(2^m)}\bigr)=\gamma_m,
\]
where $(\gamma_m)_{m\in \omega}$ enumerates $\crit^*(j)$. Also, for any $n,n',m>0$, it's possible to computably find $k$ and $k'$ such that \[j_{(n)}(j_{(n')})\ \equiv_{\gamma_m}\ j_{(k)}, j_{(n)}\circ j_{(n')}\equiv_{\gamma_m}\ j_{(k')};\] These numbers~$k$ and~$k'$ are independent of the choice of~$j$. Indeed, $k=n\star_m n'$ in the Laver table of order~$2^m$.

\begin{definition}[Computable increasing $\lambda$-sequence]\label{def:inc-seq}
A sequence $\vec{j}=(j_1,j_2,\ldots)$ is an \emph{increasing $\lambda$-sequence} if each $j_i:V_\lambda\to V_\lambda$ and $j_i\in \Iter(j_{i+1})$.
It is \emph{computable} if there is a uniform effective scheme producing, for each $i$, an expression using the binary operations \emph{application} and \emph{composition} that evaluates to $j_i$ when the variable is instantiated with $j_{i+1}$.
Set $\crit^*(\vec{j})=\bigcup_i \crit^*(j_i)$ and $\Iter(\vec{j})=\bigcup_i \Iter(j_i)$.
\end{definition}

We write $(j_i)_{(n)}$ simply as $j_{i(n)}$.

\begin{theorem}\label{thm:critstar-computable}
If $\vec{j}$ is computable, then the order type of $\crit^*(\vec{j})$ is computable.
\end{theorem}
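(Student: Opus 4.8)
# Proof Proposal for Theorem~\ref{thm:critstar-computable}

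The plan is to leverage the computable stabilization phenomenon already recorded before the theorem: each $j_{i(n)}$ agrees with $j_{i(n+2^m)}$ modulo $\gamma_m$, and the ``Laver table'' multiplication $\star_m$ on $\{1,\dots,2^m\}$ computably captures how application and composition act on these equivalence classes. The key observation is that $\crit^*(\vec j)=\bigcup_i \crit^*(j_i)$ is an increasing union of initial segments: because $j_i\in\Iter(j_{i+1})$, we have $\Iter(j_i)\subseteq\Iter(j_{i+1})$, hence $\crit^*(j_i)\subseteq\crit^*(j_{i+1})$, and each $\crit^*(j_i)$ is an initial segment of $\crit^*(\vec j)$ (its elements are all the critical points below $\sup\crit^*(j_i)$). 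So the order type of $\crit^*(\vec j)$ is the supremum of the order types of the $\crit^*(j_i)$, and to compute it I need (a) a uniform way to compute, for each fixed $i$, the order type of $\crit^*(j_i)$ as a function of the given expression for $j_i$ in terms of $j_{i+1}$, together with enough information to compare these order types across $i$, and (b) an argument that the resulting increasing sequence of ordinals has a computable supremum.

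First I would make precise the ``word'' calculus. Fix $i$ and the effective expression $w_i$ for $j_i$ in the variable $j_{i+1}$. Substituting recursively, for any $i<i'$ we obtain an effective expression $w_{i,i'}$ for $j_i$ in terms of $j_{i'}$, built from the generator $j_{i'}$ by application and composition. Now observe that $j_{i'}$ itself, viewed abstractly, behaves modulo any $\gamma_m$ like the generator $j$ of a single embedding: the congruence $j'\equiv_{\gamma_m}j''$ is compatible with both application and composition (a routine elementarity check), and modulo $\gamma_m$ the algebra generated by a single embedding collapses onto the finite Laver table $A_m$ of order $2^m$ with its operation $\star_m$, plus composition, which is again computable on $A_m$. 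So from $w_{i,i'}$ and $m$ I can compute the class $[j_i]_m\in A_m$, and in particular compute $\crit(j_i)\bmod \gamma_m$ — that is, decide for each $m$ whether $\crit(j_i)<\gamma_m$, and if so, its exact index. Since $\crit^*(\vec j)$ has order type $\omega$ as soon as it is nonempty (it is a countable increasing sequence of ordinals closed downward among critical points; I should verify it has no largest element only in the degenerate case, but in general $\crit^*(j_i)$ already has order type $\ge\omega$), the genuinely new content is the \emph{order type beyond $\omega$}: I want a canonical ordinal notation.

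The heart of the argument is therefore: the order type of $\crit^*(\vec j)$ is determined by the asymptotic behavior of the function $n\mapsto$ (index of $\crit(j_{i(n)})$ in $\crit^*(j_i)$) as $i,n$ vary, and this is exactly the kind of data that the $F$-function machinery of the earlier sections renders computable. Concretely, for each fixed $i$, running the analysis of Section~\ref{sec:basic_laver_patterns} with $j_i$ in place of $j$ shows that $\crit^*(j_i)$ has a computable order type $\rho_i$ (indeed this is the content of Remark~\ref{rmk:f_is_total}, which already asserts that realizations with critical-point data drawn from any prescribed increasing sequence exist, and Theorem~\ref{thm:f_is_lower_bound} makes the relevant counts effective); and the embedding $\Iter(j_i)\hookrightarrow\Iter(j_{i+1})$ induces a computable order-embedding $\crit^*(j_i)\hookrightarrow\crit^*(j_{i+1})$ onto an initial segment, so $\rho_i\le\rho_{i+1}$ and this comparison is decidable from the word data. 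Then $\operatorname{otp}\crit^*(\vec j)=\sup_i\rho_i$, and since the $\rho_i$ are a computable nondecreasing sequence of computable ordinals (presented in a fixed notation system, e.g. below whatever bound the construction forces — in the cases of interest, below $\varepsilon_0$ or wherever the $p_\alpha$-calculus lives), the supremum is again computable: one reads off, for each candidate notation $\sigma$, whether $\sigma\le\rho_i$ for some $i$, which is a $\Sigma^0_1$ condition, and the set of such $\sigma$ is an initial segment whose order type is computable because the $\rho_i$ are cofinal in it and effectively given.

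The main obstacle I expect is making the uniformity in $i$ fully rigorous: I must check that the transition expressions $w_{i,i'}$ grow in a controlled way and that the order-embedding $\crit^*(j_i)\hookrightarrow\crit^*(j_{i+1})$ is not merely an abstract inclusion but is witnessed by a computable map on notations — this requires tracking how a critical point $\gamma_k^{(i)}$ of $\Iter(j_i)$ sits among the critical points of $\Iter(j_{i+1})$, which I would do by expressing the relevant embeddings of $\Iter(j_i)$ via $w_{i,i+1}$ inside $\Iter(j_{i+1})$ and applying the $\equiv_{\gamma_m}$-stabilization uniformly in $m$. Once that bookkeeping is in place, the assembly into a computable supremum is routine. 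A secondary subtlety is the degenerate possibility that the union stabilizes (all $j_i$ eventually equivalent), in which case $\operatorname{otp}\crit^*(\vec j)=\rho_i$ for large $i$ and is trivially computable; the computable scheme detects this by noticing $\rho_i=\rho_{i+1}=\cdots$ modulo every $\gamma_m$, so it is covered by the same algorithm.
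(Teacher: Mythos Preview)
Your proposal rests on a false structural claim: that each $\crit^*(j_i)$ is an initial segment of $\crit^*(\vec j)$, or equivalently that the inclusion $\crit^*(j_i)\hookrightarrow\crit^*(j_{i+1})$ lands on an initial segment. This is not so. For every fixed~$i$, the set $\crit^*(j_i)$ has order type exactly~$\omega$ and is \emph{cofinal} in~$\lambda$ (it is enumerated by $(\gamma_{i,m})_{m<\omega}$ with $\sup_m\gamma_{i,m}=\lambda$). So the inclusion $\crit^*(j_i)\subseteq\crit^*(j_{i+1})$ is a cofinal embedding of~$\omega$ into~$\omega$, never an initial-segment embedding unless it is onto. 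Your formula $\operatorname{otp}\crit^*(\vec j)=\sup_i\rho_i$ therefore collapses to $\sup_i\omega=\omega$, which is wrong in the cases of interest: for the sequence $\vec j_1$, the cardinals $\kappa_0<\kappa_1<\cdots$ of Proposition~\ref{prop:I2_implies_existence_of_omega_lined} all lie in $\crit^*(\vec j_1)$ strictly below $\gamma_{1,0}=\kappa_\omega$, so the order type is well above~$\omega$.

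The correct picture is the one you brush against in your ``main obstacle'' paragraph and then set aside as bookkeeping: the order type of $\crit^*(\vec j)$ is the \emph{direct limit} of copies of~$\omega$ along the computable transition maps $i'\mapsto u(i,i')$ determined by $\gamma_{i,i'}=\gamma_{i+1,u(i,i')}$. Computing~$u$ is the entire content of the proof, and it is done exactly as you sketch there: express $j_i$ as a term in~$j_{i+1}$, evaluate in Laver tables modulo successive $\gamma_{i+1,m}$, and search for the first~$m$ at which the relevant element ceases to be the identity class. Once~$u$ is computable, the direct limit of the system $(\omega,u(i,\cdot))_i$ is a computable linear order isomorphic to $\crit^*(\vec j)$. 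Your appeals to Section~\ref{sec:basic_laver_patterns}, Remark~\ref{rmk:f_is_total}, and Theorem~\ref{thm:f_is_lower_bound} are not pertinent here; those results bound how many $\gamma_m$'s lie below a fixed $\kappa_n$, not how the sets $\crit^*(j_i)$ interleave.
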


Theorem~\ref{thm:critstar-computable} is implied by the following theorem, by taking the direct limit of a computable directed system of well-orderings. 

\begin{theorem}
    Assume $\vec{j}$ is computable. Let $\gamma_{i,i'}$ be the $\gamma_{i'}$ of $j_i$. Then:

    \begin{enumerate}
      \item $\gamma_{i,i'}<\gamma_{i,i'+1}$. For any $\theta<\lambda, 0<i<\omega$, there is $i'$ such that $\gamma_{i,i'}>\theta$.

      \item  There exist a computable function $u(i,i')$ such that $\gamma_{i,i'}=\gamma_{i+1,u(i,i')}$.
    \end{enumerate}
\end{theorem}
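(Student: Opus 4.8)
The plan is to dispose of item~(1) with standard facts and to put the work into the effectivity claim in item~(2). For item~(1): by definition $\gamma_{i,0}<\gamma_{i,1}<\cdots$ is the increasing enumeration of $\crit^*(j_i)$, so the first inequality is immediate; cofinality holds because the critical sequence of $j_i$ is contained in $\crit^*(j_i)$ (the $t$-th term is $\crit(k_t)$ for $k_0=j_i$, $k_{t+1}=j_i(k_t)$, using $\crit(l(k))=l(\crit k)$ from Lemma~\ref{lem:l_k_application}), and the critical sequence of a nontrivial elementary $V_\lambda\to V_\lambda$ is cofinal in $\lambda$ — the fact quoted in the introduction for $j$, which applies equally to each $j_i$. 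For item~(2), the first step is to observe that $j_i\in\Iter(j_{i+1})$ forces $\Iter(j_i)\subseteq\Iter(j_{i+1})$: a generic element of $\Iter(j_i)$ is $G(j_i)$ for a one-variable term $G$ built from application and composition, and substituting a term $F_i$ with $F_i(j_{i+1})=j_i$ — one exists since $\Iter(j_{i+1})$ is generated by $j_{i+1}$ under these operations, and by computability of $\vec j$ such an $F_i$ is produced effectively from $i$ — exhibits it as a term applied to $j_{i+1}$. Hence $\crit^*(j_i)\subseteq\crit^*(j_{i+1})$, so the function $u$ with $\gamma_{i,i'}=\gamma_{i+1,u(i,i')}$ is well defined; what must be shown is that it is computable.

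The computation runs through the Laver-table calculus for the single embedding $j_{i+1}$, exactly as recalled at the start of this section. Write $b_n=(j_{i+1})_{(n)}$ and $\eta_m=\gamma_{i+1,m}$. The facts I would invoke are: $\equiv_{\eta_m}$ (Definition~\ref{def:equiv-alpha}) is a congruence on $\Iter(j_{i+1})$ for both application and composition, the quotient is the Laver table of order $2^m$ carried by $\{1,\dots,2^m\}$ with \emph{computable} application $n\star_m n'$ and composition operations, $b_n\equiv_{\eta_m}b_{n+2^m}$, $b_{2^m}\equiv_{\eta_m}\mathrm{id}$, and $\crit(b_n)=\eta_{v_2(n)}$ with $v_2$ the $2$-adic valuation — so that $\crit(b_n)<\eta_M$ exactly when $1\le n<2^M$, while the $2^M$ classes $[b_1],\dots,[b_{2^M}]$ modulo $\eta_M$ are distinct. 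By structural induction on one-variable terms $D$ built from application and composition, one then computes from $D$ and $M$ a value $\rho_M(D)\in\{1,\dots,2^M\}$ with $D(j_{i+1})\equiv_{\eta_M}b_{\rho_M(D)}$, and $\rho_M(D)=2^M$ precisely when $D(j_{i+1})\equiv_{\eta_M}\mathrm{id}$. I would also use the short lemma that if $k\equiv_{\eta}k'$ and $\crit(k)<\eta$ then $\crit(k')=\crit(k)$: compare $k(\alpha)\cap V_\eta$ with $k'(\alpha)\cap V_\eta$ for ordinals $\alpha\le\crit(k)<\eta$; for $\alpha<\crit(k)$ this forces $k'(\alpha)=\alpha$, and for $\alpha=\crit(k)$ it forces $k'(\alpha)>\alpha$.

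Now $\gamma_{i,i'}=\crit\!\left((j_i)_{(2^{i'})}\right)$, and $(j_i)_{(2^{i'})}=D_{i,i'}(j_{i+1})$ for a term $D_{i,i'}$ computable from $(i,i')$ — namely the standard left-nested term for $(\cdot)_{(2^{i'})}$ with the variable replaced by $F_i$. To compute $u(i,i')$: search $M=1,2,3,\dots$; for each $M$ compute $n_M:=\rho_M(D_{i,i'})$ in the (computable) Laver table of order $2^M$; halt at the first $M$ with $n_M\ne 2^M$ and output $u(i,i'):=v_2(n_M)$. Correctness: if $\gamma_{i,i'}\ge\eta_M$ then $(j_i)_{(2^{i'})}$ agrees with $\mathrm{id}$ on $V_{\eta_M}$, so $n_M=2^M$ and this stage is correctly skipped; if $\gamma_{i,i'}<\eta_M$ then $\crit\!\left((j_i)_{(2^{i'})}\right)<\eta_M$, hence $n_M\ne 2^M$, and the lemma applied to $k=(j_i)_{(2^{i'})}$ and $k'=b_{n_M}$ yields $\gamma_{i,i'}=\crit(b_{n_M})=\eta_{v_2(n_M)}=\gamma_{i+1,v_2(n_M)}$, so the output is correct. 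Termination: $\gamma_{i,i'}\in\crit^*(j_{i+1})$, an infinite set with no largest element, so some $\eta_M$ exceeds it and the search halts by that stage.

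The only place that needs real care is the machinery of the second paragraph — that $\equiv_{\eta_m}$ is a congruence for application and composition and that the finite quotient is exactly the computable Laver table of order $2^m$, uniformly in $m$. This is Laver's structure theorem together with the computability facts already recorded in this section (including that $k\mapsto k'$ giving $j_{i+1(n)}(j_{i+1(n')})\equiv_{\eta_m}j_{i+1(k)}$ and the analogue for composition are effective), so for us it is a citation rather than new work; the remaining ingredients — the structural induction producing $\rho_M$, the value of $\crit(b_n)$, and the agreement-of-critical-points lemma — are routine.
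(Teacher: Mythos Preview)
Your proof is correct and follows essentially the same approach as the paper: represent $(j_i)_{(2^{i'})}$ by a term in $j_{i+1}$ via the computable witness $F_i$, then perform an unbounded search over $M$ in the Laver tables of order $2^M$ until the class is not the identity, and read off the critical point there. The paper's proof is a terse two-sentence version of exactly this; you have spelled out the supporting facts (the congruence property of $\equiv_{\eta_m}$, the formula $\crit(b_n)=\eta_{v_2(n)}$, and the agreement-of-critical-points lemma) more carefully, and you correctly use $(j_i)_{(2^{i'})}$ where the paper's $(i')$ appears to be a typo.
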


\begin{proof}
  (1) follows from the fact that for any embedding $j\colon V_\lambda\to V_\lambda$, the associated sequence $(\gamma_m)$ is cofinal in~$\lambda$. 

  For~(2), let~$s$ be a term (in the language with binary function symbols for composition and application) such that $s(j_{i+1})=j_i$. By an unbounded search, we find a sufficiently large natural number $u$ such that $s(j_{i+1})_{(i')}$ is not $id$ under $\equiv_{\gamma_u}$. We can compute that its critical point is $\gamma_{u(i,i')}$ of $j_{i+1}$ for some $u(i,i')$, which gives the value we need.
\end{proof}

This theorem gives a well preorder on the pairs $(i,i')$ for $i>0,i'\ge 0$. 
\begin{definition}
  $LTY_{\vec{j}}$ is the well preorder on the pairs $(i,i')$ for $i>0,i'\ge 0$, s.t. $(i_1,i_1')\le (i_2,i_2')$ if and only if $\gamma_{i_1,i_1'}\le \gamma_{i_2,i_2'}$.
\end{definition}
We call this order as $LTY_{\vec{j}}$. We use \textit{Laver Table Yarn} to call them because they weave different Laver tables together.

\begin{remark} \label{lty'}
  As an aside, we mention that an increasing sequence $\vec{j}$ gives rise to a family of computable well-orderings, indexed by pairs $(i,i')$. Given such a pair, consider the set of pairs of integers $\{(x,x'),0<x,x'<\omega\}$. We can use an element of this set to encode the embedding $j_{x(x')}$. Note that for any $j'\in \mathrm{Iter}(j_x)$, there is $x'>0$ such that $j'\overset{\underset{\gamma_{i,i'}}{}}{\equiv}j_{x(x')}$. We can give a computable comparasion between $(x,x'),(y,y')$ in the sense of the note of Lemma~\ref{lem:sup_crit}, in a similar way. First take $u=\max(i,x,y)$, then let $\gamma_{u,v}=\gamma_{i,i'}, j_{u(a)}\overset{\underset{\gamma_{u,v}}{}}{\equiv}j_{x(x')},j_{u(b)}\overset{\underset{\gamma_{u,v}}{}}{\equiv}j_{y(y')}$, assuming $j_{u(0)}=id$. Then compare $a,b$. This order on the above set of pairs is a well preorder by Lemma~\ref{lem:sup_crit}. 
\end{remark}

% \begin{theorem}
%     $LTY'_{\vec{j}}(i,i')$ is a computable well preorder.
% \end{theorem}

% \begin{proof}
%     We already see that it's computable and total. Assume it's not a well preorder, then it has an infinite descending chain, denoted as $j'_1,j'_2,...\in \mathrm{Iter}(\vec{j})$. Then from the construction, there is $j''_1,j''_2,...\in \mathrm{Iter}(\vec{j})$ such that for each $m$, $j'_m\overset{\underset{\gamma_{i,i'}}{}}{\equiv}j''_{m}$ (none of them are equivalent to $id$ because $id$ is the minimum), and the sequence $r_{\gamma_{i,i''}}(j''_m)$ is strictly decreasing, where $r$ is from the note of Lemma~\ref{lem:sup_crit}, which is impossible by the lemma.
% \end{proof}

We turn to providing some explicit examples of increasing $\lambda$-sequences. This requires axioms stronger than I3. 

We recall some definitions from~\cite{dimonte2017i0}:

\begin{definition}
    Assume $j:V_\lambda\rightarrow V_\lambda$, $A\subseteq V_\lambda$. We denote $j^+(A)=\cup_{\theta<\lambda}j(A\cap V_\theta)$.

    We say that $j$ is $\Sigma^1_n$ iff for every $\Sigma^1_n$-formula $\varphi(X)$ and $A\subseteq V_\lambda$, $V_\lambda\vDash \varphi(A)\leftrightarrow \varphi(j^+(A))$.
\end{definition}
Note that the axiom I2 is equivalent to the proposition ``There is a $\Sigma^1_2$ embedding''. The Lemma 5.11 of~\cite{dimonte2017i0} says that if $n$ is odd, then "j is $\Sigma^1_n$" is $\Pi^1_{n+1}$, with $j$ as a parameter. It's also proved earlier that if $n$ is odd, then "j is $\Sigma^1_n$" is equivalent to "j is $\Sigma^1_{n+1}$".

For the following proposition, for any embedding $j$ we define $j^{(1)}=j$ and
  \[
  j^{(n+1)}=j(j^{(n)})
  \]
(note the difference with $j_{(n+1)} = j_{(n)}(j)$ that was used above). Observe that if 
\[
  j:\lambda_0\mapsto \lambda_1\mapsto  \lambda_2 \mapsto \lambda_3 \mapsto \cdots
\]
then 
\[
  j^{(n)} :\lambda_{n-1}\mapsto \lambda_n\mapsto  \lambda_{n+1} \mapsto \lambda_{n+2} \mapsto \cdots.
\]

The construction below is inspired by~\cite{vanname}.

% Assume $j:V_\lambda\rightarrow V_\lambda$ is $\Sigma^1_2$. We recursively choose a sequence $\vec{j}_1=(j_1,j_2,...)$. $j_1=j$, and let $s_n(j_n)=j(j_n)$ for some expression $s$, then take $j_{n+1}$ such that $j_n=s(j_{n+1})$. This is possible, because the sentence "there exists an elementary embedding $j'$ such that $j(j_{n})=s(j')$" is witnessed by $j'=j_n$. The sentence is $\Sigma^1_1$ over $V_\lambda$, by the elementarity of $j$, there exists an elementary embedding $j'$ such that $j_{n}=s(j')$.

\begin{proposition} \label{prop:the_increasing_sequence_j_1}
  Assume axiom I2. There is an increasing sequence $\vec{j}=(j_1,j_2,...)$ of embeddings such  for all~$n$, 
   \[
   j_{n+1}^{(n)}(j_{n+1})=j_n.
   \]
\end{proposition}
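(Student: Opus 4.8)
The plan is to build $\vec j$ by recursion on $n$, at each stage replacing $j_n$ by a ``predecessor in the generative order'' $j_{n+1}$; the strength of I2 — a $\Sigma^1_1$ (equivalently $\Sigma^1_2$) embedding together with its reflection properties — is what keeps the recursion from terminating. Write $\Phi_n(k):=k^{(n)}(k)$. Since $k^{(n)}$ is built from $k$ by repeated application and $\Phi_n(k)$ is one further application, $\Phi_n(k)\in\Iter(k)$ for every embedding $k$; hence any sequence $(j_n)_{n\ge1}$ of elementary embeddings $V_\lambda\to V_\lambda$ satisfying $j_n=\Phi_n(j_{n+1})$ for all $n$ is automatically \emph{increasing} in the sense of Definition~\ref{def:inc-seq}. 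So the whole content of the proposition is the existence of such a sequence.

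The key ingredient would be a one-step \emph{Predecessor Lemma}: if $k$ is a $\Sigma^1_1$ embedding and $n\ge1$, there is a $\Sigma^1_1$ embedding $k'$ with $\Phi_n(k')=k$. Granting it, fix a $\Sigma^1_1$ embedding $j_1$ (by I2) and recursively choose $j_{n+1}$ to be any $\Sigma^1_1$ witness of the Predecessor Lemma for $k=j_n$ with parameter $n$, so that $j_{n+1}^{(n)}(j_{n+1})=j_n$; this is a recursion of length $\omega$ with every successive choice-set nonempty, so it produces the desired $\vec j$. (One might instead try to produce the whole sequence at once, via a single reflection along a $\Sigma^1_2$ embedding together with a direct-limit argument along its critical sequence; the difficulties are much the same.)

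The Predecessor Lemma is proved by reflecting along $k$. Since $k$ is $\Sigma^1_2$ it preserves $\Sigma^1_2$ statements, and for an embedding $X$ it sends $X$ to $k^+(X)=k(X)$; iterating, it ties the truth of a $\Sigma^1_2$ statement at $k$ to its truth at $k^{(m)}$ for all $m\ge1$. Applying this to the $\Sigma^1_2$ statement ``there is an embedding $Y$ with $\Phi_n(Y)=X$'' (whose matrix is $\Pi^1_1$): for $n=1$ this holds at $X=k^{(2)}$ with witness $Y=k$, hence at $X=k$, giving an embedding $k'$ with $k'(k')=k$. The critical-point bookkeeping guides the general case: if $\Phi_n(Y)=X$ and $Y$ has critical sequence $\mu_0<\mu_1<\cdots$, then by Lemma~\ref{lem:l_k_application}, using that $Y^{(n)}$ has critical point $\mu_{n-1}$ and critical sequence $\mu_{n-1},\mu_n,\dots$, the critical sequence of $X$ is $\mu_0,\dots,\mu_{n-2},\mu_n,\mu_{n+1},\dots$ — that of $Y$ with its $(n-1)$th entry removed. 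So a $\Phi_n$-predecessor inserts exactly one critical point; along $\vec j$ the critical sequences agree on ever longer initial segments (just the coherence $\Sigma^1_1$-reflection supplies), and a witness for general $n$ at a reflected level $k^{(m)}$ should be assembled by a finite chain of such one-point insertions, each a further application of reflection.

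The main obstacle has two parts. First, verifying that for $n>1$ the statement ``$\exists Y\ \Phi_n(Y)=X$'' genuinely holds at some $X=k^{(m)}$: the naive witness $Y=k$ works only for $n=1$, so for larger $n$ one must build the witness up through the insertion process above, which itself requires further reflection and a judicious choice of which critical point to insert. Second, reflecting the $\Sigma^1_2$ statement only delivers an \emph{embedding} $k'$, while the recursion needs it to be $\Sigma^1_1$; one cannot simply add ``$Y$ is $\Sigma^1_1$'' to the reflected formula, since that clause is $\Pi^1_2$ (Lemma~5.11 of~\cite{dimonte2017i0}) and would push the whole statement to $\Sigma^1_3$, out of reach of a $\Sigma^1_2$ embedding. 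The intended fix is to argue afterwards that any embedding $k'$ with $\Phi_n(k')=k$ must itself be $\Sigma^1_1$ — the equation forces enough agreement between the relevant restrictions of $k'$ and $k$ to transport $\Sigma^1_1$-correctness from $k$ to $k'$, using the equivalence of $\Sigma^1_1$- and $\Sigma^1_2$-ness at odd levels. Making both of these precise is where the real work sits; the $\omega$-recursion and the $\Iter$-closure observation of the first paragraph are routine.
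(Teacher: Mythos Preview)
Your two self-identified obstacles are genuine, and the paper avoids both with a single change of viewpoint: it never reflects along $j_n$. Instead it reflects, at \emph{every} stage, along the fixed starting embedding $j_1=j$, treating $j_n$ merely as a parameter. Since the sequence built so far is increasing, $j_1\in\Iter(j_n)$, so there is a term $s_n$ with $s_n(j_n)=j_1(j_n)$. The $\Sigma^1_1$ formula $\varphi(X)\equiv\bigl(\exists j'\ X=s_n(j')\bigr)$ holds at $X=j_1^{+}(j_n)=j_1(j_n)$ with the trivial witness $j'=j_n$; $\Sigma^1_2$-elementarity of $j_1$ then gives $\varphi(j_n)$, i.e.\ some $j_{n+1}$ with $j_n=s_n(j_{n+1})$. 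No elementarity hypothesis on $j_{n+1}$ (or on any $j_m$ for $m\ge 2$) is ever invoked, so your second obstacle simply does not arise, and your proposed repair --- transporting $\Sigma^1_1$-ness from $k$ back to $k'$ through the equation $\Phi_n(k')=k$ --- is not needed (and has no evident justification in any case).

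Your first obstacle --- manufacturing a witness for the predecessor equation at some reflected level when $n>1$ --- also dissolves, for a reason your critical-point bookkeeping hints at but does not reach. One proves inductively that $j_n^{(n)}=j_1$ for all $n$: from $j_n=j_{n+1}^{(n)}(j_{n+1})$ and the left-distributive identity $(a(b))^{(k)}=a(b^{(k)})$ one gets
\[
j_n^{(n)}=\bigl(j_{n+1}^{(n)}(j_{n+1})\bigr)^{(n)}=j_{n+1}^{(n)}\bigl(j_{n+1}^{(n)}\bigr)=\bigl(j_{n+1}^{(n)}\bigr)^{(2)}=j_{n+1}^{(n+1)}.
\]
Hence $s_n(x)=x^{(n)}(x)$ is a valid choice of term at every stage, and the witness $j'=j_n$ at the reflected level is automatic --- no chain of one-point insertions is required.
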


\begin{proof}
  Suppose that $j:V_\lambda\rightarrow V_\lambda$ is $\Sigma^1_2$. We recursively define an increasing sequence $\vec{j}=(j_1,j_2,...)$, starting with $j_1 = j$. Let $n \ge 1$, and suppose that $j_n$ has been chosen. Since the sequence chosen so far is increasing, $j\in Iter(j_n)$, so let $s_n$ be a term such that  
   \[
   s_n(j_n)=j_1(j_n).
   \]
  Let $\varphi_n$ be the sentence ``there exists an elementary embedding $j'$ such that $j(j_{n})=s_n(j')$.'' This is true, as witnessed by $j'=j_n$. The sentence is $\Sigma^1_1$ over $V_\lambda$. Since $j$ is $\Sigma^1_2$-elementary, there is an embedding $j_{n+1}$ such that 
   \[
     j_n = s_n(j_{n+1}). 
   \]
   
  \smallskip

   Let us now show, by induction on~$n$, that indeed we can take $s_n(x) = x^{(n)}(x)$, which will give the promised connection between $j_{n+1}$ and $j_n$. By induction on~$n$, we show:
   
   \begin{description}
      \item[($\otimes$)] $j_{n}^{(n)}=j_1.$
   \end{description}
     
   For $n=1$ the statement is immediate. Also note that from $\otimes(n)$, we can choose $s_n = x^{(n)}(n)$ to satisfy $s_n(j_n)=j_1(j_n)$, which in turns gives $j_{n} = j_{n+1}^{(n)}(j_{n+1})$. Let $n\ge 1$ and suppose that $\otimes(n)$ holds; we derive $\otimes(n+1)$ as follows, using the left-distributive property of the application operation of embeddings:
   \begin{multline*}
   j_1=(j_{n+1}^{(n)}(j_{n+1}))^{(n)}=   j_{n+1}^{(n)}((j_{n+1})^{(n)})= \\ j_{n+1}(j_{n+1}^{(n-1)}(j_{n+1}^{(n-1)}))= \cdots =j_{n+1}^{n-1}(j_{n+1}(j_{n+1}))=j_{n+1}^{n}(j_{n+1})=j_{n+1}^{(n+1)}. \qedhere
  \end{multline*}
\end{proof}

We denote this increasing sequence as $\vec{j}_1$, and denote $LTY_{\vec{j}_1}=LTY_1$.

\subsubsection*{Ranks of blp's}

\begin{proposition} \label{prop:I2_implies_existence_of_omega_lined}
  Assume axiom I2. Then there are cardinals 
  \[
    \kappa_0 < \kappa_1 < \kappa_2 < \cdots < \kappa_\omega < \kappa_{\omega+1} < \kappa_{\omega+2} < \cdots
  \]
  and embeddings~$j_n$ satisfying
  \[
    j_n: \kappa_0 \mapsto \kappa_1 \mapsto \cdots \mapsto \kappa_{n-1} \mapsto \kappa_\omega \mapsto \kappa_{\omega+1} \mapsto \kappa_{\omega+2} \mapsto \cdots.
  \]
\end{proposition}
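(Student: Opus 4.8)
The plan is to obtain this proposition by reindexing the increasing sequence already constructed in Proposition~\ref{prop:the_increasing_sequence_j_1}. Write $\vec j_1 = (e_1,e_2,\dots)$ for that sequence; by the proposition $e_{n+1}^{(n)}(e_{n+1})=e_n$, and its proof moreover establishes the identity $(\otimes)$, $e_n^{(n)}=e_1$, for every $n\ge 1$. The embeddings promised here will be $j_n:=e_{n+1}$, and the cardinals $\kappa_0<\kappa_1<\cdots$ together with $\kappa_\omega<\kappa_{\omega+1}<\cdots$ will be read off from the critical sequences of the $e_n$ and of $e_1$.

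First I would use the observation recorded just before Proposition~\ref{prop:the_increasing_sequence_j_1} — that for any embedding $k$ with critical sequence $\theta_0<\theta_1<\cdots$ one has $k^{(m)}:\theta_{m-1}\mapsto\theta_m\mapsto\theta_{m+1}\mapsto\cdots$ — together with $(\otimes)$ to pin down the critical sequence of $e_n$: applying the observation to $k=e_n$, $m=n$, and then using $e_n^{(n)}=e_1$, shows that the critical sequence of $e_n$ consists of $n-1$ ``fresh'' ordinals $\theta^n_0<\cdots<\theta^n_{n-2}$ (all below $\crit(e_1)$) followed verbatim by the critical sequence $\lambda_0<\lambda_1<\cdots$ of $e_1$. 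Next I would establish the coherence $\theta^n_i=\theta^{n+1}_i$ for all $0\le i\le n-2$: by Proposition~\ref{prop:the_increasing_sequence_j_1} we have $e_n=e_{n+1}^{(n)}(e_{n+1})$, and the applied embedding $e_{n+1}^{(n)}$ has critical point $\theta^{n+1}_{n-1}$, hence fixes each $\theta^{n+1}_i$ with $i\le n-2$; Lemma~\ref{lem:l_k_application} (in its evident infinite form) then shows that the first $n-1$ critical points of $e_n$ are exactly $\theta^{n+1}_0,\dots,\theta^{n+1}_{n-2}$. Consequently $\theta^{i+2}_i=\theta^{i+3}_i=\cdots$; I would set $\kappa_i$ equal to this common value and $\kappa_{\omega+m}:=\lambda_m$. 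With $j_n:=e_{n+1}$, the critical sequence of $j_n$ is $\theta^{n+1}_0,\dots,\theta^{n+1}_{n-1},\lambda_0,\lambda_1,\dots$, and since $n+1\ge i+2$ whenever $i\le n-1$, this reads $j_n:\kappa_0\mapsto\cdots\mapsto\kappa_{n-1}\mapsto\kappa_\omega\mapsto\kappa_{\omega+1}\mapsto\cdots$, as required. Strictness of the chain $\kappa_0<\kappa_1<\cdots<\kappa_\omega<\kappa_{\omega+1}<\cdots$ is immediate, each segment being part of an increasing critical sequence.

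The only real content beyond Proposition~\ref{prop:the_increasing_sequence_j_1} is bookkeeping: tracking the off-by-one between the indexing of $\vec j_1$ and the indexing required here (so that $j_n=e_{n+1}$ has exactly $n$ fresh critical points), verifying that the coherence $\theta^n_i=\theta^{n+1}_i$ really holds on the full range $i\le n-2$ we invoke, and checking the small cases $n=1,2$, where $e_{n+1}^{(n)}$ has a relatively low critical point but still one lying strictly above $\crit(e_{n+1})$, so that no fresh ordinal is disturbed. I do not expect any genuinely new obstacle: the substantive work — the $\Sigma^1_2$-reflection producing $\vec j_1$ — has already been done.
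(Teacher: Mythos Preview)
Your proposal is correct and follows essentially the same route as the paper: both start from the increasing sequence $\vec{j}_1=(e_1,e_2,\dots)$ of Proposition~\ref{prop:the_increasing_sequence_j_1}, take the proposition's $j_n$ to be $e_{n+1}$, and read the $\kappa_i$ and $\kappa_{\omega+m}$ off the critical sequences using $e_n=e_{n+1}^{(n)}(e_{n+1})$ together with the observation about $\crit(k^{(m)})$. The only difference is presentational: the paper computes the cases $n=2,3$ explicitly and then says ``continue in this fashion,'' whereas you argue uniformly in $n$ via the identity $(\otimes)$ and an explicit coherence check $\theta^n_i=\theta^{n+1}_i$; your version is in fact the cleaner write-up of the same idea.
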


\begin{proof}
  Let $\vec{j}_1=(j_1,j_2,...)$ be an increasing sequence given by Proposition~\ref{prop:the_increasing_sequence_j_1}. Let $\kappa_\omega$, $\kappa_{\omega+1}$, \dots denote the cardinals of the critical sequence of~$j_1$, i.e., 
\[
  j_1:\kappa_\omega\mapsto\kappa_{\omega+1}\mapsto\kappa_{\omega+2}\mapsto \cdots.
\]
 Temporarily, denote the critical sequence of~$j_2$ as
 \[
  j_2:\lambda_0\mapsto\lambda_{1}\mapsto \lambda_2 \mapsto \cdots.
\] Hence, 
   \[
  j_2(j_2):\lambda_1\mapsto\lambda_{2}\mapsto \lambda_3 \mapsto \cdots.
\]
However, since $j_1 = j_2(j_2)$, so $\lambda_1 = \kappa_\omega$, $\lambda_2 = \kappa_{\omega+1}$, and so on. Setting $\kappa_0 = \lambda_0$, we get
\[
  j_2:\kappa_0 \mapsto \kappa_\omega\mapsto\kappa_{\omega+1}\mapsto\kappa_{\omega+2}\mapsto \cdots.
\] 
Now, temporarily letting \[
  j_3:\theta_0\mapsto\theta_{1}\mapsto \theta_2 \mapsto \cdots.
\] We get
\[
  j_3^{(2)}(j_3):\theta_0 \mapsto\theta_{2}\mapsto \theta_3 \mapsto \theta_4 \mapsto \cdots.
\]
Since $j_2 = j_3^{(2)}(j_3)$, $\theta_0 = \kappa_0$ $\theta_2 = \kappa_\omega$, $\theta_3 = \kappa_{\omega+1}$. Setting $\kappa_1 = \theta_2$, we get
\[
  j_3:\kappa_0 \mapsto \kappa_1 \mapsto \kappa_\omega\mapsto\kappa_{\omega+1}\mapsto\kappa_{\omega+2}\mapsto \cdots.
\]
We continue in this fashion. 
\end{proof}

We give an interesting application of this construction. For the following definition, recall Definition~\ref{def:limit_and_successor_types}, and compare with Definition~\ref{def:the_f_function}.

\begin{definition} \label{def:I2:rank_of_blp}
    We recursively define an ordinal rank $r(p)$ on basic laver patterns.
    \begin{itemize}
      \item If $p.n=2$, then $r(p)=0$.
      \item If $p$ is not copyable, and  $r(p.del)$ is defined, we let $r(p)=r(p.del)$. 
      \item If~$p$ is a transient pattern, and $r(p.M)$ is defined, then $r(p)=r(p.M)$.
      \item If $p$ is of successor type, and $r(p.del)$ is defined, then $r(p)=r(p.del)+1$.
      \item If $p$ is of limit type, and for each $n>0$, $r(p.E(n))$ is defined, then 
      \[
        r(p) = \sup \left\{ r(p.E(n)) \,:\,  n>0 \right\}. 
      \]
    \end{itemize}
\end{definition}

Recall the blp $p_{init}$ described in Figure~\ref{fig:p_init}.

\begin{theorem} \label{thm:I2:p_init_has_rank}
    Assume axiom I2. Then $p_{init}$ has a rank.
\end{theorem}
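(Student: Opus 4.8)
The plan is to re-run the well-founded induction behind Theorem~\ref{thm:f_is_lower_bound}, but over the increasing $\lambda$-sequence $\vec{j}_1$ of Proposition~\ref{prop:the_increasing_sequence_j_1} rather than over a single embedding. The point where the extra strength of I2 is genuinely needed is the limit clause of Definition~\ref{def:I2:rank_of_blp}: whereas the limit clause of Definition~\ref{def:the_f_function} inspects $p.E(m)$ only for the single parameter $m=P.m$ (so that $P.E<P$ from Lemma~\ref{lem:operations_reduce_rank} suffices), the recursion for $r$ demands $r(p.E(n))$ for \emph{every} $n>0$. Over a single embedding one cannot realize a fixed blp with arbitrarily large linedness parameter while keeping the realization bounded, but under I2 this becomes possible: using the cardinals $\kappa_0<\kappa_1<\cdots<\kappa_\omega<\cdots$ and embeddings of Proposition~\ref{prop:I2_implies_existence_of_omega_lined}, there is room between $\kappa_1$ and $\kappa_\omega$ for an $n$-linedness witness for $\kappa_0,\kappa_1,\kappa_\omega$, for every $n$.

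First I would introduce \emph{$\vec{j}_1$-bls's}: exactly like bls's, but with the embeddings drawn from $\Iter(\vec{j}_1)$. The ordering $<$ of Definition~\ref{def:smaller_bls} and all the structural facts of Section~\ref{sec:basic_laver_patterns} (Lemmas~\ref{lem:operations_reduce_rank} and~\ref{lem:facts_about_P.E}, and the relevant remarks) carry over verbatim; well-foundedness of $<$ follows from the $\vec{j}_1$-analogue of Lemma~\ref{lem:sup_crit}, namely that for every $\theta<\lambda$ the relation $R$ restricted to embeddings of $\Iter(\vec{j}_1)$ with critical point $<\theta$ is well founded (the argument indicated in Remark~\ref{lty'}). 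Since every blp begins with the rows $(0,1,2)$ and $(0,1,2,3)$, the operations $.\mathrm{del}$, $.M$, $.E$ leave $\theta_0,\theta_1,\theta_2$ fixed. Call a $\vec{j}_1$-bls $Q$ \emph{good} if $Q.\theta_0=\kappa_0$, $Q.\theta_1=\kappa_1$, $Q.\theta_2=\kappa_\omega$; goodness is then preserved by all three operations. The crucial addition is a \emph{Reset Lemma}: if $Q$ is good and $n>0$, there is a good $\vec{j}_1$-bls $Q^{[n]}$ with $Q^{[n]}.p=Q.p$, $Q^{[n]}.m=n$, agreeing with $Q$ on every cardinal $\theta_i$ and every embedding $j_i$ — one simply replaces the linedness witnesses. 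Concretely, fixing $h\in\Iter(\vec{j}_1)$ with $h:\kappa_0\mapsto\kappa_1\mapsto\cdots\mapsto\kappa_n\mapsto\kappa_\omega\mapsto\cdots$ (Proposition~\ref{prop:I2_implies_existence_of_omega_lined}) and putting $g:=h(h)$, Lemma~\ref{lem:l_k_application} gives $g:\kappa_1\mapsto\kappa_2\mapsto\cdots\mapsto\kappa_n\mapsto\kappa_\omega\mapsto\cdots$, hence $g^{i-1}(h):\kappa_0\mapsto\kappa_i\mapsto\kappa_{i+1}$ for $1\le i\le n-1$ and $g^{n-1}(h):\kappa_0\mapsto\kappa_n\mapsto\kappa_\omega$; these $n$ embeddings, with the intermediate cardinals $\kappa_1<\kappa_2<\cdots<\kappa_n<\kappa_\omega$, witness that $\kappa_0,\kappa_1,\kappa_\omega$ are $n$-lined.

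With this in hand the induction is quick. I would prove, by induction on $<$ restricted to good $\vec{j}_1$-bls's, that $r(Q.p)$ is defined for every good $Q$, going through the five clauses of Definition~\ref{def:I2:rank_of_blp}. The zero case is immediate; for the ``not copyable'' and ``successor'' cases pass to $Q.\mathrm{del}$ (good, and $<Q$ by Lemma~\ref{lem:operations_reduce_rank}(1)); for the transient case pass to $Q.M$ (good, and $<Q$ by Lemma~\ref{lem:operations_reduce_rank}(2)). In the limit case, for each $n>0$ the bls $Q^{[n]}.E$ realizes $Q.p.E(n)$, is good, and satisfies $Q^{[n]}.E<Q$: indeed $Q^{[n]}.E<Q^{[n]}$ by Lemma~\ref{lem:operations_reduce_rank}(3), while $Q^{[n]}$ and $Q$ have the same last cardinal and the same last embedding, so the base relation of Definition~\ref{def:smaller_bls} cannot distinguish them, whence $P<Q^{[n]}$ iff $P<Q$ for every bls $P$. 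By the inductive hypothesis each $r(Q.p.E(n))$ is then defined, so $r(Q.p)=\sup_{n>0}r(Q.p.E(n))$ is an ordinal. Finally, replaying the construction of $P_{init}$ following Lemma~\ref{lem:dougherty_summary}, but with an embedding $h_0\in\Iter(\vec{j}_1)$ having critical sequence $\kappa_0,\kappa_1,\kappa_\omega,\kappa_{\omega+1},\dots$ in place of $j$, produces a good $\vec{j}_1$-bls $P^{\ast}$ with $P^{\ast}.p=p_{init}$; applying the claim to $P^{\ast}$ shows $r(p_{init})$ is defined.

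The main obstacle, and where the real work lies, is the Reset Lemma together with the construction of $P^{\ast}$: both require explicit computation with the chains of Proposition~\ref{prop:I2_implies_existence_of_omega_lined} via Lemmas~\ref{lem:l_k_application} and~\ref{lem:lk_application_general}, checking that the embeddings produced lie in $\Iter(\vec{j}_1)$ and act as claimed on $\kappa_0,\kappa_1,\kappa_\omega$. One must also verify, once and for all, that the structural results of Section~\ref{sec:basic_laver_patterns} survive the passage from $\Iter(j)$ to $\Iter(\vec{j}_1)$ — routine, but it is on the $\vec{j}_1$-analogue of Lemma~\ref{lem:sup_crit} that the well-foundedness of the entire induction rests.
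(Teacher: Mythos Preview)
Your approach is correct and essentially the same as the paper's: both realize $p_{init}$ over $\Iter(\vec{j}_1)$ by substituting $j_3$ for $j$ in the construction following Lemma~\ref{lem:dougherty_summary}, and both handle the limit clause by exploiting that the ordering $<$ of Definition~\ref{def:smaller_bls} is blind to the linedness component. The paper packages this last point a bit more concisely by extending bls's to allow $P.m=\omega$ (a single $\omega$-lined witness, so that each $P.E(k)$ automatically has strictly smaller last cardinal and hence $P.E(k)<P$), whereas your Reset Lemma plus the $<$-indistinguishability of $Q$ and $Q^{[n]}$ reaches the same conclusion with one extra observation.
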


The author believes that I3 or much weaker axiom systems are enough to prove this result.

\medskip

To prove Theorem \ref{thm:I2:p_init_has_rank}, we extend the notion of ``$n$-lined'' (Definition~\ref{def:m-lined}) as follows: we say that three ordinals $\theta<\theta'<\theta''<\lambda$ are $\omega$-lined, if there is $\theta_0<\theta_1<...<\theta_{\omega}$ s.t. $\theta_0=\theta',\theta_\omega=\theta''$, and for any $n>0$, $\theta_0,\theta_1,...,\theta_{n},\theta_{\omega}$ witness that $\theta<\theta'<\theta''<\lambda$ are $n$-lined. 

We extend the notion of a bls~$P$ to allow $P.m=\omega$, rather than requiring it to be finite. 
Note that if~$P$ is a bls of limit type and $P.m=\omega$, then there's no natural definition of $P.E$, however for each $k<\omega$, we can define $P.E(k)$ as above, by only using the first~$k$ embeddings showing $\omega$-linedness. Note that all these operations keep the linedness components, so that $P.E(k).m=\omega$ for all~$k$.

  \begin{lemma} \label{lem:I2:omega_lined_implies_rank}
      If a blp~$p$ has a realisation~$P$ with $P.m=\omega$, then $p$ has a rank.
  \end{lemma}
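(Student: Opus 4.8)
The plan is to prove, by induction on the well-founded order $<$ on bls's from Definition~\ref{def:smaller_bls}, that every \emph{extended} bls $P$ (one allowing $P.m=\omega$) has $r(P.p)$ defined; the lemma is then the special case $P.m=\omega$. First I would check that $<$, and its well-foundedness, extend verbatim to extended bls's: the relation refers only to the last cardinal $P.\theta_{-1}$ and the last embedding $P.j_{-1}$, never to the linedness parameter, and the well-foundedness argument (via Lemma~\ref{lem:sup_crit}) uses only that along a $<$-descending chain the last cardinals are eventually constant, equal to some $\theta^*<\lambda$, and that the last embeddings thereafter form an $R$-descending chain with critical points below $\theta^*$ — the critical point of the last embedding of a bls being $\theta_{s_{n,1}}$, which lies below $\theta_n<\theta_{-1}$. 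Since $.del$, $.M$, and each $.E(n)$ preserve the linedness components, they send extended bls's to extended bls's, so the induction stays inside this class.

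For the inductive step I would follow the case split of Definition~\ref{def:I2:rank_of_blp}. The zero case is immediate. If $p$ is non-zero and not copyable, then $p.n\ge 3$, so $P.del$ is a (still extended) realization of $p.del$ with $P.del<P$ by Lemma~\ref{lem:operations_reduce_rank}(1); induction gives $r(p.del)$, hence $r(p)=r(p.del)$. If $p$ is copyable and transient, $P.M$ realizes $p.M$ with $P.M<P$ by Lemma~\ref{lem:operations_reduce_rank}(2), so $r(p)=r(p.M)$. If $p$ is of successor type, $P.del<P$ again and $r(p)=r(p.del)+1$. In each of these cases the only content is invoking the already-established fact that the relevant operation strictly decreases the $<$-rank.

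The one genuinely $\omega$-specific case is $p$ of limit type, and here is the key observation I would use. Writing $N=p.n$, with $l_1,l_2,\dots$ and $\theta_1=\theta'_0<\theta'_1<\cdots<\theta'_\omega=\theta_2$ the data witnessing that $(P.\theta_0,P.\theta_1,P.\theta_2)$ are $\omega$-lined, and $\mu_i:=(P.j_N)(\theta'_i)$, each truncation $P.E(n)$ (built from $l_1,\dots,l_n$) is an extended realization of $p.E(n)$ with last cardinal $\mu_n$. Since $\theta'_n<\theta'_\omega=\theta_2$ and $P.j_N$ is strictly increasing on ordinals, $\mu_n<(P.j_N)(\theta_2)=P.\theta_{-1}$, so $P.E(n).\theta_{-1}<P.\theta_{-1}$ and hence $P.E(n)<P$. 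The induction then yields that $r(p.E(n))$ is defined for every $n>0$, so $r(p)=\sup\{r(p.E(n)):n>0\}$ is an ordinal. This exhausts the cases, so $r(p)$ is defined.

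I expect the main subtlety to be conceptual rather than computational. In the finite-$m$ setting the limit case is the awkward one, because $P.E=P.E(m)$ has the \emph{same} last cardinal as $P$, forcing a descent through the last embedding (Lemma~\ref{lem:facts_about_P.E}). In the $\omega$ setting we never form the full $P.E$; every finite truncation $P.E(n)$ already sits strictly below $P$ simply because $\mu_n<P.\theta_{-1}$, so the well-founded descent comes for free. The only other thing to pin down carefully is that the ordering and its well-foundedness really do survive the passage to $P.m=\omega$, which they do precisely because that proof never mentioned $P.m$.
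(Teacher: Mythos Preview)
Your proof is correct and follows the same approach as the paper: induction on the well-founded relation $<$ (which does not mention $P.m$), with the case analysis of Definition~\ref{def:I2:rank_of_blp}, noting that the operations preserve $P.m=\omega$. One small framing quibble: your opening sentence announces the induction for \emph{all} extended bls's and calls the lemma ``the special case $P.m=\omega$'', but your limit-type argument (correctly) uses $P.m=\omega$; so the induction hypothesis should really be restricted to $P.m=\omega$ throughout, exactly as the paper does. Your explicit observation that $P.E(n).\theta_{-1}=\mu_n<P.\theta_{-1}$ for every finite $n$ --- so the descent in the limit case comes from the last \emph{cardinal} rather than the last \emph{embedding} --- is a nice clarification the paper leaves implicit.
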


  \begin{proof}
    Recall the well-founded relation $P<Q$ on bls's (Definition~\ref{def:smaller_bls}); it remains well-founded even when extended to bls with $P.m=\omega$, since the relation does not depend on the linedness part of the bls. By induction on this relation, we prove that if $P.m=\omega$ then $P.p$ (the blp that $P$ realises) has a rank. 

    Let~$P$ be such a bls, and suppose that this has been shown for all $Q<P$. Let $p = P.p$. By the first case of Definition~\ref{def:I2:rank_of_blp}, we may assume that $P.n>2$. Since $P.del<P$ (it has a smaller last cardinal), $p.del$ has a rank. Similarly, as used in the proof of Theorem~\ref{thm:f_is_lower_bound}, when defined, $P.M<P$ and $P.E(m)<P$. This covers the other cases of Definition~\ref{def:I2:rank_of_blp}. 

    Note the the assumption that $P.m = \omega$ is only used in the limit case, so that $P.E(m)$ is defined for all~$m$. 
\end{proof}

\begin{proof}[Proof of Theorem~\ref{thm:I2:p_init_has_rank}]
    Fix cardinals $\kappa_0,\kappa_1,\dots, \kappa_\omega,\kappa_{\omega+1},\dots$ given by Proposition~\ref{prop:I2_implies_existence_of_omega_lined}. Maniplulating the associated embeddings $(j_i)$ as usual, we see  that $\kappa_0,\kappa_1,\kappa_2,...,\kappa_\omega$ witness that $\kappa_0,\kappa_1,\kappa_\omega$ are $\omega$-lined. 

    Recall that $p.init$ has a relaisation that was derived from Lemma~\ref{lem:dougherty_summary}. Replace the original embedding~$j$ used in that Lemma and the following development by $j_3$, since 
    \[
      j_3 : \kappa_0 \mapsto \kappa_1 \mapsto \kappa_\omega \mapsto \cdots.
    \]
    The resulting blp has the same first three cardinals, so we can now replace the linedness part, to get a bls $P$ realizing $p_{init}$ and satisfying $P.m = \omega$. By Lemma~\ref{lem:I2:omega_lined_implies_rank}, this finishes the proof.
\end{proof}

The author has calculated that $r(p_{\varepsilon_0})=\varepsilon_0$, and $r(p_{BO})=\psi(\Omega_{\omega})$, the well-known Buchholz ordinal. However the calculations are extremely long and tedious, and will be presented in another paper.

The author wrote a program to compute operations of blps in~\cite{blplink}. Note that the operation $.E(n)$ in this paper is equivalent with inputing 'E', then input the number $2^n$ in the program.

\subsubsection*{A very fast-growing function}

Another interesting application is a computable function, whose totality is only known under I2 now.

For a pair $(a,b)$ that is an element of $LTY_1$, if $b>0$, then there is $c>0$ such that $(a,b)=_{LTY_1}(a+1,c)$. The transformation $(a,b)\mapsto (a+1,c-1)$ makes a pair smaller in the sense of $LTY_1$.

\begin{definition}
  Given $n\in\omega$. We define the integer $F_1(n)$: start from the pair $(1,n)$, then repeatedly transform the pair until it reaches the form $(a,0)$, then let $F_1(n)=a$. 
\end{definition}

Assuming axiom I2, then $LTY_1$ is a well preorder, so the procedure must finally halt, and the function $F_1$ is total. Below is a python program that computes $F_1(n)$.

\begin{lstlisting}
def l(n,a,b):
  if a==2**n:return b
  if b==1:return a+1
  return l(n,l(n,a,b-1),a+1)
 
def F1(n):
  i=1;y=n
  while y>0:
    z=y
    while True:
      w=1
      for _ in range(i-1): w=l(z,1,w)
      if l(z,w,2**y)==2**z: z+=1
      else: break
    i+=1; y=z-2
  return i
 
print(F1(int(input())))

\end{lstlisting}

We expain the program. When $0\le a\le 2^z, 1\le b\le 2^z$, $l(n,a,b)=a\star_n b$ by definition of Laver tables, inside which we are working under $\mod 2^n$, that is, $2^n=0$. We start from $(i,y)=(1,n)$, and in each iteration we transform it to $(i+1,y'-2)$, where $y'$ is the least integer such that $(1\star_{y'}(1\star_{u'}...(1\star_{y'}1)))\star_{y'}2^y\ne 0$, where there are $i$ many $1$. The later number is nonzero only when $\gamma_{y'}>\text{crit}(j^{(i)}(\gamma_y))$ for any embedding $j$ and its critical points $\gamma_k$. So $(i,y)=_{LTY_1}(i+1,y'-1)$.

We have $F_1(0)=1, F_1(1)=4$, but $F_1(2)$ is very large. 

\medskip

\subsubsection*{Generalizing to $n>1$}

We can generalize the construction of $\vec{j}_1$ and $LTY_1$ further by the following way, note that for $n=1$ the construction below gives the $\vec{j}_1$ above.

Given an sequence of elementary embeddings $V_\lambda\rightarrow V_\lambda$, $(j_1,j_2,...)$, we denote $\mathrm{type}(j)=\{(i,i',e)|e(j_{i'})=j\}$, where $e$ is an expression of composition of applications and compositions, encoded as an integer.

\begin{definition}
    Assume $j:V_\lambda\rightarrow V_\lambda$ is $\Sigma^1_{2n}$. We will recursively choose increasing $\lambda$-sequences $\vec{j}_0,\vec{j}_1,...,\vec{j}_n$, ensuring for each $0\le k\le n$, $\vec{j}_{k}$ is computable, its witnessing expressions only depend on $k$ (not even $n$), and each element of $\vec{j}_{k}$ is $\Sigma^1_{2n-2k}$. Take $\vec{j_0}=(j,j,j,...)$.

    After choosing $\vec{j}_{k-1}$ for $0<k\le n$, we choose $\vec{j}_{k}$ in the following way. For each $i>0$, we define a sequence of embeddings $j_{i,i}, j_{i,i+1}, j_{i,i+2},\dots$. 
    We start with $j_{1,i}=(\vec{j}_{k-1})_i$ (the $i$th element of the previous sequence $\vec{j}_{k-1}$). Let $i>0$ and suppose that $j_{i,i},j_{i,i+1},j_{i,i+2},\dots$ were already defined. We choose $j_{i+1,i+1},j_{i+1,i+2},j_{i+1,i+3},\dots$ such that 
    \[
    \mathrm{type}(j_{i+1}(j_{i,i+1}),j_{i,i+1},j_{i+1}(j_{i,i+2}),j_{i,i+2},j_{i+1}(j_{i,i+3}),j_{i,i+3},\dots)
    \] 
    is equal to 
    \[
    \mathrm{type}(j_{i,i+1},j_{i+1,i+1},j_{i,i+2},j_{i+1,i+2},j_{i,i+3},j_{i+1,i+3},\dots).
    \] 
    This is possible because letting  
    \[ x=\mathrm{type}(j_{i+1}(j_{i,i+1}),j_{i,i+1},j_{i+1}(j_{i,i+2}),j_{i,i+2},j_{i+1}(j_{i,i+3}),j_{i,i+3},\dots)\in V_{\omega\cdot 2},
    \] 
    the proposition:
      \[
      \varphi(X):=\exists j'_1,j'_2,\dots(\mathrm{type}(X_1,j'_1,X_2,j'_2,\dots)=X_0, \text{ and each }j'_{i'}\text{ is }\Sigma^1_{2n-2k})
      \]
       is $\Sigma^1_{2n+2-2k}$, and $\varphi(j_{i+1}^+(A))$ is true, witnessed by $j'_{i'}=j_{i,i+i'}$ for each $i'>0$, where $A=\{(t,y,z)|1<t<\omega, j_{i,i+t}(y)=z\}\cup \{(0,x)\}$, and $X_t$ decodes this encoding. So we can take $j_{i+1,i+i'}$ as some witness $j'_{i'}$ of $\varphi(A)$. 

    Finally, $\vec{j}_k=(j_{1,1},j_{2,2},j_{3,3},\dots)$.

    From this construction, if $i\le i'$, then $j_{i,i'}\in j_{i,i'+1}$, and $j_{i,i'+1}\in j_{i+1,i'+1}$, and their witnessing expressions are both uniformly computable. So the recursive construction keeps going.

    We take $LTY_n=LTY_{\vec{j}_n}$. 
\end{definition}

%%%%%%%%%%%%%%%%%%%%%%%%%%%%%%%%%%%%%%%%%%
\section{Future Work}
\label{sec:future_work}

The author used a program to verify many blps generated from $p_{init}$. All of them are copyable. However, the proof of this phenomenon is unknown.

\begin{question}
    Is it true that for any $0<n<\omega$, during the calculation of $f(p_{init},n)$ from definition, no non-copyable blp will appear?
\end{question}

If a non-copyable blp really appears, then it seems that simply deleting the last row would weaken our estimate. This is the situation when estimating $\kappa^9_3$.

Let $\theta_0,\dots,\theta_8$ be $\kappa_0,\kappa_1,\kappa_2,\kappa_3,\kappa^7_2,\kappa^{11}_{3},\kappa^{10}_{3},j_{(11)}(\kappa^{11}_3),\kappa^9_3$. From~\cite{Dougherty}, we know that $j_{(11)}\overset{\underset{\theta_4}{}}{\equiv} j\circ j$, and $j_{(10)}(\theta_4)=\theta_8$. After applying $j_{(10)}$ on it, we get $j_{(10)}(j_{(11)})\overset{\underset{\theta_8}{}}{\equiv} j_{(11)}\circ j_{(11)}$.

Also from~\cite{Dougherty}, $j_{(11)}(j_{(11)}((\theta_1,\theta_2,\theta_3)))=(\theta_5,\theta_6,\theta_7)$, and 
 \[
 j_{(11)}:(2)\theta_0\mapsto\theta_1\mapsto\theta_2\mapsto\theta_3\mapsto\theta_4\mapsto\theta_5\mapsto\theta_6\mapsto\theta_7,
 \]
 So
  \[
 j_{(10)}(j_{(11)}):(4)\theta_0\mapsto\theta_1\mapsto\theta_2\mapsto\theta_3\mapsto\theta_4\mapsto\theta_5\mapsto\theta_6\mapsto\theta_7\mapsto\theta_8.
 \]

Ignoring the condition $|s_i|\le 2\cdot l_i+2$, and by setting $j_1=j_2=j, j_3=j_4=j_5=j_6=j_{(11)}, j_7=j_{(10)}(j_{(11)})(j_{(11)})$, these embeddings witness a bls with the following blp as in Figure~\ref{fig:k93_blp}:

\drawSequence{0.1}{0.5}{0.5}{0,1,2,-1,0,1,2,3,-1,0,1,2,3,4,-2,0,1,2,3,4,5,-2,0,1,2,3,4,5,6,-2,0,1,2,3,4,5,6,7,-2,4,5,6,7,8,-2}[A blp ignoring certain conditions for estimating $\kappa^9_3$.][fig:k93_blp]

the 3rd, 4th, 5th, and 6th rows are represented by the same embedding. However, there is some trouble in going further. It seems that a new idea is required, to give a good estimate of $\kappa^9_3$, yet alone $F(4)$.

\smallskip

For the LTY ordinals, nearly nothing is known. We propose a question:

\begin{question} 
    Is it true that, for any increasing $\lambda$-sequence $\vec{j}$, if $(i,i')$ has height $\alpha$ in $LTY_{\vec{j}}$, then the order in the Remark~\ref{lty'} has height $2^\alpha$?
\end{question}

It's easy to see that this is true when $\alpha$ is finite, but for infinite $\alpha$ the answer is still unclear.

\smallskip

The height of $LTY_n$ and the rank of $p_{init}$ are also unclear, even for $n=1$.

\begin{question}
    Calculate the height of $LTY_n$ for each $n$. Calculate $r(p_{init})$.
\end{question}

\end{document}